\newtheorem{theorem}{Theorem}[section]
\newtheorem{proposition}[theorem]{Proposition}
\newtheorem{lemma}[theorem]{Lemma}
\newtheorem{corollary}[theorem]{Corollary}
\theoremstyle{definition}
\newtheorem{remark}[theorem]{Remark}
\newtheorem{definition}[theorem]{Definition}
\newtheorem{example}[theorem]{Example}
\newcommand{\mb}[1]{\mathbb{#1}}
\newcommand{\norm}[1]{\left\| #1  \right\|}
\newcommand{\Z}{\mb Z}
\newcommand{\N}{\mb N}
\newcommand{\C}{\mb C}
\newcommand{\V}{\mathcal{V}}
\newcommand{\E}{\mathcal{E}}
\newcommand{\CP}{\mathcal{O}}
\newcommand{\calL}{\mathcal{L}}
\newcommand{\K}{\mathcal{K}}
\DeclareMathOperator{\im}{Im}
\DeclareMathOperator{\supp}{supp}
\DeclareMathOperator{\tr}{tr}
\DeclareMathOperator{\End}{End}
\DeclareMathOperator{\orb}{Orb}
\DeclareMathOperator{\dimnuc}{dim_{nuc}}
\DeclareMathOperator{\dr}{dr}
\DeclareMathOperator{\fin}{f}
\DeclareMathOperator{\gl}{gl}
\title[Cuntz--Pimsner algebras of partial automorphisms twisted by vector bundles II]{Cuntz--Pimsner algebras of partial automorphisms twisted by vector bundles II: Nuclear dimension}
\author{Aaron Kettner}
\address{Department of Abstract Analysis\\ Institute of Mathematics, Czech Academy of Sciences, \v{Z}itn\'a 25, 115 67 Prague 1, Czech Republic}
\email{kettner@math.cas.cz}
\date{\today}
\subjclass[2020]{37A55, 46L35, 46L08}
\thanks{Funded by GA\v{C}R project GF22-07833K and \mbox{RVO: 67985840}. Part of this work was carried out while funded by GA\v{C}R project 20-17488Y. This work has been supported by Charles University Research Centre
program No. UNCE/24/SCI/022, and by the Charles University project SVV-2023-260721. The author is currently funded by GA\v{C}R project G25-15403K.}
\keywords{Partial automorphisms, $\mathrm{C}^*$-correspondences, classification of nuclear \mbox{$\mathrm{C}^{*}$-algebras}}
\begin{document}
\maketitle
\begin{abstract}
We show that Cuntz--Pimsner algebras associated to partial automorphisms twisted by vector bundles are classifiable in the sense of the Elliott program whenever the action is minimal and the base space is compact, infinite and has finite covering dimension. We also investigate the tracial state space of our algebras, and show that traces are in bijection to certain conformal measures. This generalizes results about partial crossed products by Geffen and complements results about the $C^*$-algebras associated to homeomorphisms twisted by vector bundles of Adamo, Archey, Forough, Georgescu, Jeong, Strung and Viola. We use our findings to generalize various existing statements about orbit-breaking subalgebras. 
\end{abstract}

\tableofcontents

\section{Introduction}

The classification program for $C^*$-algebras was initiated by George Elliott in the 1990's. It has had tremendous success in recent years, cumulating in the theorem that $C^*$-algebras which are unital, separable, simple, infinite-dimensional, have finite nuclear dimension and satisfy the UCT, are completely classified by the so-called Elliott invariant. This is an invariant consisting of $K$-theoretical as well as tracial data, see for example \cite{CETWW, BBSTWW:2Col, EllGonLinNiu:ClaFinDecRan, GongLinNiu:ZClass, GongLinNiu:ZClass2, TWW, CarGabeSchafTikWhi2023}). We call such $C^*$-algebras \emph{classifiable}. An introduction to the classification program can be found in \cite{Strung:2021}. 

To complement the abstract classification theorem it is necessary to find a wide variety of ``naturally-occurring'' classifiable $C^*$-algebras. In fact the theory of $C^*$-algebras, starting with the seminal works of Murray and von Neumann, has a long and fruitful history of constructing explicit examples out of other mathematical objects such as dynamical systems or graphs. 

 In \cite{partI}, the author constructed $C^*$-algebras from topological as well as dynamical data, namely from a locally compact Hausdorff space $X$, a partial automorphism $\theta$ on $X$ (which is the same thing as a partial action of the integers on $X$ in the sense of \cite{Exel:2017}), and a vector bundle over the domain of $\theta$. This generalizes a construction from \cite{adamo2023} which uses global actions (that is, a homeomorphism $X \to X)$ instead of partial actions. Roughly speaking, we build a $C^*$-correspondence over $C_0(X)$ from the vector bundle and the partial automorphism. We then consider the associated Cuntz--Pimsner algebra as introduced by Pimsner \cite{Pimsner:1997} and further developed by Katsura \cite{Katsura:2004}.
 
 As for any class of examples of $C^*$-algebras, a natural question presents itself: 
 \begin{enumerate}
     \item When are the $C^*$-algebras in this class classifiable?
 \end{enumerate}
If the answer is positive, then we have the follow-up question:
 \begin{enumerate}[resume]
\item What is the range of the Elliott invariant for this class?
\end{enumerate}
It can be desirable to exhaust the range of the Elliott invariant by a particular class of examples. In this case, the classification theorem implies that every classifiable $C^*$-algebra can be obtained via such a model. This can provide further tools with which to study  these $C^*$-algebras, leading to new insights into their structure. For example, new models can uncover new Cartan subalgebras, or the existence of group actions and other symmetries. A celebrated result in this direction was achieved by Katsura in \cite{Katsura2007}, where it was shown that every Kirchberg algebra, meaning every simple separable nuclear purely infinite $C^*$-algebra, can be realised as a topological graph $C^*$-algebra. For similar constructions see for example \cite{Spielberg:2007, Wu:2025}.

While the class of $C^*$-algebras that is the focus of this paper---the $C^*$-algebras associated to partial automorphisms twisted by vector bundles---have non-trivial intersection with the class of Katsura's topological graph $C^*$-algebras, they are in many ways easier to work with.  As Cuntz--Pimsner algebras, they admit a canonical gauge action. In \cite{partI}, the author showed that there is a particularly satisfying description of the fixed point algebra: it can be described as the section algebra of a continuous field of either matrix or UHF algebras, depending on the partial action (see \cite[Theorem 3.28]{partI}). This allows for greater control over the conditional expectation onto the fixed point algebra, an indispensable technical tool in many proofs. For example, it was used in \cite{partI} to describe the closed two-sided ideals of a $C^*$-algebra associated to a partial automorphism twisted by a vector bundle. In Section \ref{sect:traces cp algebra} it also plays a crucial role in determining the tracial state space when the vector bundle has rank greater than one. This detailed understanding of the fixed point algebra will undoubtedly prove to be useful in many other situations as well. 

The construction also allows for many interesting stably finite $C^*$-algebras. From the point of view of the classification program, stably finite and purely infinite $C^*$-algebras have historically been treated separately, often using different techniques. The classification of Kirchberg algebras was already  completed in the early 1990's \cite{phillips:2000, kirchbergphillips:2000, kirchberg:2022}, well before the analogous results for stably finite $C^*$-algebras.  For this reason, the $C^*$-algebras presented here should be of great interest for the classification program, particularly in light of Questions (1) and (2) above. This paper is mainly concerned with answering Question (1) by computing bounds for the nuclear dimension. Sufficient conditions for finite nuclear dimension lead to the main result, Theorem \ref{thm:classifiable}:  A Cuntz--Pimsner algebra associated to a partial automorphism on $X$ twisted by a vector bundle, is classifiable whenever $X$ is an infinite compact second countable Hausdorff space with finite covering dimension and the partial automorphism is minimal. 

Why do we consider partial actions, instead of staying within the global framework of \cite{adamo2023}? A key motivation for this is provided by the results of Deeley, Putnam and Strung in \cite{deeleyputnamstrung:2018jiangsu, deeleyputnamstrung:2024}. They explored the range of the Elliott invariant of crossed products by minimal homeomorphisms and their orbit-breaking subalgebras. To break the orbit of a minimal homeomorphism means to choose a closed subset of the space which meets every orbit at most once. Then two points are equivalent under the new, orbit-breaking relation, whenever they can be connected via an orbit which does not meet the closed subset. The orbit-breaking subalgebra of the original crossed product is the groupoid $C^*$-algebra associated to the orbit-breaking equivalence relation. This is a special case of a partial crossed product by the integers, and so fits into the framework of the paper. As pointed out in \cite{deeleyputnamstrung:2018jiangsu, deeleyputnamstrung:2024}, all AF algebras as well as the Jiang--Su algebra can be realized as orbit-breaking subalgebras. This is a remarkable result because it allows for a dynamical presentation of both AF algebras and the Jiang--Su algebra (among other examples), despite the fact that none of these algebras can be obtained from crossed products by minimal homeomorphisms, due to $K$-theoretical obstructions. Twisting the partial automorphism associated to an orbit-breaking algebra by a vector bundle allows for the possibility of realizing even further invariants than those in \cite{deeleyputnamstrung:2024}.

 Partial actions and their crossed products are discussed in detail in \cite{Exel:2017}, and their classifiability under appropriate hypothesis was shown in \cite{Geffen2021}. Together with the results of \cite{deeleyputnamstrung:2018jiangsu, deeleyputnamstrung:2024} this suggests that when trying to construct as many classifiable $C^*$-algebras as possible, while staying in a dynamical framework, we ought to consider partial actions.

Crossed products and their orbit-breaking subalgebras from \cite{deeleyputnamstrung:2024} fall into the stably finite category. Purely infinite classifiable $C^*$-algebras are exactly the unital Kirchberg algebras which were constructed in \cite{Katsura2007} as topological graph $C^*$-algebras. In \cite{adamo2023} it is shown that in the case of a homeomorphism (rather that the partial automorphisms considered here), the resulting Cuntz--Pimsner algebras can either be stably finite or purely infinite depending on the rank of the vector bundle \cite[Corollary 5.6]{adamo2023}, thus placing both classes on an equal footing.  More precisely, they show that their $C^*$-algebras have tracial states if and only if the vector bundle is a line bundle, that is, all its fibers are one-dimensional. This implies that the algebras are stably finite in the line bundle case, and otherwise are purely infinite. 

One might expect a similar dichotomy when generalizing to the case of partial automorphisms twisted by vector bundles, as considered in \cite{partI} as well as this paper. Surprisingly, there are many examples associated to higher rank bundles for which the Cuntz--Pimsner algebras are stably finite. Indeed, if the vector bundle has rank $d\geq 1$ and the action is free, then tracial states are in bijection with so-called $d$-conformal measures on $X$, see Proposition \ref{prop:traces cp algebra}. A $1$-conformal measure is simply an invariant measure for the partial action. If the action is global and $d>1$, then one can easily see that there do not exist any $d$-conformal measures, thus recovering \cite[Proposition 4.4]{adamo2023}. However, there are many partial actions for which such measures do exist even for $d>1$. Notably, this occurs if the partial action arises from breaking the orbit of a global action, as described above. We are thus able to prove that such orbit-breaking subalgebras always possess traces, and therefore are always stably finite. This is quite remarkable since it was expected that, as for line bundles  \cite[Theorem 6.14]{adamo2023},  the orbit-breaking subalgebra would be a large subalgebra (in the sense of Phillips \cite{phillips:2014}) inside the original Cuntz--Pimsner algebra of the corresponding global action. However, under the assumption that the dimension of the unerlying space is finite, the ``global algebra'' is purely infinite while the orbit-breaking subalgebra is stably finite, which rules out the possibility that it is a large subalgebra.

Having established sufficient conditions for classification, future work aims to address Question (2), namely determining the range of the Elliott invariant. This will involve concrete examples of spaces, actions and vector bundles for which the Elliott invariant of the resulting algebras can be calculated explicitly.  

\subsection{Summary of the paper}
Section \ref{sect:preliminaries} contains the necessary preliminaries. We briefly introduce the theory of partial actions as well as the Cuntz--Pimsner construction. We recall some results from \cite{partI} concerning the structure of the fixed point algebra under the canonical gauge action, and say a few words about the Elliott classification program.

In Section \ref{sect: partial actions}  we prove some apparently new results about invariant measures and minimality of partial $\Z$-actions. We introduce conformal measures, and give necessary and sufficient conditions for their existence.

In Section \ref{sect:traces cp algebra} we show that if the action is free, then tracial states on our algebras are in bijection with conformal measures. 
 
Section  \ref{sect:classifiability of the Cuntz--Pimsner algebra} contains the main result of this paper, namely Theorem \ref{thm:classifiable}. This theorem shows classifiability of these Cuntz--Pimsner algebras under certain conditions, the most important of which are finite covering dimension of the base space, as well as minimality of the action. Since simplicity holds in this setting by \cite[Corollary 4.17]{partI}, and other properties such as nuclearity and the UCT follow from the general theory of Cuntz--Pimsner algebras, the bulk of the section is devoted to showing finite nuclear dimension. We use methods from \cite{Hirshberg2017} to show that in the special case that the partial automorphism only has orbits of a fixed size, the Cuntz--Pimsner algebra is homogeneous and has decomposition rank bounded by the dimension of the space. We then apply methods from \cite{Geffen2021} to show finite nuclear dimension. The main theorem follows. 

In the final section, we turn to orbit-breaking subalgebras. We first investigate the circumstances under which restricting partial automorphisms preserves minimality and the space of invariant measures. It turns out, see Proposition \ref{thm:blabla}, that on a compact space this is exactly the case when the set one takes out meets every orbit at most once. Since orbit-breaking subalgebras are special cases of our Cuntz--Pimsner algebras associated to partial automorphisms, several statements from \cite{phillips:2007} and \cite{adamo2023} can easily be generalized using the main results. 

\textit{Acknowledgements:} I would like to thank my advisor Karen Strung for suggesting this project, and for many useful and stimulating discussions. I would also like to thank Noa Bihlmaier and Rainer Nagel for helpful discussions regarding invariant measures of (partial) actions.

\section{Preliminaries}\label{sect:preliminaries}
We start this section by defining partial automorphisms. We then recall the theory of Hilbert modules and $C^*$-correspondences as well as the Cuntz--Pimsner construction. We introduce the $C^*$-correspondences that we will be working with, which are constructed from a partial automorphism as well as a vector bundle. Such correspondences were first considered in \cite{partI}, and in \cite{adamo2023} for global instead of partial actions. Finally we give a quick introduction into the classification program for $C^*$-algebras. 

\subsection{Partial actions}

Throughout this as well as the next sections $X$ will be a locally compact Hausdorff space unless explicitly stated otherwise. 
\begin{definition}
A \emph{partial automorphism on $X$} is a homeomorphism $\theta:U\to V$ between two open subsets $U$ and $V$ of $X$. For every $n\in\Z$, the maximal domain of the $n$-th power of $\theta$ will be denoted by $D_{-n}$, and its codomain by $D_n$. That is, $\theta^n$ maps $D_{-n}$ to $D_n$.
\end{definition}
 By definition, we have $D_{-1}=U$ and $D_1=V$. Furthermore the equalities 
\[D_n=\theta^{n-1}(D_{1-n}\cap D_1)\quad\text{and}\quad D_{-n}=\theta^{1-n}(D_{n-1}\cap D_{-1})\]
hold for all $n\geq 1$. They are special cases of the identity 
\[\theta^n(D_{-n}\cap D_{k})=D_n\cap D_{k+n}\]
which holds for all integers $n$ and $k$.
\begin{remark}
    Partial automorphisms fit into the wider framework of partial actions of groups, which in the context of $C^*$-algebras were introduced in \cite{Exel:1994, McClanahan:1995, Exel:1998}. In fact, a partial automorphism on $X$ is precisely a partial action on $X$ by the integers. There is a beautiful interplay between partial actions, inverse semigroups and Fell bundles, which in large parts was developed by Ruy Exel and collaborators (see \cite{Exel:2017}). 
\end{remark}

\begin{definition}
    Let $\theta$ be a partial automorphism on a locally compact Hausdorff space $X$. The \emph{orbit} of a point $x\in X$ is the set $\orb(x)\coloneqq\{\theta^n(x):n\in\Z\;\text{such that }x\in D_{-n}\}$. 

A subset $Y$ of $X$ is called \emph{$\theta$-invariant} if 
\begin{align*}
\theta(Y\cap U)\subset Y\quad\text{and}\quad\theta^{-1}(Y\cap V)\subset Y.
\end{align*} 
We call $\theta$ \emph{minimal} if the only closed (equivalently, open) $\theta$-invariant subsets are $\emptyset$ and $X$. We call $\theta$ \emph{free} if for all $x\in X$ the equality $\theta^n(x)=x$ implies $n=0$. 
\end{definition}

\subsection{$C^*$-correspondences and Cuntz--Pimsner algebras} We briefly review the necessary background on Hilbert modules, $C^*$-correspondences and Cuntz--Pimsner algebras. For a more detailed account see \cite{Lance:1995} and \cite{Katsura:2004}. 

Let $A$ be a $C^*$-algebra. Then a \emph{(right) Hilbert $A$-module} is a right $A$-module $\E$ equipped with an $A$-valued inner product $\langle\cdot,\cdot\rangle$ such that $\E$ is complete with respect to the norm $\norm{\xi}=\sqrt{\norm{\langle\xi,\xi,\rangle}_A}$. We write $\calL(\E)$ for the $C^*$-algebra of adjointable linear operators on $\E$. The $C^*$-algebra of compact operators $\K(\E)$ is the subalgebra of $\calL(\E)$ obtained as the closed linear span of the set of rank-one operators $\theta_{\xi,\eta}$ with $\xi,\eta\in\E$. Here $\theta_{\xi,\eta}$ is the adjointable operator defined by 
\[\theta_{\xi,\eta}(\zeta)=\xi\langle\eta,\zeta\rangle\qquad\text{for }\xi,\eta,\zeta\in\E.\]
To avoid confusion with a partial automorphism $\theta$ we will sometimes write $\xi\eta^*$ instead of $\theta_{\xi,\eta}$.

A \emph{$B$-$A$ $C^*$-correspondence}, where $A$ and $B$ are $C^*$-algebras, is a right Hilbert $A$-module together with a $\ast$-homomorphism $\varphi_\E$ from $B$ to $\calL(\E)$. We call $\varphi_\E$ the \emph{left action} of the $C^*$-correspondence. We often omit $\varphi_\E$ and write $b\xi$ instead of $\varphi_\E(b)\xi$, for $b\in B$ and $\xi\in\E$. If $A$ and $B$ coincide then we say that $\E$ is a $C^*$-correspondence \emph{over} $A$.

Given a $C^*$-correspondence $\E$ over $A$, we can form a $C^*$-algebra $\CP(\E)$ called the \emph{Cuntz--Pimsner algebra of $\E$}. This construction was pioneered in \cite{Pimsner:1997} and generalized in \cite{Katsura:2004}. It is defined via a universal property for so-called covariant representations of $\E$, and can explicitly be constructed using the Fock space associated to $\E$. The details can be found in \cite{Katsura:2004}. The Cuntz--Pimsner algebra comes with the universal covariant representation $(\pi_u,t_u)$, where $\pi_u$ is an injective $\ast$-homomorphism from $A$ to $\CP(\E)$ and $t_u$ is an injective linear map from $\E$ to $\CP(\E)$. They fulfill the relations
\begin{align*}
\pi_u(a)t_u(\xi)\pi_u(b)=t_u(a\xi b)\quad \text{and}\quad t_u(\xi)^*t_u(\eta)=\pi_a(\langle\xi,\eta\rangle)
\end{align*}
for all $a,b\in A$ and $\xi,\eta\in\E$. We obtain a $\ast$-homomorphism $\psi_t$ from $\K(\E)$ into the Cuntz-Pimsner algebra, given on rank-one operators by sending $\theta_{\xi,\eta}$ to $t_u(\xi)t_u(\eta)^*$, for $\xi,\eta\in\E$. The covariance condition $\psi_t(\varphi_\E(a))=\pi_u(a)$ holds for all $a\in J_\E$, where 
\[J_\E\coloneqq \varphi_\E^{-1}(\K(\E))\cap (\ker\varphi_\E)^\perp.\]

  We will often omit the universal covariant representation, and regard $A$ and $\E$ as subsets of the Cuntz--Pimsner algebra. For example, we might write $\xi\eta^*$ instead of $t_u(\xi)t_u(\eta)^*$.

 There is a strongly continuous action of the circle on $\CP(\E)$ called the \emph{gauge action}, usually denoted by $\gamma$. It induces a topological $\Z$-grading in the sense of \cite[Definition 19.2]{Exel:2017} on $\CP(\E)$ with the $n$-th spectral subspace 
 \[\CP(\E)^n=\{a\in\CP(\E)|\forall z\in\mathbb{T}: \gamma_z(a)=z^na\},\] see \cite{Katsura:2004}. Of particular importance is $\CP(\E)^0$, the zeroth spectral subspace. It consists of all such elements of $\CP(\E)$ that are fixed under the gauge action, meaning $\gamma_z(a)=a$ for all $z\in\mathbb{T}$. For this reason we call $\CP(\E)^0$ the \emph{fixed point algebra}. Since the grading is topological, there exists a faithful conditional expectation from $\CP(\E)$ onto $\CP(\E)^0$, which we denote by $\Phi$. 

Let $\E^{\otimes n}$ denote the $n$-fold balanced tensor product of $\E$ with itself. For any $n\in\N$ there exists a subalgebra $B_n$ of $\CP(\E)^0$ isomorphic to $\K(\E^{\otimes n})$, obtained by setting
\[B_n\coloneqq \psi_{t^{\otimes n}}(\K(\E^{\otimes n})).\]
 For $n=0$ one defines $B_0$ to be equal to $\pi_u(A)$. Furthermore, we define
 \[B_{[0,n]}\coloneqq B_0+...+B_n.\]
Then $\{B_{[0,n]}\}_{n\geq 0}$ is an increasing sequence of subalgebras exhausting $\CP(\E)^0$, meaning that the closure of $\bigcup_{n\geq 0}B_{[0,n]}$ is equal to $\CP(\E)^0$ (see \cite[Proposition 5.7]{Katsura:2004}). In particular, we have
\[\CP(\E)^0=\lim\limits_{\to}B_{[0,n]}.\]

\subsection{Twisted partial automorphisms}\label{sect:twisted partial autos}
Let $X$ be a locally compact second countable Hausdorff space. We now define the $C^*$-correspondences we will be working with. The input data of the construction, which was first carried out in \cite{partI} and is inspired by a similar construction in \cite{adamo2023}, are a partial automorphism on $X$, as well as a vector bundle over the domain of the partial automorphism. 

 Let $\theta:U\to V$ be a partial automorphism on $X$ as in Section \ref{sect: partial actions}. Let $\V$ be a vector bundle over $U$. That is, $\V$ is a triple $[E,p,U]$ where $E$ is a topological space called the \emph{total space}, and $U$ is called the \emph{base space}. The \emph{bundle projection} $p$ is a continuous map from $E$ to $U$. For every $x\in U$, the fiber $p^{-1}(x)$ is a complex vector space. We always assume our vector bundles to be locally trivial, meaning that for every $x\in U$ there exists a neighborhood $W\subset U$ of $x$ such that $p^{-1}(W)$ is isomorphic, as vector bundles over $W$, to the trivial bundle $W\times\C^d$ for some $d\in\N$. The number $d$ is called the \emph{rank} of the bundle over $x$. It can depend on $x$, but is constant on connected components of $U$. For an introduction to the theory of vector bundles see \cite{Husemoller:1993}.

Let $\beta$ be a metric on $\V$, which exists and is unique up to isometry because $X$ is paracompact (see \cite[Chapter 3]{Husemoller:1993} and \cite[Theorem 2.5]{partI}). We can then consider $\Gamma_0(\V)$, the space of continuous sections of $\V$ vanishing at infinity with respect to $\beta$. There is a natural right Hilbert $C_0(X)$-module structure on $\Gamma_0(\V)$, with right multiplication and inner product given by 
\[\xi\cdot f(x)\coloneqq \xi(x)f(x),\qquad \langle\xi,\eta\rangle(x)=\beta(\xi(x),\eta(x)),\]
for $f\in C_0(X)$ and $\xi,\eta\in\Gamma_0(\V)$. 
Furthermore, we can define a $\ast$-homomorphism $\varphi$ from $C_0(X)$ into $\calL(\Gamma_0(\V))$ by setting
\[\varphi(f)\xi\coloneqq \xi(f\circ\theta)\]
for $f\in C_0(X)$ and $\xi\in\Gamma_0(\V)$. That is, left multiplication with $f\in C_0(X)$ is given by right multiplication with $f\circ\theta$. Note that the expression $f\circ\theta$ is not well defined on its own, unless $f$ vanishes outside of $V$. However, $\xi(f\circ\theta)$ is well defined since $\xi$ vanishes outside of $U$. 

We thus obtain a $C^*$-correspondence over $C_0(X)$, which we denote by $\Gamma_0(\V,\theta)$.


\subsection{Fibered structure of the fixed point algebra}\label{sect:fibered structure}

Let $X$ be a second countable locally compact Hausdorff space, $U$ and $V$ open subsets of $X$, and $\theta:U\to V$ a partial automorphism. Let $\V$ be a rank $d$ vector bundle over $U$. We write $\E\coloneqq\Gamma_0(\V,\theta)$.

Recall that the fixed point algebra $\CP(\E)^0$ contains all such elements of $\CP(\E)$ that are fixed under the canonical gauge action. It was shown in \cite[Proposition 3.22]{partI} that $\pi_u(C_0(X))$ is contained in the center of $\CP(\E)^0$, and that this turns $\CP(\E)^0$ into a \emph{$C_0(X)$-algebra}. A $C_0(X)$-algebra is a $C^*$-algebra $A$ together with a $\ast$-homomorphism $\phi$ from $C_0(X)$ into $\mathcal{Z}(M(A))$, the center of the multiplier algebra of $A$, such that $\phi(C_0(X))A$ is dense in $A$. For $x\in X$ the quotient of $A$ by the ideal $C_0(X\backslash\{x\})A$ is called the \emph{fiber} over $x$. Any $C_0(X)$-algebra is the section algebra of an upper-semicontinuous $C^*$-bundle over $X$, see \cite{Nilsen:1996}. 

We now describe the $C_0(X)$-structure. The $C^*$-algebra $\K(\E)$ of compact operators is a $C_0(X)$-algebra, where the map from $C_0(X)$ into the center of $M(\K(\E))=\calL(\E)$ is given by sending $f\in C_0(X)$ to $\varphi_\E(f)$. That $\varphi_\E$ indeed maps into the center of $\calL(\E)$ can be verified on rank-one operators. The fiber of $\K(\E)$ at a point $x\in V$ is given by $M_d$, and we have
\[\xi\eta^*(x)=\xi(\theta^{-1}(x))\eta(\theta^{-1}(x))^*\]
for $\xi,\eta\in\E$. If $x$ does not lie in $V$, then the fiber of $\K(\E)$ over $x$ is the trivial $C^*$-algebra $\{0\}$. Even though it seems more general, we obtain a $C_0(X)$-structure on $\K(\E^{\otimes n})$ for any $n\in\N$ as a special case. 

The $C_0(X)$-structure on $\K(\E^{\otimes n})$ yields a $C_0(X)$-structure on $B_{[0,n]}$ for all $n\in\N$, as well as on $\CP(\E)^0$. The details can be found in \cite[Section 3.5]{partI}. The fiber of $B_{[0,n]}$ over a point $x\in D_n$ is given by $M_{d^n}$. If $x\in X$ lies in $D_j$ but not in $D_{j+1}$, then the fiber over $x$ is $M_{d^j}$, for all $j=0,...,n-1$. The fibers of the fixed point algebra now follow from the fact that it is the inductive limit of the sequence $\{B_{[0,n]}\}_{n\geq 0}$: If a point $x$ lies in $D_j$ but not in $D_{j+1}$, then the fiber over $x$ is $M_{d^j}$, for all $j\geq 0$. If $x$ lies in the intersection $\bigcap_{j\geq 0}D_j$, then the fiber over $x$ is the UHF algebra $M_{d^\infty}$.

\subsection{The classification program for $C^*$-algebras}\label{sect:classification program}

Decades of intensive research by many mathematicians have led to a classifiation theorem for $C^*$-algebras that are unital, separable, simple, infinite dimensional, have finite nuclear dimension and satisfy the universal coefficient theorem (UCT). Such $C^*$-algebras are called \emph{classifiable}. For an overview see \cite{Strung:2021} and \cite{rordam:2002}. The theorem states that classifiable $C^*$-algebras are, up to isomorphism, classified by an invariant known as the Elliott invariant, which consists of $K$-theoretic as well as tracial data. 

The UCT is a condition relating $K$-theory and Kasparov's $KK$-theory. It is one of the major open questions in the field whether every separable nuclear $C^*$-algebra satisfies the UCT. In particular, the UCT is almost always fullfilled in examples, and checking it will not be an issue for us. 

This is very different for the other distinct property that classifiable $C^*$-algebras have, namely finite nuclear dimension. For the convenience of the reader, we recall the definition. 

\begin{definition}[{\cite{winterzacharias:2010}}]
A $C^*$-algebra $A$ has \emph{nuclear dimension} at most $n$, namely $\dim_{\mathrm{nuc}}(A)\leq n$, if there exists a net $(F_\lambda,\psi_\lambda,\varphi_\lambda)_{\lambda\in\Lambda}$ consisting of: $F_\lambda$ finite dimensional $C^*$-algebras, and $\psi_\lambda:A\to F_\lambda$, $\varphi_\lambda:F_\lambda\to A$ completely positive maps, satisfying: 
\begin{enumerate}
\item $\varphi_\lambda\circ\psi_\lambda(a)\to a$ uniformly on finite subsets of $A$;
\item $\psi_\lambda$ are contractive;
\item $F_\lambda$ decomposes as $F_\lambda=F_\lambda^{(0)}\oplus...\oplus F_\lambda^{(n)}$ such that the restriction of $\varphi_\lambda$ to each $F_\lambda^{(i)}$, $i\in\{0,...,n\}$, is a contractive order zero map.
\end{enumerate}
If one can moreover arrange that the maps $\varphi_\lambda$ are contractive, then we say that the \emph{decomposition rank} of $A$ is at most $n$, and write $\mathrm{dr}(A)\leq n$.
\end{definition}

We have chosen to include finite nuclear dimension rather than $\mathcal{Z}$-stability, that is, tensorial absorption of the Jiang-Su algebra $\mathcal{Z}$, because we will show classifiability of our $C^*$-algebras by finding an upper bound on the nuclear dimension.

A unital $C^*$-algebra $A$ is called \emph{finite} if $1_A$ is a finite projection, and \emph{stably finite} if every matrix amplification $M_n(A)$ is finite. A simple $C^*$-algebra $A$ is called \emph{purely infinite} if every hereditary subalgebra of $A$ contains an infinite projection. There is a dichotomy: A classifiable $C^*$-algebra is either purely infinite or stably finite. The latter is the case if and only if it has tracial states. It should be mentioned that the classification of purely infinite classifiable $C^*$-algebras, also called Kirchberg algebras, has already been completed in the early 1990's \cite{phillips:2000, kirchbergphillips:2000, kirchberg:2022}. 

\section{Partial automorphisms}\label{sect: partial actions}
This section contains basic results about minimality and invariant measures of partial automorphisms. It also treats what we call conformal measures, which will play in important role in Section \ref{sect:tracial state space}.
\subsection{Decomposing the space into orbit types}\label{sect:orbit types}

Let $\theta:U\to V$ be a partial automorphism on a locally compact Hausdorff space $X$.
For any $x\in X$ exactly one of the following holds. 
\begin{enumerate}
\item Both $\theta$ and $\theta^{-1}$ can only be applied finitely many times to $x$. That is, we have $x\notin D_{-M}$  and $x\notin D_N$ for some $M,N\in\N$.
\item We can apply $\theta$ infinitely often to $x$, but $\theta^{-1}$ only finitely many times. That is, we have $x\in D_{-n}$ for all $n\in\N$ and $x\notin D_N$ for some $N\in\N$. 
\item We can apply $\theta^{-1}$ infinitely often to $x$, but $\theta$ only finitely many times. That is, we have $x\in D_{n}$ for all $n\in\N$ and $x\notin D_{-N}$ for some $N\in\N$.
\item We can apply both $\theta$ and $\theta^{-1}$ infinitely often to $x$. That is, we have $x\in D_n$ for all $n\in\Z$.
\end{enumerate}
 Denote the set of all points fulfilling (1), (2), (3) or (4) by $D_{\fin}$, $D_+$, $D_-$ and $D_{\gl}$, respectively. Note that $D_{\fin}$ includes the set $X\backslash(U\cup V)$ of all points on which $\theta$ does not act at all. Also note that $D_{\gl}$ contains points with periodic orbits, meaning that points in $D_{\gl}$ do not necessarily have an infinite orbit. 
Explicitly, we have
\begin{align*}
D_{\fin}&=\bigcup_{n\geq0}D_n\backslash D_{n+1}\cap\bigcup_{n\geq0}D_{-n}\backslash D_{-n-1},\\
D_+&=\left(\bigcap_{n>0}D_{-n}\right)\backslash\left(\bigcap_{n\in\Z}D_n\right) ,\\
D_-&=\left(\bigcap_{n>0}D_n\right)\backslash \left(\bigcap_{n\in\Z}D_n\right),\\
D_{\gl}&=\bigcap_{n\in\Z}D_n.
\end{align*} 
This implies that $D_{\fin}$, $D_+$, $D_-$ and $D_{\gl}$ are all measurable. They are mutually disjoint and their union is equal to $X$.

\subsection{Invariant measures and minimality}\label{sec:invariant measures}

Let $X$ be a locally compact Hausdorff space, $U$ and $V$ open subsets of $X$, and $\theta:U\to V$ a partial automorphism on $X$. We do not require $X$ to be infinite. We write $M^1(X)$ for the compact convex set of regular Borel probability measures on $X$. A measure $\mu\in M^1(X)$ is called \emph{$\theta$-invariant} if $\mu(\theta^{-1}(Y))=\mu(Y)$ holds for all measurable sets $Y\subset V$. The set of all such measures is denoted by $M^1_\theta(X)$. 

\begin{proposition}\label{prop: no measures}
If $\mu\in M_\theta^1(X)$ then $\mu(D_+)=\mu(D_-)=0$.
\end{proposition}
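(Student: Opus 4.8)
The plan is to show $\mu(D_+) = 0$; the argument for $\mu(D_-)$ is symmetric (apply the result to $\theta^{-1}$, whose corresponding ``$D_+$'' is the original $D_-$). Recall $D_+ = \left(\bigcap_{n>0} D_{-n}\right) \setminus \left(\bigcap_{n\in\Z} D_n\right)$, i.e.\ the points to which $\theta$ can be applied infinitely often but $\theta^{-1}$ only finitely often. The key idea is to stratify $D_+$ according to how many times $\theta^{-1}$ can be applied. For $k \geq 0$, let $A_k = \{x \in D_+ : x \in D_k,\ x \notin D_{k+1}\}$, the set of points in $D_+$ whose backward orbit has length exactly $k$. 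Since every point of $D_+$ fails to lie in some $D_N$, we have $D_+ = \bigsqcup_{k\geq 0} A_k$, a countable disjoint union of measurable sets, so it suffices to show $\mu(A_k) = 0$ for each $k$.

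First I would show the sets $\theta^{j}(A_0)$, for $j \geq 0$, are pairwise disjoint. Indeed, $A_0$ consists of points not in $D_1 = V$, so these are the ``sources'' of forward orbits living inside $D_+$; a point $x \in A_0$ has a well-defined forward orbit $x, \theta(x), \theta^2(x), \dots$ (all defined since $x \in \bigcap_{n>0} D_{-n}$), and distinct sources give disjoint forward orbits because a point $\theta^j(x)$ determines its source as the unique point in its orbit lying outside $V$ (using that $\theta$ is injective where defined). Next, $\theta$-invariance of $\mu$ forces $\mu(\theta^j(A_0)) = \mu(A_0)$ for all $j \geq 0$: one checks inductively that $\mu(\theta(W)) = \mu(W)$ for any measurable $W \subseteq U$ with $\theta(W)$ measurable, directly from the defining identity $\mu(\theta^{-1}(Y)) = \mu(Y)$ applied with $Y = \theta(W)$. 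Then $\sum_{j\geq 0} \mu(\theta^j(A_0)) = \sum_{j\geq 0}\mu(A_0) \leq \mu(X) = 1$ forces $\mu(A_0) = 0$. Finally, for general $k$, note $A_k = \theta^k(A_0')$ where $A_0' = \theta^{-k}(A_k) \subseteq A_0$ (a point in $A_k$ can be pushed back exactly $k$ times, landing in a source), so $\mu(A_k) = \mu(A_0') \leq \mu(A_0) = 0$ by the same invariance argument. Summing over $k$ gives $\mu(D_+) = 0$.

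The main obstacle I anticipate is bookkeeping around measurability and well-definedness of the pushforwards $\theta^j(A_0)$: one must verify these sets are Borel (images of Borel sets under the homeomorphisms $\theta^j\colon D_{-j} \to D_j$ restricted to appropriate domains, which is fine since $\theta$ is a homeomorphism between open sets), and carefully justify $\mu(\theta(W)) = \mu(W)$ noting that $\theta^{-1}(\theta(W)) = W$ when $W \subseteq U$. A secondary subtlety is the disjointness claim: I should make precise that for $x \in A_0$, the forward iterates $\theta^j(x)$ never re-enter $A_0$ (they lie in $V$ for $j \geq 1$) and never coincide for different sources, which is where injectivity of $\theta$ and the ``exactly one orbit type'' decomposition from Section~\ref{sect:orbit types} do the real work. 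None of these steps is deep, but they need to be assembled cleanly.
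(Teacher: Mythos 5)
Your proposal is correct and takes essentially the same approach as the paper: your source set $A_0$ is exactly the paper's $Y = D_+\setminus D_1$, your stratification $D_+=\bigsqcup_{k\ge 0}A_k$ coincides with the paper's decomposition $D_+=\bigsqcup_{n\ge 0}\theta^n(Y)$ (indeed $A_k=\theta^k(A_0)$), and the conclusion is forced in the same way by $\theta$-invariance plus pairwise disjointness inside a probability space.
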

\begin{proof}
We will prove $\mu(D_+)=0$, that $\mu(D_-)=0$ can be shown analogously. Intuitively the proof works in the following way: Each orbit in $D_+$ has a starting point, which is the unique point in the orbit not lying in $D_1$. We collect all starting points in the set $Y=D_+\backslash D_1$. Then $Y$ is measurable because $D_+$ and $D_1$ are. The sets $\theta^n(Y)$ for $n\geq0$ are all mutually disjoint, but must have the same measure by the $\theta$-invariance of $\mu$, and their union is equal to $D_+$. Since $\mu$ is a probability measure this concludes the proof. 

Now the technical part: A short calculation shows that $\theta^n(D_+)$ equals $D_+\cap D_n$, and thus 
\begin{align*}
   \theta^n(Y)&=\theta^n(D_+\backslash(D_1\cap D_{-n}))=\theta^n(D_+)\backslash\theta^n(D_1\cap D_{-n})=(D_+\cap D_n)\backslash D_{n+1}\\&=D_+\cap(D_n\backslash D_{n+1}).
\end{align*}
 This implies $\theta^n(Y)\cap\theta^m(Y)=\emptyset$ for all $n\neq m$, $n,m\geq0$. Furthermore we obtain
\begin{align*}
\bigcup_{n\geq0}\theta^n(Y)&=D_+\cap\bigcup_{n\geq0}D_n\backslash D_{n+1}=D_+.
\end{align*}
Hence
\begin{align*}
\mu(D_+)=\mu(\bigcup_{n\geq0}\theta^n(Y))=\sum_{n=0}^\infty\mu(\theta^n(Y))= \sum_{n=0}^\infty\mu(Y),
\end{align*}
which implies $\mu(Y)=0$, and therefore $\mu(D_+)=0$.
\end{proof}
\begin{corollary}\label{cor: no nonempty invariant subset}
 The set $D_+\cup D_-$ cannot contain a nonempty compact $\theta$-invariant subset.
\end{corollary}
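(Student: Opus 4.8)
The plan is to contradict Proposition~\ref{prop: no measures} by exhibiting a $\theta$-invariant Borel probability measure supported on such a set. Suppose then that $K\subseteq D_+\cup D_-$ is a nonempty compact $\theta$-invariant subset. From the explicit descriptions of these sets one reads off that $D_+\subseteq\bigcap_{n\geq 0}D_{-n}$, that $D_+\subseteq U$, and that $D_+$ and $D_-$ are themselves $\theta$-invariant; consequently, for any $x\in D_+$ the forward orbit $(\theta^n(x))_{n\geq 0}$ is defined and stays in $D_+\subseteq U$. Since $K$ is nonempty and contained in $D_+\cup D_-$ it meets $D_+$ or $D_-$; replacing $\theta$ by $\theta^{-1}$ if necessary (which interchanges $D_+$ with $D_-$ and $U$ with $V$, and preserves $\theta$-invariance and the set $D_+\cup D_-$), we may assume $K\cap D_+\neq\emptyset$. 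Fix $x_0\in K\cap D_+$; by $\theta$-invariance of $K$ the whole forward orbit $(\theta^n(x_0))_{n\geq 0}$ lies in $K$.

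Next I would run a Krylov--Bogolyubov averaging argument, adapted to the fact that $\theta$ is only a partial map. Put $\nu_n\coloneqq\frac1n\sum_{j=0}^{n-1}\delta_{\theta^j(x_0)}\in M^1(K)$. As a compact subset of the second countable space $X$, $K$ is compact metrizable, so $M^1(K)$ is weak-$*$ compact and, after passing to a subnet, $\nu_n\to\nu$ for some $\nu\in M^1(K)$, which we view as a regular Borel probability measure on $X$ supported on $K$. To check $\theta$-invariance of $\nu$, let $f\in C_c(X)$ with $\supp f\subseteq V$. Then $f\circ\theta$, defined on $U$ and extended by $0$ on $X\setminus U$, is a well-defined element of $C_c(X)$ with support in the compact set $\theta^{-1}(\supp f)\subseteq U$ (the two defining formulas agree on the open cover $\{U,\ X\setminus\theta^{-1}(\supp f)\}$ of $X$). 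Using that every $\theta^j(x_0)$ lies in $U$,
\[\int (f\circ\theta)\,d\nu_n=\frac1n\sum_{j=0}^{n-1}f\bigl(\theta^{j+1}(x_0)\bigr)=\int f\,d\nu_n+\frac1n\bigl(f(\theta^n(x_0))-f(x_0)\bigr),\]
and letting $n\to\infty$ along the subnet gives $\int(f\circ\theta)\,d\nu=\int f\,d\nu$. As this holds for all such $f$, the homeomorphism $\theta\colon U\to V$ carries $\nu|_U$ onto $\nu|_V$ (uniqueness in the Riesz representation theorem on $V$), that is, $\nu(\theta^{-1}(Y))=\nu(Y)$ for every Borel $Y\subseteq V$. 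Hence $\nu\in M^1_\theta(X)$.

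To conclude, $\nu$ is supported on $K\subseteq D_+\cup D_-$, so $\nu(D_+)+\nu(D_-)\geq\nu(K)=1$, while Proposition~\ref{prop: no measures} forces $\nu(D_+)=\nu(D_-)=0$ — a contradiction, so no such $K$ exists. The only delicate point is that $\theta$ is not a continuous self-map of $K$, so the classical Krylov--Bogolyubov theorem does not apply literally; the resolution is that the chosen forward orbit never leaves the domain $U$, and that $\theta$-invariance need only be tested against functions supported in $V$, so the non-totality of $\theta$ is absorbed by extending $f\circ\theta$ by zero.
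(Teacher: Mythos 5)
Your proof is correct and follows the same strategy as the paper: produce a $\theta$-invariant probability measure supported on the hypothetical compact invariant set $K\subseteq D_+\cup D_-$ and contradict Proposition~\ref{prop: no measures}. The only difference is that the paper outsources the existence of such a measure to a citation (Theorem~4.21 of the prequel), whereas you prove it inline with a correctly adapted Krylov--Bogolyubov argument, handling the partiality of $\theta$ exactly as needed (forward orbit stays in $D_+\subseteq U$, test functions supported in $V$).
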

\begin{proof}
    If $K$ is a compact $\theta$-invariant set contained in $D_-\cup D_+$ then by \cite[Theorem 4.21]{partI} there exists a probability measure $\mu$ such that $\supp\mu\subset K$. By Proposition \ref{prop: no measures} we must have $\mu(K)=0$, which is a contradiction. 
\end{proof}
\begin{remark}
    One can also prove Corollary \ref{cor: no nonempty invariant subset} without using invariant measures. For this one first proves that for a minimal partial automorphism on a compact space, the set $D_{\gl}$ has to be dense in $X$, so in particular it is nonempty. One then shows that if $X$ is compact, then there exists a closed invariant subset such that the action restricted to this subset is minimal (which is well known in the global case, see for example \cite[Theorem 3.5]{efhn:2016}). The minimal subsystem must have a global orbit. This orbit is global for the original action as well, which concludes the proof. 
\end{remark}

Proposition \ref{prop: no measures} could be interpreted as saying that from a measure-theoretical perspective,  $D_-$ and $D_+$ can be ignored. By choosing an invariant measure we can pass from a partial automorphism, which is a topological dynamical system, to a measure theoretical dynamical system. Then the resulting measure preserving transformation will be defined almost everywhere, at least as long as $D_{\fin}$ is empty. It seems surprising that $D_-$ and $D_+$, which can make up a considerable part or (if $X$ is not compact) possibly all of the space, should not play any role when it comes to measure theoretic aspects.  Moreover, this phenomenon is not restricted to measure theory, as the following proposition shows. This proposition is an analogue of a well-known theorem concerning minimality of $\Z$-actions (see for example \cite[Lemma 11.1.2]{Giordano2018}), and tells us that on compact spaces, $D_-$ and $D_+$ can be ignored when considering minimality. 
\begin{theorem}\label{thm:minimality partial case}
    If $X$ is a locally compact Hausdorff space and $\theta:U\to V$ is a partial automorphism, then the following are equivalent:
    \begin{enumerate}
        \item The partial automorphism $\theta$ is minimal.
        \item For every point $x$ in $X$ the orbit $\orb(x)$ is dense in $X$.
        \end{enumerate}
    If $X$ is compact and infinite and $D_{\fin}$ is empty then (1) as well as (2) are equivalent to each of the following statements: 
    \begin{enumerate}
            \item[(3)] For every point $x$ in $D_{\gl}$ the orbit $\orb(x)$ is dense in $X$.
         \item[(4)]  For every point $x\in D_{\gl}$ the forward orbit $\orb_+(x)$ is dense in $X$.
         \item[(5)]  For every point $x\in D_{\gl}$ the backward orbit $\orb_-(x)$ is dense in $X$.
         \item[(6)] If $Y$ is a closed $\theta$-invariant subset such that $Y\cap D_{\gl}\neq\emptyset$, then $Y\in\{\emptyset,X\}$. 
       \item[(7)]  If $Y$ is a closed set such that $Y\cap D_{\gl}\neq\emptyset$ and $\theta(Y\cap U)\subset Y$, then $Y\in\{\emptyset,X\}$.
        \item[(8)]   If $Y$ is a closed set such that $Y\cap D_{\gl}\neq\emptyset$ and $\theta^{-1}(Y\cap V)\subset Y$, then $Y\in\{\emptyset,X\}$. 
    \end{enumerate}
\end{theorem}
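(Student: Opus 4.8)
The plan is to handle $(1)\Leftrightarrow(2)$ first, in the general locally compact setting, and then, under the standing hypotheses ($X$ compact and infinite, $D_{\fin}=\emptyset$), to fold the remaining conditions into one equivalence class by a web of implications passing through $(6)$. For $(1)\Leftrightarrow(2)$: if every orbit is dense and $Y$ is a nonempty closed $\theta$-invariant set, pick $x\in Y$; invariance of $Y$ forces $\orb(x)\subseteq Y$, so $Y\supseteq\overline{\orb(x)}=X$. Conversely, if $\theta$ is minimal and $x\in X$, then $\overline{\orb(x)}$ is nonempty, closed and $\theta$-invariant, the invariance being a net argument: a net in $\orb(x)$ converging to a point of the open set $U$ eventually lies in $U$, and continuity of $\theta$ on $U$ keeps the limit in $\overline{\orb(x)}$; symmetrically for $\theta^{-1}$ on $V$. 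Minimality then gives $\overline{\orb(x)}=X$. Neither direction needs compactness.

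Now assume the extra hypotheses. The backbone of the argument is the remark that, since $D_{\fin}=\emptyset$ forces $X=D_+\cup D_-\cup D_{\gl}$, every nonempty closed $\theta$-invariant subset $Y$, being compact, must meet $D_{\gl}$: otherwise $Y$ would be a nonempty compact $\theta$-invariant subset of $D_+\cup D_-$, contradicting Corollary~\ref{cor: no nonempty invariant subset}. This makes $(6)\Rightarrow(1)$ immediate. The remaining implications are cheap: $(2)\Rightarrow(3)$ (restrict to $x\in D_{\gl}$); $(4)\Rightarrow(3)$ (since $\orb_+(x)\subseteq\orb(x)$); $(7)\Rightarrow(6)$ and $(8)\Rightarrow(6)$ (a two-sidedly invariant set is in particular one-sidedly invariant); and $(3)\Rightarrow(6)$, $(4)\Rightarrow(7)$, $(5)\Rightarrow(8)$, which all go the same way: given the relevant closed set $Y$ and a point $x\in Y\cap D_{\gl}$, the invariance of $D_{\gl}$ together with the (one-sided) invariance of $Y$ keeps the (full, forward, or backward) orbit of $x$ inside $Y$, and density of that orbit then forces $Y=X$. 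Thus $(1)\Rightarrow(2)\Rightarrow(3)\Rightarrow(6)\Rightarrow(1)$ closes a loop, and once $(1)\Rightarrow(4)$ and $(1)\Rightarrow(5)$ are established, the pairs $(4),(7)$ and $(5),(8)$ are absorbed into the same class.

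The one substantial step is therefore $(1)\Rightarrow(4)$ (and, symmetrically, $(1)\Rightarrow(5)$): a minimal partial automorphism has dense forward orbits on $D_{\gl}$, the partial analogue of the classical fact for minimal homeomorphisms of compact spaces. I would prove it with $\omega$-limit sets. For $x\in D_{\gl}$ all iterates $\theta^n(x)$, $n\in\Z$, are defined; put $\omega(x)=\bigcap_{N\geq 0}\overline{\{\theta^n(x):n\geq N\}}$. Compactness makes this a nonempty closed set (a nested family with the finite intersection property) contained in $\overline{\orb_+(x)}$. The point is that $\omega(x)$ is $\theta$-invariant: if $y\in\omega(x)\cap U$ then for each $N$ and each neighbourhood $W$ of $\theta(y)$, continuity of $\theta$ at $y\in U$ yields a neighbourhood $W'\subseteq U$ of $y$ with $\theta(W')\subseteq W$; since $y\in\overline{\{\theta^n(x):n\geq N\}}$ there is $n\geq N$ with $\theta^n(x)\in W'$, so $\theta^{n+1}(x)\in W$ with $n+1\geq N$, whence $\theta(y)\in\omega(x)$. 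The reverse inclusion uses continuity of $\theta^{-1}$ on $V$ in the same way, approximating with iterates of index $\geq N+1$ so that one may shift the index down by one. Minimality then yields $\omega(x)=X$, hence $\overline{\orb_+(x)}=X$; the analogous $\alpha$-limit set $\bigcap_{N\geq 0}\overline{\{\theta^{-n}(x):n\geq N\}}$ gives $(1)\Rightarrow(5)$.

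The main obstacle is exactly the $\theta$-invariance of $\omega(x)$ and $\alpha(x)$ in the partial setting: a limit point of the forward orbit need not lie in $U$, so $\theta$ cannot be applied there, but the limit points that do lie in $U$ are sent back into $\omega(x)$, and that suffices. Everything else is bookkeeping — tracking which conditions are one-sided versus two-sided, and invoking the Corollary together with $D_{\fin}=\emptyset$ to rule out a closed invariant set hiding entirely inside $D_+\cup D_-$.
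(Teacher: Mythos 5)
Your proposal is correct and follows essentially the same route the paper indicates: it only sketches the proof, saying it is ``very similar to the global case'' with the single extra ingredient that, by Corollary \ref{cor: no nonempty invariant subset} and $D_{\fin}=\emptyset$, every nonempty closed $\theta$-invariant set meets $D_{\gl}$ --- which is exactly the backbone of your web of implications. Your filling-in of the global-case machinery (orbit closures for $(1)\Leftrightarrow(2)$, and the $\omega$-/$\alpha$-limit set argument, with the correct care about applying $\theta$ only on $U$ and $\theta^{-1}$ only on $V$, for $(1)\Rightarrow(4)$ and $(1)\Rightarrow(5)$) is accurate and complete.
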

The proof of Theorem \ref{thm:minimality partial case} is very similar to the global case. The only additional ingredient is that by Corollary \ref{cor: no nonempty invariant subset}, if $Y$ is a closed $\theta$-invariant subset of $X$ then $Y\cap D_{\gl}\neq\emptyset$.
\begin{example}
Let $X=\mathbb{T}$ and let $\alpha:\mathbb{T}\to\mathbb{T}$ be the rotation on the torus by an irrational angle. Choose a point $x\in\mathbb{T}$. We define $\theta:\mathbb{T}\backslash\{x\}\to\mathbb{T}\backslash\{\alpha(x)\},y\mapsto\alpha(y)$. Then $\theta$ is minimal since every orbit is dense. We have
\begin{align*}
D_+=\orb_+(\alpha(x)),\quad D_-=\orb_-(x),\quad\text{and}\quad  D_{\gl}=X\backslash\orb(x).
\end{align*}
The only $\theta$-invariant measure is the Lebesgue measure on the torus.  
\end{example}

\subsection{Conformal measures}

\begin{definition}
Let $\theta:U\to V$ be a partial automorphism on a locally compact Hausdorff space $X$. A probability measure $\mu\in M^1(X)$ is called \emph{$d$-conformal with respect to $\theta$} for some $d\in\N$ if 
\[\mu(\theta(Y))=d\mu(Y)\]
for all measurable $Y\subset U$. This is a special case of a more general definition of conformal measures, see \cite[Definition 2.1]{DenkerUrbanski:1991}.

We write $M^1_\theta(X,d)$ for the compact convex set of all probability measures on $X$ that are $d$-conformal with respect to $\theta$.

Conformal measures for $d=1$ are the same as $\theta$-invariant measures in the sense of Section \ref{sec:invariant measures}.
\end{definition}

\begin{proposition}\label{prop:conformal measures}
Let $X$ be a locally compact Hausdorff space, and take $d>1$.
For $\mu\in M_\theta^1(X,d)$ we have 
\[\mu(D_{\gl})=\mu(D_+)=0. \]
 If $X$ is compact and either $D_-$ or $D_{\fin}$ is nonempty, then $M_\theta^1(X,d)$ is nonempty as well. 
\end{proposition}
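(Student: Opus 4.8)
The statement has two independent parts, and I would handle them separately.

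For the vanishing statements, note first that $D_{\gl}$ is invariant in the strong sense $\theta(D_{\gl}) = D_{\gl}$: a point is hit by $\theta^{n}$ for all $n\in\Z$ exactly when its $\theta$-image is, and $D_{\gl}\subseteq D_{-1}=U$, so $\theta$ is genuinely defined on it. Applying the defining relation of a $d$-conformal measure to $Y=D_{\gl}$ gives $\mu(D_{\gl})=\mu(\theta(D_{\gl}))=d\,\mu(D_{\gl})$, whence $\mu(D_{\gl})=0$ since $d>1$. For $D_{+}$ I would recycle the partition from the proof of Proposition \ref{prop: no measures}: with $Y\coloneqq D_{+}\backslash D_{1}$, the sets $\theta^{n}(Y)=D_{+}\cap(D_{n}\backslash D_{n+1})$ for $n\geq 0$ are pairwise disjoint, each contained in $U$, and union to $D_{+}$. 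Iterating conformality now gives $\mu(\theta^{n}(Y))=d^{n}\mu(Y)$, so $1\geq\mu(D_{+})=\sum_{n\geq 0}d^{n}\mu(Y)$ forces $\mu(Y)=0$ and hence $\mu(D_{+})=0$ — if anything this is cleaner than the case $d=1$, since $d^{n}\to\infty$.

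For the existence statement the plan is to write down an explicit purely atomic measure living on a single orbit, the choice of orbit depending on which of $D_{\fin}$, $D_{-}$ is assumed nonempty. If $D_{\fin}\neq\emptyset$, pick $x\in D_{\fin}$. Its orbit is finite, and since a periodic point would lie in $D_{\gl}$ rather than $D_{\fin}$ the orbit carries no cycle; hence it has the form $y_{0},y_{1}=\theta(y_{0}),\dots,y_{m}=\theta^{m}(y_{0})$ with the $y_{i}$ distinct, $y_{0}\notin V$ and $y_{m}\notin U$. Put $c_{i}\coloneqq\tfrac{d-1}{d^{m+1}-1}\,d^{i}$ and $\mu\coloneqq\sum_{i=0}^{m}c_{i}\,\delta_{y_{i}}$; this is a probability measure, and from $c_{i+1}=d\,c_{i}$ together with the facts that $y_{m}$ is the only orbit point outside $U$ and $y_{0}$ the only one outside $V$, one obtains $\mu(\theta(Y))=d\,\mu(Y)$ for every Borel $Y\subseteq U$. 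If instead $D_{-}\neq\emptyset$, pick $x\in D_{-}$ and let $k$ be maximal with $x\in D_{-k}$; following the forward orbit to the terminal point $z\coloneqq\theta^{k}(x)$, one checks that $z\notin U$, that $z$ again lies in $D_{-}$ (in particular $z\in V$), and hence that the backward orbit $z_{j}\coloneqq\theta^{-j}(z)$ is defined for all $j\geq 0$, consists of distinct points (as $z\notin U$ cannot be periodic), each in $D_{-}\subseteq V$, with $z_{j}\in U$ exactly for $j\geq 1$. Set $\mu\coloneqq\tfrac{d-1}{d}\sum_{j\geq 0}d^{-j}\,\delta_{z_{j}}$; the series converges precisely because $d>1$, so $\mu$ is a probability measure, and the weight recursion $c_{j-1}=d\,c_{j}$ again yields $\mu(\theta(Y))=d\,\mu(Y)$ for all Borel $Y\subseteq U$.

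I expect the real work to be in the second part, in two places. First, the orbit bookkeeping for $x\in D_{-}$: identifying the terminal point $z$, checking it lands in $D_{-}$ (hence in $V$, so the backward orbit really is infinite), and confirming that the backward orbit consists of distinct points lying in $U$. Second, upgrading the conformality check from singletons to an arbitrary Borel $Y\subseteq U$: this reduces to the observation that, for points of the orbit, $z_{k}\in\theta(Y)$ holds if and only if $z_{k}\in V$ and $z_{k+1}\in Y$ (and the analogous statement in the finite case), after which the weight recursion does the rest. Finally one should note that a finite linear combination, respectively an absolutely convergent series, of point masses on a Hausdorff space is automatically a regular Borel measure — inner regularity by truncating the series to a finite (hence compact) subset of its support, outer regularity by excising a suitable finite closed set — so the measures constructed above genuinely belong to $M^{1}_{\theta}(X,d)$. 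The hypotheses that $X$ is compact and that $D_{-}$ or $D_{\fin}$ is nonempty are exactly what is needed to know that one of these two orbit configurations actually occurs.
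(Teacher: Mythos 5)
Your argument is correct. The first half (vanishing of $\mu$ on $D_{\gl}$ and $D_+$) is exactly the paper's argument: the strong invariance $\theta(D_{\gl})=D_{\gl}$ forces $\mu(D_{\gl})=d\mu(D_{\gl})$, and the disjoint decomposition $D_+=\bigsqcup_{n\geq 0}\theta^n(D_+\setminus D_1)$ from Proposition \ref{prop: no measures} forces $\mu(D_+)=\sum_n d^n\mu(Y)$ to vanish. The existence half is where you genuinely diverge. The paper follows the Krylov--Bogolyubov template: it forms the normalised partial sums $\mu_n=\bigl(\sum_{i=0}^n d^{-i}\bigr)^{-1}\sum_{i=0}^n d^{-i}\delta_{\theta^{-i}(x)}$ along the backward orbit of a terminal point $x\in D_-\setminus U$, extracts a weak* limit using compactness of $M^1(X)$, and estimates $|\mu_n(\theta(Y))-d\mu_n(Y)|$ (omitting the $D_{\fin}$ case as similar). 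You instead write the limit down in closed form, $\mu=\tfrac{d-1}{d}\sum_{j\geq 0}d^{-j}\delta_{\theta^{-j}(z)}$, which is legitimate precisely because $d>1$ makes the geometric series norm-convergent --- in fact your measure \emph{is} the paper's weak* limit, since the $\mu_n$ converge in total variation. Your route buys three things: it sidesteps the delicate passage from weak* convergence of $\mu_n$ to convergence of $\mu_n(Y)$ for arbitrary Borel $Y$ (harmless here only because the convergence is actually in norm); it treats the $D_{\fin}$ case explicitly with the finite geometric weights $c_i=\tfrac{d-1}{d^{m+1}-1}d^i$ rather than by analogy; and it shows that compactness of $X$ is not actually needed for this half of the statement, since an atomic probability measure with summable weights is automatically a regular Borel probability measure on any locally compact Hausdorff space. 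The paper's limit argument, by contrast, is the one that would survive if the weights failed to be summable (as in the $d=1$ invariant-measure case). Your orbit bookkeeping --- terminal point $z\notin U$ lying again in $D_-\subseteq V$, distinctness of the backward orbit because a periodic point would lie in $D_{\gl}$, and the equivalence $z_k\in\theta(Y)\Leftrightarrow z_{k+1}\in Y$ for the conformality check on general Borel sets --- is all sound.
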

\begin{proof}
 Let $\mu$ be $d$-conformal with respect to $\theta$.  We have $D_{\gl}=\theta(D_{\gl})$, and hence
\[\mu(D_{\gl})=\mu(\theta(D_{\gl}))=d\mu(D_{\gl})\]
which implies $\mu(D_{\gl})=0$ because we assume $d>1$. 

To show $\mu(D_+)=0$ is similar to the proof of Proposition \ref{prop: no measures}. We can write $D_+=\bigsqcup_{i=0}^\infty\theta^{i}(Y)$ for some measurable set $Y$ contained in $D_+$. We then have 
\[\mu(D_+)=\sum_{i=0}^\infty d^i\mu(Y),\]
the right hand side of which can only converge if $\mu(Y)$, and hence $\mu(D_+)$, vanishes. 

Now suppose that $X$ is compact. We show that $M_\theta^1(X,d)$ is nonempty if $D_-$ is nonempty. This works similarly to how one would prove existence of an invariant measure. Take $x\in D_-$, and assume without loss of generality that $x$ does not lie in $U$. 

For every $n\in\N$ define $\mu_n\in M^1(X)$ by
\[\mu_n\coloneqq\left(\sum_{i=0}^nd^{-i}\right)^{-1}\left(\sum_{i=0}^nd^{-i}\delta_{\theta^{-i}(x)}\right)\]
where $\delta_{\theta^{-i}(x)}$ is the Dirac measure centered at $\theta^{-i}(x)$. We have $\norm{\mu_n}=1$ for all $n\in\N$, and thus the sequence $\{\mu_n\}_{n\in\N}$ has a subsequence converging to a measure $\mu\in M^1(X)$ in the weak*-topology. 
For any measurable subset $Y$ of $U$ we have
\[ |\mu_n(\theta(Y))-d\mu_n(Y)|\leq \mu(\theta^n(x))=d^{-n}\left(\sum_{i=0}^nd^{-i}\right)^{-1}\]
and the right hand side converges to zero as $n$ goes to infinity. Thus we obtain $\mu(\theta(Y))=d\mu(Y)$, which shows that $\mu$ lies in $M^1_{\theta}(X,d)$. The case that $D_f\neq\emptyset$ is similar, hence omitted. 
\end{proof}

\section{The tracial state space}\label{sect:tracial state space}

The main purpose of this section is to investigate the tracial state space of the Cuntz--Pimsner algebras associated to our correspondences. This was initiated in \cite{partI} by studying the case of line bundles. We now generalize the result obtained there to higher rank bundles.

\subsection{Traces on the fixed point algebra}

Let $X$ be a second countable locally compact Hausdorff space, $U$ and $V$ open subsets of $X$, and $\theta:U\to V$ a partial automorphism on $X$. Let $\V$ be a rank $d$ vector bundle on $X$. As in Section \ref{sect:twisted partial autos} we write $\E$ for the $C^*$-correspondence $\Gamma_0(\V,\theta)$ over $C_0(X)$.  

In the following, whenever we regard $M_{d^m}$ as a subalgebra of $M_{d^n}$ for natural numbers $d$, $m$ and $n$ with $m<n$, then the embedding is given by sending $a\in M_{d^m}$ to $a\otimes 1_{d^{n-m}}$.  

\begin{remark}\label{rem:traces for cont fields of matrices}
Take $n\in\N$, and let $A$ be a separable $m$-homogeneous $C^*$-algebra with primitive ideal space $X$. Tracial states on $A$ are in bijection to regular Borel probability measures on $X$. Recall from \cite{Fell:1961} that $A$ is a locally trivial $C_0(X)$-algebra with fibers $M_m$. There is a unique tracial state $\tr_m$ on $M_m$, given by the usual normalised trace. Then every trace $\tau$ on $A$ is of the form
\[\tau(a)=\tau_\mu(a):=\int_X\tr_m(a(x))d\mu,\qquad a\in A,\]
for some probability measure $\mu$ on $X$. If $\tau$ is a non-normalised trace, then $\mu$ is not a probability measure (but still finite). 
\end{remark}

\begin{lemma}\label{lem:traces interval algebras}
For every $n\geq 0$ there is an affine homeomorphism between $M^1(X)$ and $T(B_{[0,n]})$, given by sending a probability measure $\mu$ to 
\[ \tau_\mu(a)=\int_X\tr_{d^n}(a(x))d\mu(x),   \]
for $a\in B_{[0,n]}$. 
\end{lemma}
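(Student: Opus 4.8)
The plan is to establish the affine homeomorphism $M^1(X) \to T(B_{[0,n]})$, $\mu \mapsto \tau_\mu$, by first showing $\tau_\mu$ is a well-defined tracial state, then exhibiting an inverse and verifying continuity in both directions. The point is that $B_{[0,n]}$ is, by the discussion in Section \ref{sect:fibered structure}, a $C_0(X)$-algebra whose fiber over $x \in D_n$ is $M_{d^n}$ and over $x \in D_j \setminus D_{j+1}$ (for $j < n$) is $M_{d^j}$, embedded into $M_{d^n}$ via $a \mapsto a \otimes 1_{d^{n-j}}$. Since $X$ is compact (by the hypothesis of the surrounding section, $X$ is at least second countable locally compact; note that $B_{[0,n]}$ is unital precisely when $X$ is compact, and the statement as phrased implicitly assumes this, otherwise one replaces $M^1(X)$ with finite measures as in Remark \ref{rem:traces for cont fields of matrices}), $B_{[0,n]}$ is unital and $T(B_{[0,n]})$ is a compact convex set in the weak-$*$ topology.

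First I would check that $\tau_\mu$ is well-defined and tracial. The key observation is compatibility of the normalized traces under the embeddings: $\tr_{d^n}(a \otimes 1_{d^{n-j}}) = \tr_{d^j}(a)$ for $a \in M_{d^j}$. Thus the function $x \mapsto \tr_{d^n}(a(x))$ — where $a(x)$ denotes the image of $a$ in the fiber over $x$, viewed inside $M_{d^n}$ — is the same as $x \mapsto \tr_{d^{j(x)}}(a(x))$ evaluated in the ``true'' fiber, so it does not depend on how we present the fibers. One needs this function to be bounded and Borel measurable so the integral makes sense; boundedness is clear since $\|\tr_{d^n}(a(x))\| \le \|a(x)\| \le \|a\|$, and measurability follows because $a \mapsto a(x)$ is, for each $x$, a $\ast$-homomorphism and $x \mapsto \|a(x)\|$ is upper semicontinuous for an upper-semicontinuous $C^*$-bundle (one can also first treat $a$ in a dense subalgebra where continuity of $x \mapsto a(x)$ is manifest, then pass to limits). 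Positivity, the trace identity $\tau_\mu(ab) = \tau_\mu(ba)$, and normalization $\tau_\mu(1) = \mu(X) = 1$ all follow fiberwise from the corresponding properties of $\tr_{d^n}$.

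Next I would construct the inverse. Given $\tau \in T(B_{[0,n]})$, restrict it to the central subalgebra $\pi_u(C_0(X)) \cong C_0(X)$ (central by \cite[Proposition 3.22]{partI}); by the Riesz representation theorem this restriction is integration against a unique regular Borel probability measure $\mu_\tau$ on $X$. The claim is $\tau = \tau_{\mu_\tau}$. To see this, note that for $f \in C_0(X)$ with $0 \le f \le 1$, the compression $a \mapsto \pi_u(f) a \pi_u(f)$ together with the trace property lets one localize; more cleanly, one shows directly that $\tau(\pi_u(f) a) = \int_X f(x) \tr_{d^n}(a(x))\, d\mu_\tau(x)$ for all $a$ and all $f$. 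Fix $x_0 \in D_n$: the fiber map $B_{[0,n]} \to M_{d^n}$ is surjective, so for a matrix unit $e_{kl} \in M_{d^n}$ lift it to $a \in B_{[0,n]}$; using a bump function $f$ concentrated near $x_0$ and the trace property, one sees that the ``local trace'' at $x_0$ that $\tau$ induces on $M_{d^n}$ must be a tracial functional, hence a scalar multiple of $\tr_{d^n}$, and the scalar is recovered by testing against $1$, giving the density of $\mu_\tau$. A routine approximation argument over the dense subalgebra of sections that are continuous in $x$, plus the decomposition of $X$ into the locally closed sets $D_j \setminus D_{j+1}$, extends this to all of $B_{[0,n]}$. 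Affinity of $\mu \mapsto \tau_\mu$ is immediate from linearity of the integral, and the two maps are mutually inverse by construction.

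Finally, continuity: $\mu \mapsto \tau_\mu$ is weak-$*$--to--weak-$*$ continuous because, for $a$ in the dense subalgebra of fiberwise-continuous sections, $x \mapsto \tr_{d^n}(a(x))$ is a genuine continuous function on $X$, so $\mu \mapsto \int \tr_{d^n}(a(x))\, d\mu$ is weak-$*$ continuous, and an $\varepsilon/3$ argument handles general $a$. A continuous bijection between compact Hausdorff spaces is a homeomorphism, so the inverse is automatically continuous. \textbf{The main obstacle} I anticipate is the measurability and ``fiberwise localization'' step — making rigorous that $\tau$ decomposes as an integral of the local normalized traces, given that $B_{[0,n]}$ is only an upper-semicontinuous (not locally trivial) $C^*$-bundle and that the fiber dimension jumps along the stratification by the $D_j$; the cleanest route is probably to reduce everything to the dense $\ast$-subalgebra generated by $B_0, \dots, B_n$ where sections behave continuously, invoke Remark \ref{rem:traces for cont fields of matrices} on each stratum, and then pass to limits.
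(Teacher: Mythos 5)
Your proposal is essentially correct, but it takes a genuinely different route from the paper. The paper does not work with the $C_0(X)$-algebra structure of $B_{[0,n]}$ directly; instead it uses the algebraic decomposition $B_{[0,n]}=B_0+\cdots+B_n$, where each $B_i\cong\K(\E^{\otimes i})$ is a locally trivial homogeneous $C^*$-algebra with primitive ideal space $D_i$. Restricting a trace $\tau$ to each summand and invoking Remark \ref{rem:traces for cont fields of matrices} produces finite measures $\mu_i$ on the nested open sets $D_i$; the consistency $\mu_i|_{D_j}=\mu_j$ for $i<j$ lets one glue these into a single measure $\mu$ on $X$, and writing $a=\sum_i a_i$ with $a_i\in B_i$ then gives the integral formula by summing over $i$. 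This sidesteps precisely the step you flag as your main obstacle: the fiberwise localization of $\tau$ over the non-locally-trivial, stratified bundle $B_{[0,n]}$, and the extraction of the measure from the central copy of $C_0(X)$ via Riesz. Your route is more self-contained --- it reproves the homogeneous case by hand, using uniqueness of the tracial state on the matrix fibers and bump functions --- and it makes the role of the center explicit, but it forces you to handle the jumps in fiber dimension along the strata $D_j\setminus D_{j+1}$ yourself; the paper's route confines all such issues to the already-known homogeneous case on each $D_i$ plus the (asserted) compatibility of the glued measures. Both arguments ultimately rest on the same compatibility of normalized traces under the unital embeddings $M_{d^j}\hookrightarrow M_{d^n}$, which you state correctly, and your observation about non-unitality when $X$ is not compact is a legitimate point that the paper passes over silently.
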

\begin{proof}
By definition we have $B_{[0,n]}=B_0+...+B_n$ with $B_0\cong C_0(X)$ and $B_i\cong \K(\E^{\otimes i})$ for $i=1,...,n$. Recall from Section \ref{sect:fibered structure} that $\K(\E^{\otimes i})$ is a homogeneous $C^*$-algebra with fibers $M_{d^n}$ (where $d$ is the rank of the vector bundle) and primitive ideal space homeomorphic to $D_{i}$. Here $D_i$ is the $i$-th domain of $\theta$. 

Take $\tau\in T(B_{[0,n]})$. According to Remark \ref{rem:traces for cont fields of matrices} there exist finite measures $\mu_i$ on $D_i$ for $i=0,...,n$ such that $\tau$ restricted to $B_i$ is equal to $\tau_{\mu_i}$.  Recall that for $i<j$, the domain $D_{j}$ is contained in $D_{i}$. The restriction $\mu_i|_{D_j}$ of $\mu_i$ to $D_j$ agrees with $\mu_j$. Thus we can define a measure $\mu$ on $X$ by setting $\mu|_{D_i}\coloneqq \mu_i$. Any $a\in B_{[0,n]}$ can be written as a sum of $a_i\in B_i$ for $i=0,...,n$. The calculation 
\begin{align*}
\tau(a)=\sum_{i}\tau(a_i)=\sum_{i}\int_{D_i}\tr_{d^n}(a_i(x))d\mu_i=\sum_{i}\int_X\tr_{d^n}(a_i(x))d\mu=\int_X\tr_{d^n}(a(x))d\mu
\end{align*}
yields the statement of the lemma. 
\end{proof}
Recall that the fiber $\CP(\E)^0_x$ for $x\in X$ is isomorphic either to the trivial $C^*$-algebra, to a matrix algebra or to the UHF algebra $M_{d^\infty}$ of type $d^\infty$. We write $\tr_{d^\infty}$ for the unique normalised trace on $M_{d^\infty}$. 
\begin{proposition}\label{prop:traces fp algebra}
There is an affine homeomorphism 
\[\rho:M^1(X)\to T(\CP(\E)^0),\]
 given by sending a probability measure $\mu$ to 
\[ \tau_\mu(a)=\int_X\tr_{d^\infty}(a(x))d\mu(x),   \]
for $a\in \CP(\E)^0$.
\end{proposition}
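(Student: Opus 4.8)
The plan is to realize $\CP(\E)^0$ as the inductive limit $\varinjlim B_{[0,n]}$ (as recalled in the preliminaries, citing \cite[Proposition 5.7]{Katsura:2004}) and to transport the affine homeomorphisms $M^1(X) \cong T(B_{[0,n]})$ from Lemma \ref{lem:traces interval algebras} along this limit. Since trace spaces are contravariantly functorial and turn inductive limits into (inverse) limits, we get $T(\CP(\E)^0) \cong \varprojlim T(B_{[0,n]})$, and the point is that the connecting maps on the $M^1(X)$ side are all the identity, so the inverse limit is again $M^1(X)$.

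First I would record the compatibility of the traces $\tau_\mu$ across the tower. Concretely, for $m < n$ the inclusion $B_{[0,m]} \hookrightarrow B_{[0,n]}$ is unital and, fiberwise over a point $x \in D_m$, is the standard embedding $M_{d^m} \hookrightarrow M_{d^n}$, $a \mapsto a \otimes 1_{d^{n-m}}$ (and similarly with smaller matrix sizes off $D_m$); this is exactly the convention fixed just before Remark \ref{rem:traces for cont fields of matrices}. Since $\tr_{d^n}(a \otimes 1_{d^{n-m}}) = \tr_{d^m}(a)$, restricting the trace on $B_{[0,n]}$ associated to $\mu$ back to $B_{[0,m]}$ gives the trace on $B_{[0,m]}$ associated to the same $\mu$. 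Hence under the identifications of Lemma \ref{lem:traces interval algebras}, the restriction maps $T(B_{[0,n]}) \to T(B_{[0,m]})$ correspond to $\id_{M^1(X)}$.

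Next I would define $\rho$ and check it is a well-defined affine continuous bijection. Given $\mu \in M^1(X)$, the functionals $\tau_\mu$ on each $B_{[0,n]}$ (using $\tr_{d^n}$) form a compatible family by the previous step, hence assemble to a bounded positive unital functional on the dense subalgebra $\bigcup_n B_{[0,n]}$, which extends by continuity to a state $\rho(\mu)$ on $\CP(\E)^0$; it is a trace since it is a trace on a dense $*$-subalgebra. Fiberwise this is $a \mapsto \int_X \tr_{d^\infty}(a(x))\, d\mu(x)$ once one notes that on the fiber over $x \in \bigcap_{j} D_j$ the functional $\tr_{d^\infty}$ is the limit of $\tr_{d^j}$ under the UHF embeddings, and over a point lying in $D_j \setminus D_{j+1}$ the fiber stabilizes to $M_{d^j}$ with its normalized trace. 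Affinity is immediate. For injectivity: if $\rho(\mu) = \rho(\nu)$ then restricting to $B_0 \cong C_0(X)$ gives $\int_X f\, d\mu = \int_X f\, d\nu$ for all $f \in C_0(X)$, so $\mu = \nu$ by Riesz representation. For surjectivity: given $\tau \in T(\CP(\E)^0)$, restrict it to each $B_{[0,n]}$ to get $\tau_n \in T(B_{[0,n]})$; by Lemma \ref{lem:traces interval algebras} there is a unique $\mu_n \in M^1(X)$ with $\tau_n = \tau_{\mu_n}$, and by the compatibility step these all coincide with a single $\mu$, whence $\tau = \rho(\mu)$ on the dense union and so on all of $\CP(\E)^0$. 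Finally, $\rho$ is a continuous bijection between compact Hausdorff spaces (both $M^1(X)$ with the weak$^*$ topology and $T(\CP(\E)^0)$ are compact Hausdorff), hence a homeomorphism; continuity of $\rho$ itself is clear since $\mu \mapsto \int f\, d\mu$ is weak$^*$-continuous for $f \in C_0(X) = B_0$ and these generate, while continuity of $\rho^{-1}$ follows from compactness.

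The main obstacle, and the only genuinely non-formal point, is identifying the extended trace $\rho(\mu)$ with the stated fiberwise integral formula over all of $X$ — in particular handling the fibers that are the UHF algebra $M_{d^\infty}$, where one must argue that the normalized traces $\tr_{d^n}$ on the finite-stage matrix blocks converge (in the appropriate sense, via the upper-semicontinuous bundle structure from \cite[Section 3.5]{partI}) to $\tr_{d^\infty}$, so that the integrals $\int_X \tr_{d^n}(a(x))\,d\mu$ converge to $\int_X \tr_{d^\infty}(a(x))\,d\mu$ for every $a \in \CP(\E)^0$. Everything else is a routine transfer of Lemma \ref{lem:traces interval algebras} through the inductive limit.
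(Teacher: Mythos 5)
Your proposal is correct and follows essentially the same route as the paper: realize $\CP(\E)^0$ as $\varinjlim B_{[0,n]}$, apply Lemma \ref{lem:traces interval algebras} at each stage, observe that all the measures $\mu_n$ coincide because $C_0(X)=B_0$ sits inside every $B_{[0,n]}$, and pass to the limit by a density/approximation argument (the paper handles your flagged ``non-formal point'' about the $M_{d^\infty}$ fibers with exactly the estimate $\int_X|\tr_{d^n}(a_n(x))-\tr_{d^\infty}(a(x))|\,d\mu\le\norm{a_n-a}$, using compatibility of normalized traces under unital embeddings). Your write-up is if anything more complete, since the paper leaves injectivity, affinity, and the compactness argument for the homeomorphism implicit.
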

\begin{proof}
Recall that $\CP(\E)^0$ is the inductive limit of the $B_{[0,n]}$ with the obvious inclusions as connecting maps, which are $C_0(X)$-linear and unital. Let $\tau$ be a trace on $\CP(\E)^0$. Restricting $\tau$ to $B_{[0,n]}$ and applying Lemma \ref{lem:traces interval algebras} yields a sequence of probability measures $\{\mu_n\}_{n\geq 0}$ such that $\tau|_{B_{[0,n]}}=\tau_{\mu_n}$. Any $f\in C_0(X)$ lies in $B_{[0,n]}$ for all $n\geq 0$. We have $\tau_{\mu_m}(f)=\tau_{\mu_n}(f)$ and thus $\mu_m=\mu_n$  for all $m,n\geq 0$. Write $\mu$ for $\mu_n$. 

Take $a\in\CP(\E)^0$ and $\varepsilon>0$. There exist $n\geq 0$ and $a_n\in B_{[0,n]}$ such that $\norm{a-a_n}<\varepsilon/2$. Then 
\begin{align*}
&|\tau(a)-\int_X\tr_{d^\infty}(a(x))d\mu(x)|\leq |\tau(a)-\tau_{\mu}(a_n)|+|\tau_{\mu}(a_n)-\int_X\tr_{d^\infty}(a(x))d\mu(x)|\\
&= |\tau(a)-\tau(a_n)|+\int_X|\tr_{d^n}(a_n(x))-\tr_{d^\infty}(a(x))|d\mu(x)\leq \norm{a-a_n}+\norm{a_n-a}<\varepsilon.
\end{align*}
 Since $\varepsilon$ was arbitrary, this shows the claim. 
\end{proof}

\subsection{Traces on the Cuntz--Pimsner algebra induced by conformal measures}\label{sect:traces cp algebra}
We now prove the main result of this section, namely that traces on the Cuntz--Pimsner algebra associated to $\Gamma_0(\V,\theta)$ are in bijective correspondence with certain probability measures on $X$. We first need to introduce such measures, and investigate conditions for their existence.  

The setting is the same as in the previous section. In particular our $C^*$-correspondence $\E$ is given by $\Gamma_0(\V,\theta)$, for a partial automorphism $\theta$ on the second countable locally compact Hausdorff space $X$, and a vector bundle $\V$ of constant rank $d\in\N$.

\begin{lemma}\label{lem:restricting traces}
Assume that $\theta$ is free. Let $\Phi:\CP(\E)\to \CP(\E)^0$ be the canonical conditional expectation. Then the map $\pi:T(\CP(\E))\to T(\CP(\E)^0)$ given by restricting traces is an affine homeomorphism onto its image. Its inverse sends a trace $\tau$ on $\CP(\E)^0$ to $\tau\circ\Phi$. 
\end{lemma}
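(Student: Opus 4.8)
The plan is to show that $\pi$ is well-defined, injective, affine and continuous with continuous inverse, with the candidate inverse $\tau \mapsto \tau \circ \Phi$. The key structural fact to exploit is the freeness of $\theta$: this is precisely what guarantees that the gauge action (equivalently, the $\Z$-grading on $\CP(\E)$) is ``sufficiently outer'' so that traces cannot see the higher spectral subspaces. Concretely, the first and main step is to prove that every tracial state $\tau$ on $\CP(\E)$ satisfies $\tau = \tau \circ \Phi$, i.e. $\tau$ vanishes on $\CP(\E)^n$ for all $n \neq 0$. Granting this, the map $\pi$ is automatically injective (if $\tau_1|_{\CP(\E)^0} = \tau_2|_{\CP(\E)^0}$ then $\tau_1 = \tau_1\circ\Phi = \tau_2\circ\Phi = \tau_2$), its inverse on the image is $\tau \mapsto \tau\circ\Phi$, affinity is clear, and continuity in the weak-$*$ topology is immediate in both directions since $\Phi$ is a fixed continuous map. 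So the whole statement reduces to the trace-vanishing claim.

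To prove that $\tau$ vanishes on $\CP(\E)^n$ for $n \neq 0$, I would argue it suffices to show $\tau$ vanishes on a dense subset, and by the grading description from the preliminaries (Katsura's analysis) elements of $\CP(\E)^n$ for $n > 0$ are limits of sums of terms of the form $t_u(\xi_1)\cdots t_u(\xi_n)\,b$ with $b \in \CP(\E)^0$-type data, and for $n<0$ their adjoints. Using the trace property, $\tau(t_u(\xi)x) = \tau(x\,t_u(\xi))$, one wants to ``rotate'' the leading creation operator around; the standard device is to insert an approximate unit or a suitable $C_0(X)$-function whose support is moved by $\theta$ to a disjoint set, using freeness to find, near any point, a function $f$ with $\varphi_\E(f) = f\circ\theta$-type behaviour making $f \cdot t_u(\xi)$ and $t_u(\xi)\cdot f$ orthogonal in the appropriate sense. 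This is exactly the mechanism behind uniqueness-type theorems for Cuntz--Pimsner algebras of topologically free correspondences; since $\CP(\E)^0$ is the section algebra of a continuous field of matrix/UHF algebras over $X$ with $C_0(X)$ central, and the freeness of $\theta$ gives that $D_{\gl}$ has no periodic points, one can localize: for any finite set of sections and any $\varepsilon$, partition (a relevant compact piece of) $X$ into small pieces each of which is moved off itself by $\theta$, and estimate. I expect this is most cleanly packaged by citing or adapting a known statement that for a free partial automorphism the canonical expectation $\Phi$ is the unique tracial-state-preserving conditional expectation, or equivalently that $\CP(\E)^n$ lies in the kernel of every trace.

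The main obstacle is precisely this trace-vanishing step: making the ``rotate the creation operator off itself using freeness'' argument rigorous in the partial-action setting, where $\theta$ is only defined on the open set $U$ and one must be careful that the cut-down functions live in $C_0(X)$ and interact correctly with $\varphi_\E$ (recall $f \circ \theta$ is only meaningful after multiplying by a section vanishing outside $U$). The right formulation is likely: given $\xi, \eta \in \E$ and $\delta > 0$, choose an open cover of $\supp\langle\xi,\xi\rangle$ by sets $W$ with $\theta(W \cap U) \cap W = \emptyset$ (possible by freeness together with a compactness/local argument, since a free partial $\Z$-action has no fixed points and one can separate $x$ from $\theta(x)$), take a subordinate partition of unity, and observe that each piece $f^{1/2}\,t_u(\xi)\,t_u(\eta)^*\,f^{1/2}$ has trace zero because the trace property lets one move $f^{1/2}$ through to meet $\varphi_\E$ acting by $f\circ\theta$, which is orthogonal to $f$. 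Once this is set up, everything else is bookkeeping; none of the remaining parts (affinity, the homeomorphism onto the image, identifying the inverse) require more than the basic continuity of $\Phi$ and the definition of the weak-$*$ topology, so I would keep those brief.
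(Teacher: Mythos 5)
Your proposal is correct and follows essentially the same route as the paper: the entire lemma is reduced to showing $\tau\circ\Phi=\tau$ for every trace $\tau$ on $\CP(\E)$, which is exactly the freeness-based ``move the support off itself with a partition of unity'' argument you describe (the paper simply outsources this step to \cite[Lemma 4.13]{partI} and the proof of \cite[Proposition 4.23]{partI}). The remaining bookkeeping about affinity, injectivity and continuity of $\pi$ and its inverse is handled the same way in both.
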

\begin{proof}
Using \cite[Lemma 4.13]{partI} one can show that $\tau\circ\Phi=\tau$ for all $\tau\in T(\CP(\E))$ in the same way as in the proof of \cite[Proposition 4.23]{partI}. Thus any trace on $\CP(\E)$ is completely determined by its restriction to $\CP(\E)^0$. The claim follows.
\end{proof}

Recall that $\pi$ is the map from $T(\CP(\E))$ to $T(\CP(\E)^0)$ given by Lemma \ref{lem:restricting traces}, and $\rho$ is the affine homeomorphism from $T(\CP(\E)^0)$ to $M^1(X)$ given by Proposition \ref{prop:traces fp algebra}. 
\begin{proposition}\label{prop:traces cp algebra}
 We have $\im\rho\circ\pi=M_\theta^1(X,d)$, in particular there is an affine homeomorphism between $T(\CP(\E))$ and $M_\theta^1(X,d)$ if $\theta$ is free. The tracial state on $\CP(\E)$ associated to a $d$-conformal probability measure $\mu$ is given by 
\[\tau_\mu(a)=\int_X\tr_{d^\infty}(\Phi(a)(x))d\mu(x)\]
for all $a\in\CP(\E)$. 
\end{proposition}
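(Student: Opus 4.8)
The plan is to show the two inclusions $\im(\rho\circ\pi)\subseteq M_\theta^1(X,d)$ and $M_\theta^1(X,d)\subseteq\im(\rho\circ\pi)$, since by Lemma \ref{lem:restricting traces} and Proposition \ref{prop:traces fp algebra} both $\pi$ and $\rho$ are affine homeomorphisms onto their images, so once equality of the image with $M_\theta^1(X,d)$ is established, the composition $(\rho\circ\pi)^{-1}$ is the claimed affine homeomorphism $M_\theta^1(X,d)\to T(\CP(\E))$, and unwinding $\pi^{-1}(\tau)=\tau\circ\Phi$ together with the formula from Proposition \ref{prop:traces fp algebra} gives exactly $\tau_\mu(a)=\int_X\tr_{d^\infty}(\Phi(a)(x))\,d\mu(x)$.

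First I would take a trace $\tau\in T(\CP(\E))$, set $\tau_0=\pi(\tau)=\tau|_{\CP(\E)^0}$ and $\mu=\rho(\tau_0)$, so that $\tau_0(a)=\int_X\tr_{d^\infty}(a(x))\,d\mu(x)$ for $a\in\CP(\E)^0$. To see $\mu$ is $d$-conformal, the key computation is to evaluate $\tau$ on elements of the form $\psi_t(\theta_{\xi,\xi})=\xi\xi^*$ with $\xi\in\E=\Gamma_0(\V,\theta)$. On one hand, since $\xi\xi^*\in B_1\subseteq\CP(\E)^0$ and the fiber of $\K(\E)$ over $x\in V$ is computed by $\xi\xi^*(x)=\xi(\theta^{-1}(x))\xi(\theta^{-1}(x))^*$ (recalled in Section \ref{sect:fibered structure}), we get $\tr_{d^\infty}(\xi\xi^*(x))=\frac1d\beta(\xi(\theta^{-1}(x)),\xi(\theta^{-1}(x)))$ for $x\in V$ and $0$ otherwise, so $\tau(\xi\xi^*)=\frac1d\int_V \langle\xi,\xi\rangle(\theta^{-1}(x))\,d\mu(x)=\frac1d\int_U\langle\xi,\xi\rangle(x)\,d(\theta_*\mu)(x)$ where $\theta_*\mu(Z)=\mu(\theta(Z))$ for $Z\subseteq U$. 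On the other hand, by the trace property $\tau(\xi\xi^*)=\tau(\xi^*\xi)=\tau(\pi_u(\langle\xi,\xi\rangle))=\int_X\langle\xi,\xi\rangle\,d\mu$, and $\langle\xi,\xi\rangle$ is supported in $U$. Comparing, $\int_U\langle\xi,\xi\rangle\,d\mu=\frac1d\int_U\langle\xi,\xi\rangle\,d(\theta_*\mu)$ for all $\xi\in\E$; since functions of the form $\langle\xi,\xi\rangle$ span a dense subspace of $C_0(U)$ (using local triviality of $\V$ and a partition of unity, every nonnegative compactly supported function in $C_0(U)$ is such an inner product, e.g. $\langle\xi,\xi\rangle=f$ for $\xi$ a section of norm $\sqrt f$ in a trivialization), this forces $\mu(\theta(Y))=d\,\mu(Y)$ for all measurable $Y\subseteq U$, i.e. $\mu\in M_\theta^1(X,d)$.

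For the reverse inclusion, given $\mu\in M_\theta^1(X,d)$, I would define $\tau_0=\rho^{-1}(\mu)\in T(\CP(\E)^0)$ and $\tau=\tau_0\circ\Phi$, and show $\tau$ is a trace on $\CP(\E)$; then $\rho\circ\pi(\tau)=\mu$ by construction, giving $\mu\in\im(\rho\circ\pi)$. Positivity of $\tau$ is clear since $\Phi$ is positive and $\tau_0$ is a state. The content is the trace identity $\tau(ab)=\tau(ba)$. Using the $\Z$-grading $\CP(\E)=\overline{\bigoplus_n\CP(\E)^n}$, by continuity and linearity it suffices to check this for $a\in\CP(\E)^m$, $b\in\CP(\E)^n$; then $ab\in\CP(\E)^{m+n}$ and $ba\in\CP(\E)^{m+n}$, and $\Phi$ kills both unless $m+n=0$. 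So I reduce to $a\in\CP(\E)^m$, $b\in\CP(\E)^{-m}$, and must show $\tau_0(\Phi(ab))=\tau_0(\Phi(ba))$. This is where the $d$-conformality of $\mu$ enters: I would approximate $a,b$ by finite sums of products of the generators $t_u(\xi)$, $t_u(\eta)^*$, $\pi_u(f)$, reducing to "monomials" $a=\pi_u(f)t_u(\xi_1)\cdots t_u(\xi_m)$ and $b=t_u(\eta_m)^*\cdots t_u(\eta_1)^*\pi_u(g)$, for which $ab$ and $ba$ lie in $\CP(\E)^0$ and can be identified with explicit sections of the bundle over $X$ (this computation already appears in \cite{partI}); then $\tau_0(ab)-\tau_0(ba)$ becomes a concrete difference of integrals $\int_X(\dots)\,d\mu-\frac1{d^m}\int_X(\dots)\,d\mu$ built from the inner products and left actions, and the relation $\mu\circ\theta^m=d^m\mu$ on $D_{-m}$ makes them cancel. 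The precise bookkeeping is essentially the same as the argument used to prove \cite[Proposition 4.23]{partI} and Lemma \ref{lem:restricting traces}, now with the conformal weight $d^m$ replacing the invariance used there, so I would organize it by citing that proof structure and highlighting only the place where $d$-conformality replaces invariance.

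The main obstacle is the trace identity in the reverse direction: one must verify $\tau_0\circ\Phi$ is tracial, and the only non-formal input is that $d$-conformality of $\mu$ is exactly what is needed to make the "off-diagonal" terms $\tau_0(\Phi(ab))$ and $\tau_0(\Phi(ba))$ agree. Concretely the difficulty is handling the left action $\varphi(f)\xi=\xi(f\circ\theta)$, which shifts the base point by $\theta$, so that moving a factor $t_u(\xi)$ past $\pi_u(f)$ in a monomial changes the integrand by precomposition with a power of $\theta$; tracking how many such shifts occur in $ab$ versus $ba$ and checking that the net effect is multiplication by $d^m$ versus $d^{-m}$, which cancels against $\mu(\theta^m(\cdot))=d^m\mu(\cdot)$, is the crux. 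Freeness of $\theta$ is used (via \cite[Lemma 4.13]{partI}) to guarantee $\tau=\tau\circ\Phi$ for every trace, which is what pins down $\pi$ as a homeomorphism onto its image and lets the argument go through; I would flag that this is the only place freeness is invoked.
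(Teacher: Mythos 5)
Your overall strategy is the same as the paper's: use Lemma \ref{lem:restricting traces} and Proposition \ref{prop:traces fp algebra} to reduce to measures on $X$, then prove the two inclusions, with $d$-conformality extracted from the trace identity in one direction and fed back into the ``off-diagonal'' terms in the other. Your argument for $\im(\rho\circ\pi)\subseteq M^1_\theta(X,d)$ is a legitimate variant of the paper's: you evaluate $\tau$ on $\xi\xi^*$ via the fiberwise formula from Proposition \ref{prop:traces fp algebra} and compare with $\tau(\xi^*\xi)=\tau(\pi_u(\langle\xi,\xi\rangle))$, then use that positive inner products $\langle\xi,\xi\rangle$ span a dense cone in $C_0(U)_+$; the paper instead builds approximate frame elements $\xi_{ik}$, $i=1,\dots,m$, $k=1,\dots,d$, and derives $|\tau(f)-d\tau(f\circ\theta)|<\varepsilon$ directly. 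Both work, and yours is if anything more transparent about where the factor $1/d$ comes from.

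The weak point is the converse direction (conformal measure $\Rightarrow$ trace). Your reduction to homogeneous elements via the grading is fine, but the monomials you then restrict to, $a=\pi_u(f)t_u(\xi_1)\cdots t_u(\xi_m)$ and $b=t_u(\eta_m)^*\cdots t_u(\eta_1)^*\pi_u(g)$, only span (the image of) $\E^{\otimes m}$, i.e.\ $t_u^{\otimes m}(\E^{\otimes m})$, which is \emph{not} dense in the spectral subspace $\CP(\E)^m$: a general homogeneous element of degree $m$ is a limit of sums of words $t_u(\xi_1)\cdots t_u(\xi_{m+k})\,t_u(\eta_k)^*\cdots t_u(\eta_1)^*$ with $k\geq 0$, equivalently of elements $\xi a$ with $\xi\in\E^m$ and $a\in\CP(\E)^0$. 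The paper's proof is careful about exactly this: it first proves $\tau_\mu(\eta^*\xi)=\tau_\mu(\xi\eta^*)$ for $\xi,\eta\in\E^k$ and then extends to $a_k=\xi a$, $b_k=\eta b$ with $a,b\in\CP(\E)^0$, which is what gives density in $\CP(\E)^k$. Moreover, the actual computation that makes the weights cancel --- the pointwise identity $w^*v=d^k\,\tr_{d^k}(vw^*)$ in the $d^k$-dimensional fibers combined with the change of variables $\int_X h(\theta^{-k}(x))\,d\mu(x)=d^k\int_X h\,d\mu$ coming from $d$-conformality --- is only gestured at in your sketch (``net effect is multiplication by $d^m$ versus $d^{-m}$'') and deferred to the proof of \cite[Proposition 4.23]{partI}. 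The mechanism you identify is the right one, but to make the argument complete you need to replace your monomial class by the dense class $\{\xi a:\xi\in\E^m,\,a\in\CP(\E)^0\}$ and carry out that trace/weight computation explicitly, as the paper does. Your remark that freeness enters only through $\tau=\tau\circ\Phi$ (Lemma \ref{lem:restricting traces}) is correct and matches the paper.
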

\begin{proof}
We first show that the image of $\rho\circ\pi$ contains $M_\theta^1(X,d)$. Recall that $\CP(\E)^k$ denotes the $k$-th spectral subspace of the Cuntz--Pimsner algebra.

 Let $\mu$ be a probability measure that is $d$-conformal with respect to $\theta$. We have to show that $\tau\coloneqq\tau_\mu\circ\Phi$ is a trace on $\CP(\E)$. One can verify that it is a state. Thus it remains to show that $\tau(ab)=\tau(ba)$ for all $a,b\in\CP(\E)$. 

As a first step, we show that for all $a_k,b_k\in\CP(\E)^k$, we have $\tau_\mu(b_k^*a_k)=\tau_\mu(a_kb_k^*)$. 
To this end, take $\xi,\eta\in\E^k$. By definition of the $C_0(X)$-structure on $\CP(\E)^0$ we have 
\[\xi\eta^*(x)=\xi(\theta^{-k}(x))\eta(\theta^{-k}(x))^*\]
for all $x\in D_k$, and $\xi\eta^*(x)=0$ for $x\notin D_k$. 

If $V$ is any finite dimensional complex vector space with normalised trace $\tr_V$ on $\End(V)$ and $v,w\in V$, then $w^*v=\tr_V(w^*v)=\dim(V)\tr_V(vw^*)$. Thus we have
\begin{align*}\tau_\mu(\xi\eta^*)&=\int_X \tr_{d^k}(\xi\eta^*(x))d\mu(x)=\int_X\tr_{d^k}(\xi(\theta^{-k}(x))\eta(\theta^{-k}(x))^*)d\mu\\&=\int_X d^k\tr_{d^k}(\eta(\theta^{-k}(x))^*\xi(\theta^{-k}(x))d\mu=\int_X d^k\tr_{d^k}(\eta^*\xi(\theta^{-k}(x))d\mu\\&=\int_X\tr_{d^k}(\eta^*\xi(x))d\mu=\tau_\mu(\eta^*\xi).
\end{align*}
In the same way one can show that $\tau_\mu(b_k^*a_k)=\tau_\mu(a_kb_k^*)$ for all $a_k=\xi a$ and $b_k=\eta b$ with $\xi,\eta\in\E^k$ and $a,b\in\CP(\E)^0$. This is in fact enough to prove that $\tau_\mu(b_k^*a_k)=\tau_\mu(a_kb_k^*)$ holds for arbitrary $a_k,b_k\in\CP(\E)^k$. 

Now take $a,b\in\CP(\E)$. We can approximate $a$ and $b$ by finite linear combinations $\sum_{k\in\Z}a_k$ and $\sum_{k\in\Z}b_k$, respectively, where $a_k,b_k\in\CP(\E)^k$. Then calculate
\begin{align*}
\tau(\left(\sum_{k\in\Z}a_k\right)\left(\sum_{l\in\Z}b_l\right))&=\sum_{k,l\in\Z}\tau(a_kb_l)=\sum_{k,l\in\Z}\tau_\mu(E(a_kb_l))=\sum_{k\in\Z}\tau_\mu(a_kb_{-k})=\sum_{k\in\Z}\tau_\mu(b_{-k}a_k)\\&=\tau(\left(\sum_{l\in\Z}b_l\right)\left(\sum_{l\in\Z}a_l\right))
\end{align*}
which shows that $\tau(ab)$ equals $\tau(ba)$. 

To conclude the proof we show that the image of $\rho\circ\pi$ is contained in $M_\theta^1(X,d)$. Let $\tau$ be a trace on $\CP(\E)$. By restricting $\tau$ to $C_0(X)\subset\CP(\E)$ we obtain a measure $\mu\in M^1(X)$ such that $\tau|_{C_0(X)}=\tau_\mu$. We have to show that $\mu$ is $d$-conformal with respect to $\theta$. Take $f\in C_0(V)$ and $\varepsilon>0$. We can find elements $\{\xi_{ik}:i=1,...,m;\; k=1,...,d\}$ of $\E$ such that  
\[\|\left(\sum_{i=1}^m\sum_{k=1}^d\xi_{ik}\xi_{ik}^*\right)f-f\|<\varepsilon/2,\quad\text{and}\quad \|\left(\sum_{i=1}^m\xi_{ik}^*\xi_{ik}\right)(f\circ\theta)- f\circ\theta\|<\varepsilon/2\]
for all $k\in\{1,...,d\}$. We have 
    \begin{align*}
    \tau\left(f\sum_{i=1}^m\sum_{k=1}^d\xi_{ik}\xi_{ik}^*\right)&=\sum_{i=1}^m\sum_{k=1}^d\tau(f\xi_{ik}\xi_{ik}^*)\\&=\sum_{i=1}^m\sum_{k=1}^d\tau(\xi_{ik}(f\circ\theta)\xi_{ik}^*)=\sum_{i=1}^m\sum_{k=1}^d\tau(\xi_{ik}^*\xi_{ik}(f\circ\theta))
    \end{align*}
    and hence
    \begin{align*}
    |\tau(f)-d\tau(f\circ\theta)|\leq |\tau(f)-\tau\left(f\sum_{i=1}^m\sum_{k=1}^d\xi_{ik}\xi_{ik}^*\right)|+|\sum_{i=1}^m\sum_{k=1}^d\tau(\xi_{ik}^*\xi_{ik}(f\circ\theta))-d\tau(f\circ\theta)|<\varepsilon
    \end{align*}
    which shows that $\mu$ is $d$-conformal with respect to $\theta$.

	That $\rho\circ\pi$ is an affine homeomorphism from $T(\CP(\E))$ to $M_\theta^1(X,d)$, and that the trace associated to a conformal measure has the form given in the statement, now follows from Proposition \ref{prop:traces fp algebra} and Lemma \ref{lem:restricting traces}. 
\end{proof}

\begin{remark}
For the case $d=1$, meaning that $\V$ is a line bundle, Proposition \ref{prop:traces cp algebra} yields \cite[Proposition 4.23]{partI} as a special case. 
\end{remark}
\begin{remark}\label{rem:nonconstant rank}
The assumption in Proposition \ref{prop:traces cp algebra} that the vector bundle $\V$ has constant rank $d\in\N$ is actually unnecessary. In general the rank will only be constant on the connected components of $U$. To generalize the proposition, write $D$ for the function sending $x\in U$ to the rank $d(x)$. Then a $D$-conformal probability measure is a measure $\mu\in M^1(X)$ such that 
\[\mu(\theta(Y))=\int_YD(x)d\mu(x)\]
for all measurable $Y\subset U$. In Proposition \ref{prop:conformal measures} one has to replace the assumption $d>1$ by the existence of an $x\in X$ such that $D(x)>1$. The only change in the statement of the proposition is that instead of $\mu(D_{\gl})=0$, we have \[\mu(D_{\gl}\cap D^{-1}(k))=0\quad\text{for all }k>1.\]
However, $\mu(D_{\gl})=0$ holds for any $\mu\in M_\theta^1(X,D)$ if we additionally assume that $\theta$ is minimal.

 Proposition \ref{prop:traces cp algebra} remains true if we replace $d$-conformal with $D$-conformal probability measures. 
\end{remark}
\begin{remark}
If the vector bundle $\V$ is the trivial bundle of rank $d$, then $\CP(\Gamma_0(\V,\theta))$ is isomorphic to the $C^*$-algebra associated to a topological graph with vertex space $E^0=X$ and edge space $E^1=U\times\{1,...,d\}$. The range and source map are given by
\[r(x,k)=\theta(x)\quad\text{and}\quad s(x,k)=x,\]
respectively. A $d$-conformal measure with respect to $\theta$ is then the same as an invariant measure in the sense of \cite[Definition 4.1]{Schafhauser:2016}. In that case, Proposition \ref{prop:traces cp algebra} follows from the main result of \cite{Schafhauser:2016}.
\end{remark}

\section{Classifiability of the Cuntz--Pimsner algebra}\label{sect:classifiability of the Cuntz--Pimsner algebra}
In this section we show the main result of this paper, namely that the Cuntz--Pimsner algebras associated to partial automorphisms twisted by vector bundles are classifiable (see Section \ref{sect:classification program}) if the space is compact, infinite and has finite covering dimension, and the partial automorphism is minimal. 

\subsection{Decomposition rank for actions with finitely supported domains}\label{subsect: rshd of cp algebras}
Let $X$ be a locally compact second countable Hausdorff space, $\theta:U\to V$ be a partial automorphism on $X$, $\V$ be a vector bundle over $U$, and let $\E\coloneqq\Gamma_0(\V,\theta)$ be the associated $C^*$-correspondence. 

In this section we first show that if the orbit space $X/\Z$ of $\theta$ is locally compact and metrizable and the quotient map is proper, then the Cuntz--Pimsner algebra $\CP(\E)$ is a $C_0(X/\Z)$-algebra. We then verify that this is indeed the case if all orbits of the partial automorphism have the same length $N$. Using the $C_0(X/\Z)$-algebra structure in this case, we are able to prove that $\CP(\E)$ is subhomogeneous and has decomposition rank bounded by $\dim(X)$. This follows ideas from \cite{Hirshberg2017} in the case of global group actions with bounded orbits. Employing techniques from \cite{Geffen2021} we can extend the result to partial automorphisms with finitely supported domains, meaning that $D_N=\emptyset$ for some $N\in\N$. 

 Abusing notation we use $\orb(x)$ to denote both the orbit of $x$ as a subset of $X$ and the corresponding point in $X/\Z$. For the quotient map onto the orbit space we write $\pi:X\to X/\Z$, and $\pi^*$ denotes the induced map from $C_0(X/\Z)$ to $C_0(X)$. We require $\pi$ to be proper so that $\pi^*$ indeed maps into $C_0(X)$.

\begin{lemma}\label{lem:density functions on orbit space}
    Assume that the orbit space $X/\Z$ of $\theta$ is locally compact metrizable and that the quotient map $\pi:X\to X/\Z$ is proper. Then $\pi^*(C_0(X/\Z))C_0(X)$ is dense in $C_0(X)$. Also, $\pi^*(C_0(X/\Z\backslash\orb(x)))C_0(X)$ is dense in $C_0(X\backslash\orb(x))$ for every $x\in X$.
\end{lemma}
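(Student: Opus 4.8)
The plan is to establish the first density statement by a standard Stone--Weierstrass / functional-calculus argument, and then to deduce the second statement by essentially the same argument applied to the ``closed subset'' $X \backslash \orb(x)$, being careful about local compactness. For the first statement, note that $\pi^*(C_0(X/\Z)) C_0(X)$ is a $C_0(X)$-submodule of $C_0(X)$ (indeed an ideal, since $\pi^*(C_0(X/\Z))$ sits inside $C_b(X)$ and multiplication by elements of $C_0(X)$ stays in $C_0(X)$). By the standard description of ideals of $C_0(X)$, its closure equals $C_0(W)$ for the open set $W = \{x \in X : g(x) \neq 0 \text{ for some } g \in \pi^*(C_0(X/\Z)) C_0(X)\}$, so it suffices to show $W = X$, i.e.\ that $\pi^*(C_0(X/\Z))$ does not vanish at any point. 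Given $x \in X$, local compactness of $X/\Z$ lets me pick $g \in C_0(X/\Z)$ with $g(\orb(x)) = 1$; then $\pi^*(g)(x) = g(\pi(x)) = 1 \neq 0$, and multiplying by an $h \in C_0(X)$ with $h(x) \neq 0$ witnesses $x \in W$. This gives $W = X$ and hence density.

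For the second statement, the key observation is that properness of $\pi$ forces the orbit $\orb(x)$ to be closed in $X$ (the preimage of the closed point $\{\orb(x)\}$ under the proper, hence closed, map $\pi$), so $X \backslash \orb(x)$ is a genuine open, hence locally compact Hausdorff, subspace, and $C_0(X \backslash \orb(x))$ makes sense and coincides with the ideal $C_0(X; \orb(x)) = \{f \in C_0(X) : f|_{\orb(x)} = 0\}$ of $C_0(X)$. Similarly $X/\Z \backslash \orb(x)$ is open in $X/\Z$ (complement of a point in a Hausdorff space) and its restriction of $\pi$, call it $\pi_x \colon X \backslash \orb(x) \to X/\Z \backslash \orb(x)$, is again proper and is the quotient map for the restricted partial automorphism. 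Then I apply exactly the same reasoning as in the first paragraph, now over the locally compact metrizable space $X/\Z \backslash \orb(x)$: $\pi_x^*(C_0(X/\Z\backslash\orb(x))) C_0(X\backslash\orb(x))$ is an ideal of $C_0(X\backslash\orb(x))$ whose associated open set is all of $X \backslash \orb(x)$, because for any $y \in X \backslash \orb(x)$ one can separate the point $\pi(y) \neq \orb(x)$ from $\orb(x)$ and produce $g \in C_0(X/\Z \backslash \orb(x))$ with $\pi_x^*(g)(y) \neq 0$. One also checks that $\pi^*(C_0(X/\Z\backslash\orb(x)))C_0(X)$, as a subset of $C_0(X)$, has closure equal to $\pi_x^*(C_0(X/\Z\backslash\orb(x)))C_0(X\backslash\orb(x))$ viewed inside $C_0(X\backslash\orb(x)) \subseteq C_0(X)$ --- this is immediate since every function in $C_0(X/\Z\backslash\orb(x))$ extends by zero to $C_0(X/\Z)$ and $\orb(x) = \pi^{-1}(\orb(x))$ means such extensions vanish on $\orb(x)$.

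The only genuine subtlety --- and the one place I would slow down --- is the interplay between ``vanishing at infinity'' on $X$ versus on $X/\Z$ and the role of properness: properness of $\pi$ is exactly what makes $\pi^*$ land in $C_0(X)$ (rather than merely $C_b(X)$) and what makes $\orb(x)$ closed, and I would state these two consequences explicitly as the load-bearing facts. Everything else is the routine identification of closed submodules/ideals of a commutative $C^*$-algebra with closed subsets of its spectrum, plus a one-line local-compactness bump-function argument to show the relevant open set is the whole space. I do not expect to need metrizability for either statement --- local compactness and Hausdorffness suffice --- but it is harmless to keep the hypothesis as stated since it is used elsewhere in the section.
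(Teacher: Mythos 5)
Your argument is correct, but it takes a somewhat different route from the paper. The paper proves the first density statement by a direct norm estimate: given $f\in C_0(X)$ and $\varepsilon>0$, it picks a compact $K$ outside which $|f|<\varepsilon$, uses normality of the metrizable orbit space (Tietze) to produce $h\in C_0(X/\Z)$ equal to $1$ on $\pi(K)$, and checks $\|f-\pi^*(h)f\|<\varepsilon$; the second statement is then dispatched by ``replace $X$ with $X\backslash\orb(x)$.'' You instead observe that the closure of $\pi^*(C_0(X/\Z))C_0(X)$ is a closed ideal of $C_0(X)$, hence of the form $C_0(W)$ for an open $W\subseteq X$, and reduce density to the pointwise non-vanishing of $\pi^*(C_0(X/\Z))$, which follows from a single-point bump function on the locally compact Hausdorff space $X/\Z$. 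Your approach avoids the $\varepsilon$-bookkeeping, makes explicit why the second statement is legitimately a special case of the first (closedness of $\orb(x)$, properness of the restricted quotient map, extension by zero), and correctly notes that metrizability is not actually needed---local compactness and Hausdorffness of $X/\Z$ suffice, whereas the paper invokes metrizability only to get normality for Tietze. One small slip: $\orb(x)=\pi^{-1}(\{\orb(x)\})$ is closed simply because $\pi$ is continuous and points of the Hausdorff space $X/\Z$ are closed; properness (and closedness of $\pi$ as a map) is irrelevant for preimages, so your parenthetical justification there is misattributed, though the fact itself is true and the rest of the argument is unaffected.
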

\begin{proof}
    Take a function $f$ in $C_0(X)$ and $\varepsilon>0$. There exists a compact subset $K$ of $X$ such that $|f|$ is bounded by $\varepsilon$ outside of $K$. The image $\pi(K)$ is a compact set in the orbit space $X/\Z$. Since $X/\Z$ is metrizable it is normal, and we can apply Tietze's extension theorem. We obtain a function $h$ in $C_0(X/\Z)$ which is $1$ on $\pi(K)$, and bounded by $1$ outside of $\pi(K)$. This means that $\pi^*(h)$ is $1$ on $K$, and bounded by $1$ outside of $K$. Therefore $||f-\pi^*(h)f||<\varepsilon$. This shows that $\pi^*(C_0(X/\Z))C_0(X)$ is dense in $C_0(X)$. The second statement can be viewed as a special case of the first one by replacing $X$ with $X\backslash\orb(x)$.
\end{proof}

We recall from \cite{partI} how to restrict the $C^*$-correspondence $\E\coloneqq\Gamma_0(\V,\theta)$ to subsets of $X$ invariant under the action.
\begin{definition}\label{def:restriction}[{\cite[Definition 4.10]{partI}}]
Let $W$ be a locally closed, $\theta$-invariant subset of $X$. Denote by $\theta_W:U\cap W\to V\cap W$ the partial automorphism on $W$ obtained by restricting $\theta$. We write $\E_W$ for the $C_0(W)$-correspondence $\Gamma_0(\V|_{W},\theta_W)$. 
\end{definition}

\begin{proposition}\label{prop:fibered structure on cp algebra}
    Assume that the orbit space $X/\Z$ of $\theta$ is locally compact metrizable and that the quotient map $\pi:X\to X/\Z$ is proper. Then the Cuntz--Pimsner algebra $\CP(\E)$ is a $C_0(X/\Z)$-algebra. The fiber at a point $\orb(x)\in X/\Z$ is given by $\CP(\E_{\orb(x)})$. 
\end{proposition}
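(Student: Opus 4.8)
The plan is to establish the $C_0(X/\Z)$-algebra structure on $\CP(\E)$ by exhibiting the structure map and identifying the fibers via restriction. First I would define the $\ast$-homomorphism $\phi: C_0(X/\Z)\to \zentrum(M(\CP(\E)))$ as the composition $\phi = \pi_u\circ\pi^*$, where $\pi^*:C_0(X/\Z)\to C_0(X)$ is the proper pullback and $\pi_u: C_0(X)\to M(\CP(\E))$ is the (injective, nondegenerate) universal representation. One must check that the image lands in the center of the multiplier algebra: since $C_0(X/\Z)$ consists of functions constant on orbits, the element $\pi^*(h)$ commutes with the left action on the correspondence in the appropriate way; concretely, for $h\in C_0(X/\Z)$ we have $(h\circ\pi)\circ\theta = h\circ\pi$ on $U$ (because $\pi\circ\theta = \pi$ where $\theta$ is defined), so $\varphi_\E(\pi^*h)$ agrees with right multiplication by $\pi^*h$ on $\E$, which means $\pi_u(\pi^*h)$ commutes with $t_u(\xi)$ for all $\xi\in\E$ and with $\pi_u(f)$ for all $f\in C_0(X)$ (the latter using $\pi_u(C_0(X))$ is already central in $\CP(\E)^0$, but more directly $C_0(X)$ is commutative). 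Since $\CP(\E)$ is generated by $\pi_u(C_0(X))$ and $t_u(\E)$, this shows $\phi(C_0(X/\Z))\subset\zentrum(M(\CP(\E)))$. Nondegeneracy, i.e.\ density of $\phi(C_0(X/\Z))\CP(\E)$ in $\CP(\E)$, follows from Lemma~\ref{lem:density functions on orbit space}: $\pi^*(C_0(X/\Z))C_0(X)$ is dense in $C_0(X)$, hence $\phi(C_0(X/\Z))\pi_u(C_0(X))$ is dense in $\pi_u(C_0(X))$, and then $\phi(C_0(X/\Z))\pi_u(C_0(X))t_u(\E)\pi_u(C_0(X))$ is dense in $t_u(\E)$ by a standard argument, so $\phi(C_0(X/\Z))\CP(\E)$ is dense.

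Next I would identify the fiber over a point $\orb(x)\in X/\Z$, i.e.\ the quotient $\CP(\E)/I_{\orb(x)}$ where $I_{\orb(x)} = \overline{\phi(C_0(X/\Z\backslash\{\orb(x)\}))\CP(\E)}$. The key observation is that $\orb(x)\subset X$ is a $\theta$-invariant subset, and its preimage data matches: $\pi^*(C_0(X/\Z\backslash\{\orb(x)\})) C_0(X)$ is dense in $C_0(X\backslash\orb(x))$ by the second half of Lemma~\ref{lem:density functions on orbit space}. Therefore $I_{\orb(x)}$ coincides with the ideal $\overline{\pi_u(C_0(X\backslash\orb(x)))\CP(\E)}$, which is precisely the gauge-invariant ideal of $\CP(\E)$ corresponding to the $\theta$-invariant open set $X\backslash\orb(x)$. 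By the ideal structure of Cuntz--Pimsner algebras associated to these correspondences --- established in \cite{partI}, specifically the identification of gauge-invariant ideals with invariant open subsets and the fact that the quotient by such an ideal is the Cuntz--Pimsner algebra of the restricted correspondence --- the quotient $\CP(\E)/I_{\orb(x)}$ is isomorphic to $\CP(\E_{\orb(x)})$, the Cuntz--Pimsner algebra of $\E_{\orb(x)} = \Gamma_0(\V|_{\orb(x)},\theta_{\orb(x)})$ in the sense of Definition~\ref{def:restriction}. I would cite the relevant statements from \cite{partI} (around \cite[Corollary 4.17]{partI} and the surrounding ideal-lattice description) for this identification rather than reprove it.

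The main obstacle will be the careful handling of the correspondence between the two descriptions of the ideal $I_{\orb(x)}$ --- one coming from the $C_0(X/\Z)$-structure via functions on the orbit space, the other from the $C_0(X)$-structure via functions vanishing on the orbit --- and ensuring that the quotient map to the restricted Cuntz--Pimsner algebra is exactly the one provided by the functoriality results of \cite{partI}. In particular one needs that $\orb(x)$ is locally closed in $X$ (so that $C_0(X\backslash\orb(x))$ makes sense as an ideal and Definition~\ref{def:restriction} applies), which holds because $\pi$ is proper: the orbit $\orb(x) = \pi^{-1}(\{\orb(x)\})$ is closed when $X/\Z$ is Hausdorff, and $X/\Z$ being metrizable is in particular Hausdorff. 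I would also remark that one should verify $\pi\circ\theta = \pi$ holds on all of $U$ (not merely that orbits are preserved set-theoretically) to guarantee $(h\circ\pi)\circ\theta = h\circ\pi$ literally as functions on $U$; this is immediate from the definition of the orbit equivalence relation. With these points in place, the proposition follows by assembling the structure map, checking nondegeneracy, and invoking the ideal/quotient theory from \cite{partI} to identify the fibers.
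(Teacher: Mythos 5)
Your proposal is correct and follows essentially the same route as the paper: centrality of $\pi^*(C_0(X/\Z))$ via the observation that functions constant on orbits satisfy $f\xi=\xi f$ in the correspondence, nondegeneracy via Lemma \ref{lem:density functions on orbit space}, and identification of the fibers by matching the ideal $\overline{\pi^*(C_0(X/\Z\backslash\{\orb(x)\}))\CP(\E)}$ with $\overline{C_0(X\backslash\orb(x))\CP(\E)}$ and invoking the short exact sequence for invariant subsets from \cite[Proposition 4.11]{partI}. Your additional remark that closedness of $\orb(x)$ follows from properness of $\pi$ and Hausdorffness of $X/\Z$ is a point the paper leaves implicit, and is a welcome clarification.
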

\begin{proof}
    Functions in $\pi^*(C_0(X/\Z))$ are constant along $\theta$-orbits. Thus if $f$ lies in $\pi^*(C_0(X/\Z))$ then we have $f\xi=\xi f$ for all $\xi\in\E$. This implies $fa=af$ for all $a\in\CP(\E)$. Hence $f$ lies in the center of $\CP(\E)$, and therefore also in the center of the multiplier algebra of $\CP(\E)$.
    
    From Lemma \ref{lem:density functions on orbit space} we obtain that $\pi^*(C_0(X/\Z))\CP(\E)$ is dense in $\CP(\E)$, because $\CP(\E)=C_0(X)\CP(\E)$. In conclusion, $\pi^*$ defines a $C_0(X/\Z)$-structure on $\CP(\E)$.

    We want to compute the fiber at a point $\orb(x)\in X/\Z$, for some $x\in X$. By Lemma \ref{lem:density functions on orbit space} $\pi^*(C_0(X/\Z\backslash\orb(x)))C_0(X)$ is dense in $C_0(X\backslash\orb(x))$, and hence $\pi^*(C_0(X/\Z\backslash\orb(x)))\CP(\E)$ is equal to $C_0(X\backslash\orb(x))\CP(\E)$. Since $\orb(x)$ is closed and $\theta$-invariant, \cite[Proposition 4.11]{partI} yields the short exact sequence
\begin{align*}
    0\to\CP(\E_{X\backslash\orb(x)})\to\CP(\E)\to \CP(\E_{\orb(x)})\to 0.
\end{align*}
Additionally, we obtain from \cite[Proposition 4.11]{partI} that $C_0(X\backslash \orb(x))\CP(\E)$ is isomorphic to $\CP(\E_{X\backslash\orb(x)})$. Thus the fiber of $\CP(\E)$ at a point $\orb(x)$ is given by 
\begin{align*}
    \CP(\E)_{\orb(x)}&=\CP(\E)/(\pi^*(C_0(X/\Z\backslash\orb(x)))\CP(\E))=\CP(\E)/(C_0(X\backslash\orb(x))\CP(\E))\\&\cong\CP(\E)/\CP(\E_{X\backslash\orb(x)})\cong \CP(\E_{\orb(x)}).
\end{align*}
This concludes the proof.
\end{proof}
\begin{remark}
   That the quotient map onto the orbit space is proper was only assumed for convenience. If it is not, then we get an induced map $\pi^*$ from $C_0(X/\Z)$ into $C_b(X)$, the $C^*$-algebra of bounded continuous functions on $X$. This still defines a $C_0(X/\Z)$-structure on $\CP(\E)$.
\end{remark}

In the setting of global group actions, the orbit space is Hausdorff if the action has uniformly bounded orbits (see \cite[ Lemma 3.2]{Hirshberg2017}). However, for partial automorphisms even $D_N=\emptyset$ for some $N\in\N$ is not enough, as the following example shows:
\begin{example}
    Let $X=[0,1]\sqcup[2,3]$, $D_{-1}=(0,1]$, $D_1=(2,3]$, and $\theta:D_{-1}\to D_1$ defined as $\theta(x)=x+2$. Then $D_2=D_{-2}=\emptyset$. However, the points $\{0\}$ and $\{2\}$ cannot be separated by $\theta$-invariant neighborhoods. The same holds for their orbits, since $\orb(\{0\})=\{0\}$ and $\orb(\{2\})=\{2\}$. By the definition of the quotient topology, that means that the points $\orb(\{0\})$ and $\orb(\{2\})$ in $X/\Z$ cannot be separated by neighborhoods. Hence the orbit space is not Hausdorff.  
\end{example}

   Instead, we have to assume that $\theta$ has only orbits of one fixed size $N$. 

\begin{lemma}\label{lem: invariant neighborhood}
    Assume that all orbits of $\theta$ have length $N$ for some $N\in\N$, and none are periodic. Consider the orbit $\orb(x)$ of some point $x\in X$, and let $W\subset X$ be open with $\orb(x)\subset W$. There exists a $\theta$-invariant open neighborhood of $\orb(x)$ whose closure is compact and contained in $W$.
\end{lemma}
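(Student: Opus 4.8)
I want to produce a $\theta$-invariant open neighborhood of $\orb(x)$ with compact closure inside $W$. Write $\orb(x) = \{x_0, x_1, \dots, x_{N-1}\}$ with $x_{i+1} = \theta(x_i)$, where $x_0$ is the unique point of the orbit not in $V$ and $x_{N-1}$ is the unique point not in $U$ (this uses that the orbit has length exactly $N$ and is not periodic, so the orbit is a ``line segment'' rather than a cycle). First I would choose, for each $i$, an open neighborhood $W_i \subset W$ of $x_i$ with compact closure $\overline{W_i} \subset W$; this is possible since $X$ is locally compact Hausdorff. Then I would shrink these simultaneously so that they are matched up by $\theta$: the idea is to pull everything back to $x_0$.

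\textbf{Main construction.} Consider the set $O \coloneqq \bigcap_{i=0}^{N-1} \theta^{-i}(W_i)$, where $\theta^{-i}(W_i)$ means the preimage under $\theta^i$ of $W_i$ intersected with the domain $D_{-i}$; since $x_0 \in D_{-i}$ for all $i \le N-1$ (the orbit has full length $N$) and $\theta^i$ is continuous on $D_{-i}$, each $\theta^{-i}(W_i)$ is open in $D_{-i}$, hence open in $X$ as $D_{-i}$ is open. Moreover $x_0 \in O$, so $O$ is an open neighborhood of $x_0$. Now set
\[
Z \coloneqq \bigsqcup_{j=0}^{N-1} \theta^{j}(O),
\]
where $\theta^j(O) \subset D_j$. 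Each $\theta^j(O)$ is open (image of an open subset of $D_{-j}$ under the homeomorphism $\theta^j$), contains $x_j$, and is contained in $W_j \subset W$; in particular $\overline{\theta^j(O)} \subset \overline{W_j}$ is compact, so $\overline{Z} = \bigcup_j \overline{\theta^j(O)}$ is compact and contained in $W$. It remains to check $Z$ is $\theta$-invariant, i.e. $\theta(Z \cap U) \subset Z$ and $\theta^{-1}(Z \cap V) \subset Z$. Because all orbits have length $N$ and none are periodic, a point in $\theta^j(O)$ lies in $U$ iff $j < N-1$ (the top level $\theta^{N-1}(O) \subset D_{N-1} \setminus D_N$ consists of points not in $U$... here one must be slightly careful: $O$ should be shrunk so that $\theta^{N-1}(O)$ avoids $U$, which is possible since $x_{N-1} \notin U$ and $U$ is open, so replace $W_{N-1}$ by $W_{N-1} \setminus U$ at the outset; similarly replace $W_0$ by $W_0 \setminus V$). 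With this arrangement, $\theta$ maps $\theta^j(O)$ homeomorphically onto $\theta^{j+1}(O)$ for $j = 0, \dots, N-2$, and $Z \cap U = \bigsqcup_{j=0}^{N-2}\theta^j(O)$, so $\theta(Z\cap U) = \bigsqcup_{j=1}^{N-1}\theta^j(O) \subset Z$; the backward condition is symmetric. Finally, disjointness of the pieces $\theta^j(O)$ can be arranged by shrinking each $W_i$ further so that the $\overline{W_i}$ are pairwise disjoint (the $x_i$ are distinct points in a Hausdorff space), which is harmless.

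\textbf{Expected obstacle.} The only real subtlety — and the step I would be most careful about — is the bookkeeping at the two ends of the orbit: ensuring that the top piece $\theta^{N-1}(O)$ genuinely lies outside $U$ and the bottom piece $O$ lies outside $V$, so that the invariance conditions close up without ``leaking'' a point to a would-be level $N$ or level $-1$. This is exactly where the hypotheses that every orbit has length $N$ and no orbit is periodic get used: periodicity would force the ends to wrap around, and variable orbit length would mean $x_0$ might not lie in all of $D_{-1}, \dots, D_{-(N-1)}$, breaking the definition of $O$. Once the endpoint trimming ($W_0 \rightsquigarrow W_0\setminus V$, $W_{N-1}\rightsquigarrow W_{N-1}\setminus U$) and the pairwise-disjointness shrinking are in place, the rest is routine: openness, the neighborhood property, compact closure inside $W$, and invariance all follow directly from continuity of $\theta^{\pm 1}$ and the fact that $\theta^j$ is a homeomorphism from (an open subset of) $D_{-j}$ onto (an open subset of) $D_j$.
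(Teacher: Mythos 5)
Your construction is essentially the one in the paper: pull neighborhoods $W_i$ of the orbit points back to the bottom point $x_0$ to get an open set $O\subset D_{1-N}$, push it forward, and take $Z=\bigcup_{j=0}^{N-1}\theta^j(O)$. The one step that does not work as written is the ``endpoint trimming.'' You propose to replace $W_{N-1}$ by $W_{N-1}\setminus U$ (and $W_0$ by $W_0\setminus V$) in order to force $\theta^{N-1}(O)\cap U=\emptyset$. But $U$ is open, so $W_{N-1}\setminus U=W_{N-1}\cap(X\setminus U)$ is the intersection of an open set with a closed set; it is not open in general, and if $x_{N-1}$ lies on the boundary of $U$ it is not even a neighborhood of $x_{N-1}$. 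Your set $O=\bigcap_{i}\theta^{-i}(W_i)$ would then fail to be open, or fail to contain $x_0$ in its interior, and the construction collapses.

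The fix is that no trimming is needed: the hypothesis that every orbit has length exactly $N$ and none are periodic means $D_N=D_{-N}=\emptyset$, and this alone forces both endpoint conditions. Indeed $O\subset\theta^{-(N-1)}(W_{N-1})\subset D_{1-N}$, so a point of $O\cap D_1$ would have a $\theta$-preimage lying in $D_{-N}=\emptyset$; hence $O\cap V=\emptyset$. Likewise $\theta^{N-1}(O)\subset D_{N-1}$, so a point of $\theta^{N-1}(O)\cap D_{-1}$ would be mapped by $\theta$ into $D_N=\emptyset$; hence $\theta^{N-1}(O)\cap U=\emptyset$. This is exactly the observation the paper's proof makes (its set $A$ plays the role of your $O$). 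The pairwise-disjointness shrinking is likewise unnecessary---invariance and compact closure only require a union, not a disjoint union---though it is harmless. With the trimming deleted and replaced by the two observations above, your argument is correct and coincides with the paper's.
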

\begin{proof}
We can assume that $x$ lies in $D_{1-N}$. There exists an open neighborhood $A$ of $x$ whose closure is compact and contained in $D_{1-N}$, and such that $\theta^n(A)$ has compact closure which is contained in $W$ for all $n\in\{0,...,N-1\}$.  Since $A\subset D_{1-N}$, and $D_N=D_{-N}=\emptyset$, we have $A\cap D_1=\emptyset$ and $\theta^{N-1}(A)\cap D_{-1}=\emptyset$, whereas $\theta^l(A)\cap D_1=\theta^l(A)$ for all $l\in\{1,...,N-1\}$ and $\theta^l(A)\cap D_{-1}=\theta^l(A)$ for all $l\in\{0,...,N-2\}$. Hence 
\begin{align*}
    \theta(\bigcup_{l=0}^{N-1}\theta^l(A)\cap D_{-1})= \theta(\bigcup_{l=0}^{N-2}\theta^l(A))=\bigcup_{l=1}^{N-1}\theta^l(A)\subset \bigcup_{l=0}^{N-1}\theta^l(A)\quad\text{and}\\
     \theta^{-1}(\bigcup_{l=0}^{N-1}\theta^l(A)\cap D_{1})= \theta^{-1}(\bigcup_{l=1}^{N-1}\theta^l(A))=\bigcup_{l=0}^{N-2}\theta^l(A)\subset \bigcup_{l=0}^{N-1}\theta^l(A). 
\end{align*}
Thus $\bigcup_{l=0}^{N-1}\theta^l(A)$ is an open, $\theta$-invariant subset whose closure is compact and contained in $W$. 
\end{proof}

\begin{lemma}\label{lem:orbit space hausdorff}
Under the assumptions of Lemma \ref{lem: invariant neighborhood}, the orbit space $X/\Z$ is locally compact Hausdorff. 
\end{lemma}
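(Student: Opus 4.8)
The plan is to show that $X/\Z$ is locally compact and Hausdorff separately. Local compactness is essentially immediate from Lemma \ref{lem: invariant neighborhood}: given a point $\orb(x)$ in the orbit space, pick any open $W\supset\orb(x)$ in $X$ (for instance $W=X$), and the lemma produces a $\theta$-invariant open neighborhood $O$ of $\orb(x)$ with compact closure. Since $O$ is $\theta$-invariant, it is of the form $\pi^{-1}(\pi(O))$, so $\pi(O)$ is open in $X/\Z$; and $\pi(\overline{O})$ is compact and contains the closure of $\pi(O)$ in $X/\Z$ (using that $\pi$ is continuous and $\overline{O}$ compact), which gives a compact neighborhood of $\orb(x)$.

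For the Hausdorff property, I would take two distinct orbits $\orb(x)\neq\orb(y)$ in $X/\Z$. The two orbits are disjoint finite subsets of $X$, each consisting of $N$ points. Since $X$ is locally compact Hausdorff (in particular regular and $T_1$), and the orbits are finite hence closed and compact, I can separate them by disjoint open sets: choose open $W_1\supset\orb(x)$ and $W_2\supset\orb(y)$ with $W_1\cap W_2=\emptyset$ (or even with disjoint closures). Now apply Lemma \ref{lem: invariant neighborhood} twice: obtain a $\theta$-invariant open neighborhood $O_1$ of $\orb(x)$ contained in $W_1$, and a $\theta$-invariant open neighborhood $O_2$ of $\orb(y)$ contained in $W_2$. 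Then $O_1\cap O_2\subset W_1\cap W_2=\emptyset$. Since $O_1$ and $O_2$ are $\theta$-invariant and open, their images $\pi(O_1)$ and $\pi(O_2)$ are open in $X/\Z$, they are disjoint (because $O_i=\pi^{-1}(\pi(O_i))$, so $\pi(O_1)\cap\pi(O_2)=\emptyset$ would follow from $O_1\cap O_2=\emptyset$ once we note $\pi^{-1}(\pi(O_1)\cap\pi(O_2))=O_1\cap O_2$), and they contain $\orb(x)$ and $\orb(y)$ respectively. This separates the two points in the orbit space.

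The main technical point to be careful about is the interplay between $\theta$-invariant sets and the quotient map: one must check that a $\theta$-invariant open set $O$ satisfies $\pi^{-1}(\pi(O))=O$, so that $\pi(O)$ is genuinely open and that disjointness upstairs transfers to disjointness downstairs. This is where the hypothesis that all orbits have the same finite length $N$ and none are periodic really matters — it guarantees that the orbit of a point is exactly $\{\theta^l(z)\}$ for the appropriate range of $l$, with no collapsing, and that the $\theta$-invariant neighborhoods produced by Lemma \ref{lem: invariant neighborhood} are unions over full orbits. I do not expect a serious obstacle here; the only mild subtlety is ensuring that in Lemma \ref{lem: invariant neighborhood} we can start with the two separating open sets $W_1, W_2$ and still get invariant neighborhoods inside them, which is exactly what that lemma provides. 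Local compactness combined with Hausdorffness then gives the statement, and in fact (although not needed here) metrizability would follow from second countability of $X$ if one wanted to invoke Proposition \ref{prop:fibered structure on cp algebra}.
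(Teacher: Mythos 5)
Your proof is correct and follows essentially the same route as the paper: the paper's own argument just records that $\theta$-invariant sets are saturated (so $\pi$ maps invariant open sets to open sets) and then declares the rest "straightforward" from Lemma \ref{lem: invariant neighborhood}, which is exactly the detail you supply. Separating two disjoint (finite, hence compact) orbits by disjoint open sets and shrinking to invariant neighborhoods via Lemma \ref{lem: invariant neighborhood} is precisely the intended argument, and your local compactness step is likewise the standard one.
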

\begin{proof}
Observe that if $W$ is a $\theta$-invariant subset of $X$, then $W=\pi^{-1}(\pi(W))$. Hence it follows from the definition of the quotient topology that if $W$ is open or closed, then $\pi(W)$ is as well. It is now straightforward to show, using Lemma \ref{lem: invariant neighborhood}, that the orbit space is locally compact Hausdorff.  
\end{proof}
\begin{lemma}\label{lem:quotient map proper}
     Under the assumptions of Lemma \ref{lem: invariant neighborhood}, the quotient map $\pi:X\to X/\Z$ is proper and open. This implies that the orbit space is metrizable.
\end{lemma}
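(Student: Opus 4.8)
The plan is to prove that $\pi\colon X\to X/\Z$ is proper and open, and then deduce metrizability of the orbit space from the Urysohn metrization theorem together with Lemma \ref{lem:orbit space hausdorff}. Throughout we work under the standing assumptions of Lemma \ref{lem: invariant neighborhood}: every orbit has length $N$ and none is periodic, so in particular $D_N=D_{-N}=\emptyset$ and every orbit is a finite set of cardinality $N$.

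\emph{Openness.} This is already essentially contained in the proof of Lemma \ref{lem:orbit space hausdorff}: if $W\subset X$ is open, then the saturation $\pi^{-1}(\pi(W))=\bigcup_{n=1-N}^{N-1}\theta^n(W\cap D_{-n})$ is a finite union of homeomorphic images of open sets, hence open, and by the definition of the quotient topology this means $\pi(W)$ is open. So $\pi$ is an open map; I would state this and refer back to the argument already given.

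\emph{Properness.} Let $L\subset X/\Z$ be compact; I want $\pi^{-1}(L)$ compact. Since $\pi$ is open and surjective and $X$ is locally compact Hausdorff (hence so is $X/\Z$ by Lemma \ref{lem:orbit space hausdorff}), each point of $L$ has a preimage, and around each such preimage I can use Lemma \ref{lem: invariant neighborhood} to find a $\theta$-invariant open neighborhood with compact closure; its image under $\pi$ is an open neighborhood in $X/\Z$ of the corresponding orbit, and its (compact) preimage is that $\theta$-invariant open set's closure. Covering $L$ by finitely many such images $\pi(\overline{A_1}),\dots,\pi(\overline{A_k})$ — using compactness of $L$ — we get $\pi^{-1}(L)\subset\bigcup_{j=1}^k\overline{A_j}$, a finite union of compact sets, hence compact, so $\pi^{-1}(L)$ is a closed subset of a compact set and therefore compact. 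This shows $\pi$ is proper.

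\emph{Metrizability.} Now $X$ is second countable locally compact Hausdorff, hence $\sigma$-compact and separable. The orbit space $X/\Z$ is locally compact Hausdorff by Lemma \ref{lem:orbit space hausdorff}; it is separable as a continuous image of a separable space; and since $\pi$ is open, a countable base of $X$ maps to a countable base of $X/\Z$, so $X/\Z$ is second countable. A second countable locally compact Hausdorff space is metrizable by the Urysohn metrization theorem. This completes the proof. The only mild subtlety — the step I would be most careful with — is making sure the covering/compactness argument for properness is set up with $\theta$-invariant sets so that Lemma \ref{lem: invariant neighborhood} applies cleanly and the preimages one takes are exactly the closures produced there; everything else is routine point-set topology.
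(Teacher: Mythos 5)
Your proof is correct and follows essentially the same route as the paper: openness via the saturation $\bigcup_{n=1-N}^{N-1}\theta^n(W\cap D_{-n})$, and metrizability from second countability of the open continuous image together with Urysohn's metrization theorem applied to the locally compact Hausdorff orbit space from Lemma \ref{lem:orbit space hausdorff}. For properness the paper simply cites the argument of \cite[Lemma 3.2]{Hirshberg2017} together with Lemma \ref{lem: invariant neighborhood}, and your covering argument is exactly that argument written out; the only cosmetic fix is that the open cover of $L$ should be taken by the sets $\pi(A_j)$ rather than $\pi(\overline{A_j})$, which need not be open, after which $\pi^{-1}(L)\subset\bigcup_j A_j\subset\bigcup_j\overline{A_j}$ gives compactness as you say.
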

\begin{proof}
    That the quotient map is proper can be shown in the same way as in the proof of  \cite[Lemma 3.2]{Hirshberg2017}, using Lemma \ref{lem: invariant neighborhood}. 

    For openness, let $W$ be an open subset of $X$. Then the set $\Tilde{W}\coloneqq\bigcup_{n=-N+1}^{N-1} \theta^n(W\cap D_{-n})$ is open, $\theta$-invariant and has the same image in the orbit space as $W$. Thus $\pi(W)=\pi(\Tilde{W})$ is open. 

    A continuous open image of a second countable space is second countable. By Lemma \ref{lem:orbit space hausdorff} the orbit space is locally compact Hausdorff, and hence regular. We obtain from Urysohn's metrization theorem that the orbit space is metrizable. 
\end{proof}

\begin{definition}
    A $C^*$-algebra is called \emph{$n$-homogeneous} or \emph{$n$-subhomogeneous} if all its irreducible representations have dimension $n$ or at most $n$, respectively. It is called \emph{homogeneous} or \emph{subhomogeneous} if there exists an $n$ such that it is $n$-homogeneous or $n$-subhomogeneous, respectively.
\end{definition}
\begin{lemma}\label{lem:finite space}
    Let $N>1$. If $X=\{x_1,...,x_N\}$ is finite and $\theta:\{x_1,...,x_{N-1}\}\to \{x_2,...,x_N\}$ maps $x_k$ to $x_{k+1}$ for all $k=1,...,N-1$, then $\CP(\Gamma(\V,\theta)))\cong M_l(\C)$ for some $l\in\N$. 
\end{lemma}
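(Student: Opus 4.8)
The idea is to compute $\CP(\Gamma(\V,\theta))$ directly from the structure of the correspondence over the finite set $X=\{x_1,\dots,x_N\}$. Since $X$ is finite, $C(X)\cong\C^N$, the vector bundle $\V$ over $U=\{x_1,\dots,x_{N-1}\}$ is just a choice of vector spaces $V_k:=p^{-1}(x_k)\cong\C^d$ for $k=1,\dots,N-1$, and $\Gamma(\V)=\bigoplus_{k=1}^{N-1}V_k$ as a Hilbert $\C^N$-module, with the inner product diagonal and the left action $\varphi(f)\xi=\xi(f\circ\theta)$ shifting the index by one. Concretely, $\varphi$ sends the minimal projection $e_k\in\C^N$ (supported at $x_k$) to the projection onto $V_{k-1}$ inside $\Gamma(\V)$ for $k=2,\dots,N$, and $\varphi(e_1)=0$. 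Thus $\ker\varphi=\C e_1$ and $(\ker\varphi)^\perp=\C e_2\oplus\cdots\oplus\C e_N$, while $\varphi^{-1}(\K(\E))=\C^N$ (everything is finite-dimensional, so the image is automatically in $\K$); hence the Katsura ideal is $J_\E=\C e_2\oplus\cdots\oplus\C e_N$.

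\textbf{Key steps.} First I would identify $\CP(\E)$ with a universal $C^*$-algebra on generators $\pi(f)$ for $f\in\C^N$ and $t(\xi)$ for $\xi\in\Gamma(\V)$, and then simplify: writing $p_k=\pi(e_k)$ for the $N$ orthogonal projections summing to $1$, and picking an orthonormal basis $\xi_k^{(1)},\dots,\xi_k^{(d)}$ of each fiber $V_k$, the relations force $t(\xi_k^{(i)})=p_{k+1}t(\xi_k^{(i)})p_k$, $t(\xi_k^{(i)})^*t(\xi_l^{(j)})=\delta_{kl}\delta_{ij}p_k$, and — crucially — the covariance relation $\psi_t(\varphi(e_k))=\pi(e_k)$ for $k\in\{2,\dots,N\}$ (since $e_k\in J_\E$), which reads $\sum_{i=1}^d t(\xi_{k-1}^{(i)})t(\xi_{k-1}^{(i)})^*=p_k$ for $k=2,\dots,N$. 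So $p_k$ is a sum of $d$ orthogonal range projections of isometry-like partial isometries mapping $p_{k-1}$-space to $p_k$-space. Iterating from $k=N$ down to $2$ shows $p_k$ has "rank" $d^{N-k}$ relative to $p_1$: precisely, the words $t(\xi_{N-1}^{(i_{N-1})})t(\xi_{N-2}^{(i_{N-2})})\cdots t(\xi_k^{(i_k)})$ for multi-indices $(i_k,\dots,i_{N-1})\in\{1,\dots,d\}^{N-k}$ are partial isometries with orthogonal ranges summing to $p_N$ and all with source projection $p_k$, while $p_1$ itself is a subprojection of... here one must be careful: $e_1\notin J_\E$, so there is no relation expressing $p_1$ as a sum of smaller projections, and $p_1$ is a minimal projection not hit by any $t(\xi)^*$. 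I would then argue that the system of partial isometries $\{p_k\}_{k=1}^N$ together with the generating partial isometries gives an explicit set of matrix units for a single full matrix algebra: set $l=\sum_{k=1}^N d^{\,?}$ counting dimensions of the "columns" — more carefully, the Hilbert space picture is that $p_N$ dominates $d^{N-1}$ copies of $p_1$ through words of length $N-1$, $d^{N-2}$ copies through words of length $N-2$ starting at $p_2$, etc., but these live in the same algebra. The cleanest route is to build an explicit $*$-isomorphism $\CP(\E)\to M_l(\C)$ by exhibiting matrix units: I would let $l=1+d+d^2+\cdots+d^{N-1}=\frac{d^N-1}{d-1}$ (when $d\geq 2$; $l=N$ when $d=1$), organize a basis of $\C^l$ as a forest/tree of depth $N-1$ with branching $d$ (level $0$ = root corresponding to $p_1$, ..., level $N-1$ corresponding to $p_N$), and check that the generators act as the obvious "move one level up the tree" operators, which by universality of $\CP(\E)$ gives a surjection $\CP(\E)\twoheadrightarrow M_l(\C)$; injectivity (hence an isomorphism) then follows from the gauge-invariant uniqueness theorem (Katsura \cite{Katsura:2004}) since the candidate representation is visibly faithful on $\pi(\C^N)$ and admits a gauge action (rotate the tree-levels by $z^{\mathrm{level}}$). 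Alternatively, since $X$ is finite and $D_{\gl}=D_+=D_-=\emptyset$, $D_{\fin}=X$, one can identify $\CP(\E)$ with the Cuntz--Pimsner algebra over a finite correspondence with no cycles, for which it is classical that the algebra is finite-dimensional — but I would still need the precise matrix-algebra (not just finite-dimensional) conclusion, which forces the combinatorial count above.

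\textbf{Main obstacle.} The delicate point is establishing that $\CP(\E)$ is a \emph{single} matrix block rather than a direct sum of several — i.e.\ that it is simple / that the tree picture is connected. This comes down to the fact that $p_1$ (the unique projection not in the range-projection picture) is "the bottom of every chain": every word eventually composes with $t(\xi_1^{(i)})$ on the right, so $p_N M_l(\C) p_N$ is connected to $p_1 M_l(\C) p_1$, and since $1=\sum p_k$ and each $p_k$ is connected to $p_{k-1}$, all the $p_k$ are Murray--von Neumann subequivalent/equivalent within one block; one must also verify there are no "leftover" projections orthogonal to everything, which follows from $\sum_{k} p_k = \pi(1_{\C^N}) = 1_{\CP(\E)}$ together with the covariance relations telling us how each $p_k$ decomposes. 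I expect the bulk of the work to be the bookkeeping that the explicit matrix units close up correctly and that the resulting representation is faithful via the gauge-invariant uniqueness theorem; the conceptual content is entirely contained in the observation that a finite correspondence with a single chain of length $N$ and fiber dimension $d$ produces the matrix algebra of size $1+d+\cdots+d^{N-1}$.
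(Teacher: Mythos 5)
Your proposal is correct in outline and can be completed, but it takes a genuinely different, much more computational route than the paper. The paper's proof is soft: since $X$ is finite, $C(X)$ and $\Gamma(\V,\theta)$ are finite-dimensional, hence so is each spectral subspace of $\CP(\Gamma(\V,\theta))$; since $D_N=\emptyset$, any product of $N$ or more sections vanishes, so all spectral subspaces with index of absolute value at least $N$ are zero and the Cuntz--Pimsner algebra is finite-dimensional; the restricted action is free and minimal, so the algebra is simple by \cite[Corollary 4.17]{partI}, and a finite-dimensional simple $C^*$-algebra is a full matrix algebra. Your route instead computes the correspondence explicitly as a finite acyclic graph-type correspondence, identifies the Katsura ideal $J_\E=\C e_2\oplus\cdots\oplus\C e_N$, extracts the Cuntz--Krieger-type relations $t(\xi_k^{(i)})^*t(\xi_k^{(j)})=\delta_{ij}p_k$ and $\sum_i t(\xi_{k-1}^{(i)})t(\xi_{k-1}^{(i)})^*=p_k$, and builds explicit matrix units indexed by a rooted tree of depth $N-1$ and branching $d$, with injectivity of the resulting surjection onto $M_l(\C)$ supplied by Katsura's gauge-invariant uniqueness theorem; this is sound (every matrix unit $e_{b,b'}$ arises as $s_\mu s_\nu^*$ with $\mu,\nu$ paths from the root, and the level-grading unitaries implement a compatible gauge action), and it buys strictly more than the lemma asserts, namely the explicit value $l=1+d+\cdots+d^{N-1}$ and the ranks $d^{k-1}$ of the $p_k$ (your count assumes constant rank $d$, which the lemma does not, but the lemma only claims some $l$). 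The ``single block'' difficulty you single out as the main obstacle is resolved automatically on either route: in yours by the explicit matrix units plus gauge-invariant uniqueness, in the paper's at one stroke by simplicity of the algebra coming from freeness and minimality of the action --- which is precisely the ingredient your alternative remark (``finite-dimensional is not enough'') was missing, and which lets the paper avoid the combinatorial bookkeeping entirely.
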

\begin{proof}
Since $X$ is finite, both $C(X)$ and $\Gamma(\V,\theta)$ are finite dimensional. Hence each spectral subspace $\CP(\Gamma(\V,\theta))^n$ is finite-dimensional. Clearly the $N$-th domain $D_N$ of $\theta$ is empty, and therefore multiplying $N$ or more sections in the Cuntz--Pimsner algebra always yields zero. In particular, all spectral subspaces $\CP(\Gamma(\V,\theta))^n$ for $n\geq N$ are trivial. Thus the Cuntz--Pimsner algebra is finite dimensional. The action is free and minimal, which by \cite[Corollary 4.17]{partI} implies that $\CP(\Gamma(\V,\theta)))$ is simple. Now \cite[Proposition 8.1.1]{Strung:2021} yields the statement. 
\end{proof}

\begin{proposition}\label{prop:periodic orbits of fixed length}
    If $X$ is compact and all orbits of $\theta$ have length $N$ for some $N\in\N$, and none are periodic, then $\CP(\Gamma_0(\V,\theta))$ is subhomogeneous and has decomposition rank bounded by $\dim(X)$.
\end{proposition}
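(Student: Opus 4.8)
The plan is to exploit the $C_0(X/\Z)$-algebra structure on $\CP(\E)$ provided by Proposition \ref{prop:fibered structure on cp algebra}, whose hypotheses are met here thanks to Lemmas \ref{lem:orbit space hausdorff} and \ref{lem:quotient map proper}. Since $X$ is compact, so is the orbit space $X/\Z$, and the fiber of $\CP(\E)$ over a point $\orb(x)$ is $\CP(\E_{\orb(x)})$. But $\orb(x)$ is a finite set of $N$ points on which $\theta$ acts by shifting, so by Lemma \ref{lem:finite space} this fiber is isomorphic to $M_l(\C)$ for some $l$. The rank $d$ of $\V$ is globally constant (or at least locally constant, and on a compact space takes finitely many values), so the dimension $l=l(\orb(x))$ is bounded; in fact $l$ divides $d^{N-1}$ times the dimension of the relevant summand, and is at most $N\cdot d^{N-1}$ (a concrete uniform bound will follow from tracking the spectral subspaces in the proof of Lemma \ref{lem:finite space}). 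Hence every irreducible representation of $\CP(\E)$ factors through some fiber $M_l(\C)$ with $l$ uniformly bounded, which shows $\CP(\E)$ is subhomogeneous.

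For the decomposition rank bound, I would invoke the permanence properties of $\dr$ for $C_0(Y)$-algebras whose fibers are finite-dimensional: if $Y$ is a locally compact metrizable space with $\dim Y = m$ and $A$ is a $C_0(Y)$-algebra all of whose fibers are finite-dimensional, then $\dr(A) \le m$. This is precisely the type of estimate used in \cite{Hirshberg2017} for homogeneous C*-algebras over a base space, and it applies verbatim once we know the base $X/\Z$ has covering dimension at most $\dim(X)$. So the remaining point is the dimension comparison $\dim(X/\Z) \le \dim(X)$. Here I would use that, by Lemma \ref{lem:quotient map proper}, the quotient map $\pi$ is proper, open, and surjective, with fibers of constant finite cardinality $N$; a proper open surjection with finite fibers does not raise covering dimension (one can cover $X/\Z$ by open sets each of which lifts, via Lemma \ref{lem: invariant neighborhood}, to a $\theta$-invariant open set that is a disjoint union of $N$ homeomorphic copies of an open subset of $X$, and then push forward a refinement of a cover of $X$).

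Putting these together: $\CP(\E)$ is a $C_0(X/\Z)$-algebra with finite-dimensional fibers over a locally compact metrizable space of dimension at most $\dim(X)$, hence $\dr(\CP(\E)) \le \dim(X)$; and the uniform bound on the fiber dimensions gives subhomogeneity.

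\medskip

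The step I expect to be the main obstacle is making the passage ``finite-dimensional fibers over an $m$-dimensional base $\Rightarrow$ $\dr \le m$'' fully rigorous in this setting. For genuinely homogeneous algebras (all fibers $M_k$ for a single $k$) this is classical, but here the fiber dimensions $l(\orb(x))$ jump as $\orb(x)$ moves — both because the rank of $\V$ can jump between components and because $D_j \setminus D_{j+1}$ stratifies $X$, so some orbits ``fully'' iterate and others are truncated. One must therefore either cite a sufficiently general form of the decomposition-rank estimate for subhomogeneous $C_0(Y)$-algebras (e.g. via the recursive subhomogeneous / $m$-decomposable structure, or the results on $\dr$ of $C(Y)\otimes F$-type building blocks), or argue directly by decomposing $X/\Z$ into the finitely many locally closed pieces on which the fiber is a fixed matrix algebra and handling each via the homogeneous case, taking care that the decomposition rank of an extension-type assembly over these pieces is still controlled by $\dim(X)$. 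A secondary, more bookkeeping-type obstacle is extracting the explicit uniform bound on $l$ from Lemma \ref{lem:finite space}, which the statement there leaves implicit; tracing through the spectral subspaces $\CP(\Gamma(\V,\theta))^n$ for $0 \le n \le N-1$ and their dimensions should give $l \le \sum_{n=0}^{N-1} d^{?}$ and hence a bound depending only on $N$ and $d$.
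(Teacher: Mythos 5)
Your proposal is correct and follows essentially the same route as the paper: the $C(X/\Z)$-structure from Proposition \ref{prop:fibered structure on cp algebra} with matrix-algebra fibers from Lemma \ref{lem:finite space} gives subhomogeneity (with primitive ideal space $X/\Z$, since the fibers are simple), and the decomposition rank bound is read off from the dimension of the base. The ``main obstacle'' you flag is resolved in the paper simply by citing the main result of \cite{Winter2004} on the decomposition rank of separable subhomogeneous $C^*$-algebras, together with a dimension-theoretic comparison of $\dim(X/\Z)$ and $\dim(X)$ from \cite{Pears1975} --- exactly the ``sufficiently general form of the estimate'' you anticipated needing.
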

\begin{proof}
    By Proposition \ref{prop:fibered structure on cp algebra}, $\CP(\E)$ is a $C(X/\Z)$-algebra with fibers $\CP(\E)_{\orb(x)}$ isomorphic to $\CP(\E_{\orb(x)})$. By Lemma \ref{lem:finite space}, the fibers are matrix algebras. Thus $\CP(\E)$ is subhomogeneous with primitive ideal space homeomorphic to $X/\Z$. The main result of \cite{Winter2004} together with \cite[Proposition 2.16, Chapter 9]{Pears1975} yields that its decomposition rank is bounded by $\dim(X)$. 
\end{proof}

\begin{theorem}\label{thm:finitely supported domains rsh}
If $X$ is a compact second countable Hausdorff space, $\theta:D_{-1}\to D_1$ is a partial automorphism on $X$ such that $D_{N}=D_{-N}=\emptyset$ for some $N\geq1$, and $\V$ is a vector bundle over $D_{-1}$, then the corresponding Cuntz--Pimsner algebra $\CP(\Gamma_0(\V,\theta))$ is subhomogeneous with decomposition rank at most $\dim(X)$. 
\end{theorem}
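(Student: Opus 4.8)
The plan is to reduce to Proposition \ref{prop:periodic orbits of fixed length} by stratifying $X$ according to orbit length. Since $D_{N}=D_{-N}=\emptyset$, every orbit has at most $N$ elements and none is periodic (because $D_{\gl}=\bigcap_{n\in\Z}D_n$ is empty). For $1\le k\le N$ let $X_k$ consist of the points $x\in X$ whose orbit $\orb(x)$ has at least $k$ elements. Using the basic identities for the domains $D_n$ (which in particular give $D_0\supseteq D_1\supseteq\cdots$ and $D_0\supseteq D_{-1}\supseteq\cdots$) one verifies that
\[X_k=\bigcup_{\substack{i,j\ge 0\\ i+j=k-1}}\bigl(D_i\cap D_{-j}\bigr),\]
so $X_k$ is open; and $X_k$ is $\theta$-invariant since the cardinality of an orbit is constant along it. This gives a decreasing chain of open invariant sets $\emptyset=X_{N+1}\subseteq X_N\subseteq\cdots\subseteq X_1=X$, whose successive differences $Y_k\coloneqq X_k\setminus X_{k+1}$ are the locally closed, $\theta$-invariant sets of points whose orbit has exactly $k$ elements. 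Each $Y_k$, as a locally closed subspace of the compact metrizable space $X$, is locally compact, second countable and Hausdorff, and $\theta_{Y_k}$ has all orbits of length exactly $k$ with none periodic.

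Transporting this to $\CP(\E)$: by \cite[Proposition 4.11]{partI} the open invariant set $X_k$ determines an ideal $J_k\cong\CP(\E_{X_k})$, and these ideals are nested with $J_{N+1}=0$ and $J_1=\CP(\E)$; since $X_{k+1}$ is open and invariant inside $X_k$ with complement $Y_k$, the same result gives short exact sequences
\[0\longrightarrow J_{k+1}\longrightarrow J_k\longrightarrow \CP(\E_{Y_k})\longrightarrow 0.\]
I would then apply Proposition \ref{prop:periodic orbits of fixed length} to each $Y_k$. That proposition is stated for a compact base, but its proof uses only local compactness: the lemmas establishing that the orbit space is locally compact, Hausdorff, metrizable with proper open quotient map (Lemmas \ref{lem: invariant neighborhood}--\ref{lem:quotient map proper}) all go through for locally compact second countable Hausdorff spaces. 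Hence $\CP(\E_{Y_k})$ is subhomogeneous with primitive ideal space the orbit space $Y_k/\Z$, and its decomposition rank is at most $\dim(Y_k/\Z)\le\dim(Y_k)\le\dim(X)$ (using \cite[Proposition 2.16, Chapter 9]{Pears1975} for the proper open finite-to-one quotient $Y_k\to Y_k/\Z$, and that covering dimension does not grow under passage to subspaces of a metrizable space).

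Since subhomogeneity passes to extensions, an induction along $0=J_{N+1}\subseteq\cdots\subseteq J_1$ shows $\CP(\E)$ is subhomogeneous, with primitive ideal space equal, as a set, to the disjoint union $\bigsqcup_{k=1}^{N}Y_k/\Z$ of finitely many locally closed Hausdorff strata, each of covering dimension at most $\dim X$. The remaining and most delicate point is to bound the decomposition rank: one cannot simply iterate $\dr(A)\le\dr(I)+\dr(A/I)+1$, as that yields a bound growing linearly in $N$. Instead, following the techniques of \cite{Geffen2021}, I would use the short exact sequences above, together with the homogeneous sub-strata of each $\CP(\E_{Y_k})$ (where the rank of $\V$, hence the fibre dimension, is locally constant), to assemble a recursive subhomogeneous decomposition of $\CP(\E)$ all of whose base spaces are closed subsets of the $Y_k/\Z$ --- hence of covering dimension at most $\dim X$ --- so that the main result of \cite{Winter2004} gives $\dr(\CP(\E))\le\dim X$. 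I expect the main obstacle to be exactly this assembly: organising the $N$ layers and their gluing data along closed subsets so that the decomposition rank stays bounded by $\dim X$, uniformly in $N$, rather than picking up a contribution from each layer.
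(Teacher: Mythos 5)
Your stratification is essentially the paper's: you partition $X$ by orbit length, apply Proposition \ref{prop:periodic orbits of fixed length} to each stratum of constant orbit length, and then try to propagate the decomposition rank bound through the resulting extensions. The set-up is correct (your open sets $X_k$ of points with orbit length at least $k$ are the complements of the paper's closed sets of points with orbit length at most $k-1$, and the subquotients $J_k/J_{k+1}\cong\CP(\E_{Y_k})$ are the same in both versions), and you rightly observe that iterating $\dr(A)\le\dr(I)+\dr(A/I)+1$ gives a bound growing in $N$, which is useless here.

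However, the proof as written has a genuine gap at exactly the point you flag: you never actually establish the uniform bound $\dr(\CP(\E))\le\dim(X)$, you only announce that you ``would assemble a recursive subhomogeneous decomposition'' and identify this assembly as the expected obstacle. The paper closes this step with a single citation: \cite[Lemma 4.3]{Geffen2021}, which says that if $0\to I\to A\to B\to 0$ is exact with $I$ homogeneous, $\dr(I)\le d$, and $B$ subhomogeneous with $\dr(B)\le d$, then $A$ is subhomogeneous with $\dr(A)\le d$. To apply it you must orient your exact sequences the other way: your sequences $0\to J_{k+1}\to J_k\to\CP(\E_{Y_k})\to 0$ put the homogeneous stratum in the \emph{quotient}, which is the wrong slot for that lemma (and for building an RSH decomposition, where homogeneous pieces are attached as ideals). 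Passing to the quotients $\CP(\E)/J_{k+1}\cong\CP(\E_{X\backslash X_{k+1}})$ instead yields
\[0\longrightarrow \CP(\E_{Y_k})\longrightarrow \CP(\E)/J_{k+1}\longrightarrow \CP(\E)/J_k\longrightarrow 0,\]
with homogeneous ideal and (inductively) subhomogeneous quotient, and the induction then runs without any contribution accumulating from the $N$ layers. (Alternatively, once subhomogeneity is known, Winter's theorem \cite{Winter2004} identifies $\dr(\CP(\E))$ with $\max_n\dim(\Prim_n(\CP(\E)))$, and each $\Prim_n$ is a finite union of locally closed pieces of the $Y_k/\Z$, each of dimension at most $\dim(X)$ by \cite{Pears1975}; but some such argument must actually be supplied rather than deferred.)
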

\begin{proof}
The proof is inspired by the proof of \cite[Theorem 4.4]{Geffen2021}. As usual, we write $\E\coloneqq\Gamma_0(\V,\theta)$. 

Define $X^{(k)}\coloneqq\{x\in X:|\orb(x)|=k\}$ and $X_k\coloneqq \bigcup_{l=1}^k X^{(l)}$ for $1\leq k\leq N$. The set $X_k$ consists of all points contained in orbits of length less or equal then $k$. Correspondingly a point $x\in X$ lies in the complement of $X_k$ if and only if it is contained in an orbit of length strictly greater than $k$, which is the case if and only if there exist $l,j\geq 0$ such that $l+j=k$ and $x$ lies in both $D_l$ and $D_{-j}$. Take $x\in X$ and $l,j\geq 0$ such that this holds. Then there is an open neighborhood $W$ of $x$ which is contained in both $D_l$ and $D_{-j}$, which implies that $W$ is contained in the complement of $X_k$. Hence $X_k$ is closed,  and therefore $X^{(k)}=X_k\backslash X_{k-1}$ is relatively open in $X_k$ for all $k\in\{1,...,N\}$. Furthermore it is clear that each $X^{(k)}$ is $\theta$-invariant. Thus \cite[Proposition 4.11]{partI} yields the short exact sequence 
\begin{align}\label{eq:short exact sequence}
0\to\CP(\E_{X^{(k)}})\to\CP(\E_{X_k})\to\CP(\E_{X_{k-1}})\to 0
\end{align}
for every $k\in\{2,...,N\}$. 

By Proposition \ref{prop:periodic orbits of fixed length} $\CP(\E_{X^{(k)}})$ is homogeneous and has decomposition rank bounded by $\dim(X)$. 

To finish the argument, notice that for $k=2$ the short exact sequence from Equation \ref{eq:short exact sequence} takes the form
\begin{align*}
0\to\CP(\E_{X^{(2)}})\to\CP(\E_{X_2})\to\CP(\E_{X^{(1)}})\to 0,
\end{align*}
and since the restriction of $\theta$ to $X^{(1)}=X\backslash(D_{-1}\cup D_1)$ has empty domains, we obtain $\CP(\E_{X^{(1)}})=C_0(X^{(1)})$. It is clear that $C_0(X^{(1)})$ is 1-homogeneous and by \cite[Theorem 17.2.4]{Strung:2021} has decomposition rank $\dr(C_0(X^{(1)}))\leq\dim(X^{(1)})\leq\dim(X)$, where for the last inequality we used Proposition 6.4 in Chapter 3 of \cite{Pears1975}. The statement now follows from repeatedly applying \cite[Lemma 4.3]{Geffen2021} to the short exact sequences from Equation (\ref{eq:short exact sequence}).
\end{proof}
\begin{corollary}\label{cor:decomposition rank}
Let the assumptions be the same as in Theorem \ref{thm:finitely supported domains rsh}, except that $X$ is only assumed to be locally compact instead of compact. Then $\CP(\Gamma_0(\V,\theta))$ is subhomogeneous with decomposition rank at most $\dim(X)$.
\end{corollary}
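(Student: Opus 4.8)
The plan is to deduce the statement from the compact case, Theorem \ref{thm:finitely supported domains rsh}, by passing to the one-point compactification. If $X$ is compact there is nothing to do, so assume $X$ is non-compact and set $X^{+}=X\cup\{\infty\}$; since $X$ is second countable and locally compact, $X^{+}$ is a second countable compact Hausdorff space. The map $\theta\colon D_{-1}\to D_{1}$ is verbatim a partial automorphism $\theta^{+}$ on $X^{+}$: the sets $D_{-1},D_{1}$ remain open in $X^{+}$, the point $\infty$ lies in the domain of no power, so $D_{n}(\theta^{+})=D_{n}(\theta)$ for all $n$ and in particular $D_{N}(\theta^{+})=D_{-N}(\theta^{+})=\emptyset$, and $\V$ is still a vector bundle over $D_{-1}$. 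Hence Theorem \ref{thm:finitely supported domains rsh} applies to $(X^{+},\theta^{+},\V)$, so $\CP(\Gamma_{0}(\V,\theta^{+}))$ is subhomogeneous with decomposition rank at most $\dim(X^{+})$.

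Next I would view $\CP(\Gamma_{0}(\V,\theta))$ as an ideal inside $\CP(\Gamma_{0}(\V,\theta^{+}))$. As $X$ is open and $\theta^{+}$-invariant in $X^{+}$, the restricted correspondence of Definition \ref{def:restriction} is $(\Gamma_{0}(\V,\theta^{+}))_{X}=\Gamma_{0}(\V,\theta)$, and \cite[Proposition 4.11]{partI} identifies $\CP(\Gamma_{0}(\V,\theta))$ with the ideal $C_{0}(X)\CP(\Gamma_{0}(\V,\theta^{+}))$, the quotient being $\CP$ of the zero correspondence over $C(\{\infty\})\cong\C$, hence just $\C$. Subhomogeneity passes to ideals, because every irreducible representation of an ideal extends to an irreducible representation of the ambient algebra on the same Hilbert space, and so has dimension bounded by the maximal dimension of an irreducible representation of $\CP(\Gamma_{0}(\V,\theta^{+}))$; likewise it is a standard permanence property that the decomposition rank does not increase under passage to ideals (indeed to hereditary subalgebras). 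Therefore $\CP(\Gamma_{0}(\V,\theta))$ is subhomogeneous with $\dr(\CP(\Gamma_{0}(\V,\theta)))\le\dim(X^{+})$.

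The remaining point, and the only place where a little care is needed, is to upgrade this to the bound $\dim(X^{+})=\dim(X)$ rather than the cruder $\dim(X^{+})\le\dim(X)+1$ that a naive subset decomposition would give. Since $X^{+}$ is second countable, compact and Hausdorff it is metrizable; choosing an increasing sequence $(K_{n})_{n}$ of compact subsets exhausting $X$, we may write $X^{+}=\{\infty\}\cup\bigcup_{n}K_{n}$ as a countable union of sets that are closed in $X^{+}$. The countable closed sum theorem for covering dimension (see \cite{Pears1975}) then gives $\dim(X^{+})=\max\{0,\sup_{n}\dim(K_{n})\}\le\dim(X)$, using subspace monotonicity of covering dimension for the last inequality, while $\dim(X)\le\dim(X^{+})$ is clear. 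Combining this with the previous paragraph yields $\dr(\CP(\Gamma_{0}(\V,\theta)))\le\dim(X)$, and subhomogeneity has already been established. Apart from this dimension bookkeeping -- ensuring that adjoining the point at infinity does not inflate the covering dimension -- the argument is a routine combination of the compact case with standard permanence properties, so I do not anticipate a serious obstacle.
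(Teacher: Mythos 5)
Your proposal is correct and follows essentially the same route as the paper: pass to the one-point compactification, extend $\theta$ trivially to $\theta^{+}$ with unchanged domains, apply Theorem \ref{thm:finitely supported domains rsh}, and then use \cite[Proposition 4.11]{partI} to realise $\CP(\Gamma_0(\V,\theta))$ as an ideal, invoking the permanence of subhomogeneity and decomposition rank under passage to ideals. The only difference is that you spell out the justification of $\dim(X^{+})=\dim(X)$ via the countable closed sum theorem, which the paper simply asserts.
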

\begin{proof}
We can use the same argument as in the proof of \cite[Corollary 4.6]{Geffen2021}: The one-point compactification $X^+$ of $X$ is a compact metrizable space. Define a partial automorphism $\theta^+$ on $X^+$ by setting $D_{-1}^+\coloneqq D_{-1}$, $D_{1}^+\coloneqq D_{1}$ and $\theta^+(x)=\theta(x)$ for all $x\in D_{-1}$. Since the domains of $\theta$ and $\theta^+$ are the same, we have $D_N^+=\emptyset$. Hence Theorem \ref{thm:finitely supported domains rsh} applies, and $\CP(\Gamma_0(\V,\theta^+))$ is subhomogeneous and has decomposition rank bounded by $\dim(X^+)=\dim(X)$. Since $X$ is an open, $\theta^+$-invariant subset of $X^+$, the Cuntz--Pimsner algebra $\CP(\Gamma_0(\V,\theta))$ is an ideal of $\CP(\Gamma_0(\V,\theta))$ by \cite[Proposition 4.11]{partI}, and hence it is subhomogeneous and has decomposition rank at most $\dim(X)$ as well. 
\end{proof}

\subsection{Nuclear dimension and classifiability}
The following theorem is the key step in showing that our Cuntz--Pimsner algebras are classifiable. 
\begin{theorem}\label{thm:bound on nuclear dimension}
    Let $\theta:U\to V$ be a partial automorphism on a locally compact second countable Hausdorff space $X$, $\V$ be a vector bundle over $U$, and $\E\coloneqq\Gamma_0(\V,\theta)$. Then
    \begin{align*}
        \dimnuc(\CP(\E))\leq\dimnuc(\CP(\E_{\Tilde{X}}))+2\dim(X)+2,
    \end{align*}
    where $\Tilde{X}\coloneqq\bigcap_{n\in\Z}\overline{D_n}$, and $\E_{\tilde{X}}$ is as in Definition \ref{def:restriction}. 
\end{theorem}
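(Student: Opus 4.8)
The strategy is to decompose $X$ into the ``global-ish'' part $\tilde X = \bigcap_{n\in\Z}\overline{D_n}$ and its complement, exploit the ideal/quotient short exact sequence from \cite[Proposition 4.11]{partI}, and combine a nuclear-dimension estimate for the complement (obtained from Corollary \ref{cor:decomposition rank} via a ``cutting'' argument) with the permanence properties of nuclear dimension under extensions. First I would note that $\tilde X$ is closed and $\theta$-invariant (it is an intersection of the closed invariant sets $\overline{D_n}$, since $\theta(\overline{D_n}\cap U)\subseteq \overline{D_{n+1}}$ and similarly for $\theta^{-1}$), so that $W \coloneqq X\setminus\tilde X$ is an open $\theta$-invariant subset and \cite[Proposition 4.11]{partI} gives a short exact sequence
\begin{align*}
0\to\CP(\E_{W})\to\CP(\E)\to\CP(\E_{\tilde X})\to 0.
\end{align*}
By the standard bound $\dimnuc(\CP(\E))\leq \dimnuc(\CP(\E_W))+\dimnuc(\CP(\E_{\tilde X}))+1$ for extensions (\cite{winterzacharias:2010}), it suffices to prove $\dimnuc(\CP(\E_W))\leq 2\dim(X)+1$.

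The heart of the argument is therefore the estimate for the open piece $W$. The key observation is that on $W$ no point has a two-sided infinite domain-orbit in the \emph{closure} sense; more precisely, every $x\in W$ fails to lie in some $\overline{D_n}$, and I would use this to cover $W$ by two open $\theta$-invariant subsets $W_+$ and $W_-$ on each of which $\theta$ has ``one-sided finitely supported'' behaviour in a suitable sense. Concretely, one can try $W_+ = \bigcup_{n\geq 0}(W\setminus\overline{D_n})$-type sets, arranged so that the restricted partial automorphism on each piece has the property that (the closure of) one of its half-domains eventually vanishes — bringing each piece into the scope of Corollary \ref{cor:decomposition rank}, which gives decomposition rank (hence nuclear dimension) at most $\dim(X)$ for partial automorphisms with $D_N = D_{-N} = \emptyset$. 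If $W$ is covered by two such invariant open sets $W_+, W_-$ with $W = W_+\cup W_-$, then writing $W_0 = W_+\cap W_-$ one has a pullback/Mayer--Vietoris-style decomposition of $\CP(\E_W)$ and the estimate $\dimnuc(\CP(\E_W))\leq \max\{\dimnuc(\CP(\E_{W_+})),\dimnuc(\CP(\E_{W_-}))\} + \dim(X) + 1 \leq 2\dim(X)+1$, using that $\dim$ of a subspace is at most $\dim(X)$ (\cite[Chapter 3]{Pears1975}) and the extension bound applied to $0\to\CP(\E_{W_0})\to\CP(\E_{W_+})\oplus\CP(\E_{W_-})\to\CP(\E_W)\to 0$ or a similar exact sequence built from \cite[Proposition 4.11]{partI}.

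I expect the main obstacle to be making the decomposition of $W$ into two one-sided pieces both \emph{invariant} and \emph{genuinely covered by Corollary \ref{cor:decomposition rank}}: passing from ``$x\notin\overline{D_n}$ for some $n$'' to an honest open invariant cover on which the relevant closed half-domain is empty requires care, because membership in $D_n$ versus $\overline{D_n}$ matters, and the sets $D_n\setminus\overline{D_{n+1}}$ need not be open. The cleanest route is probably to set $W_+ \coloneqq W\setminus\bigcap_{n\geq 0}\overline{D_{-n}}$ and $W_- \coloneqq W\setminus\bigcap_{n\geq 0}\overline{D_{n}}$, check these are open (complements of closed invariant sets) and cover $W$ (by definition of $\tilde X$), and then argue that on, say, $W_-$ one can further stratify by the finite ``forward reach'' of each orbit to reduce to the finitely-supported-domain case of Corollary \ref{cor:decomposition rank} — possibly via a further extension argument layered over the strata $\{x : x\notin\overline{D_n}\}$, each of which is open and invariant. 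Once the bound $\dimnuc(\CP(\E_{W_\pm}))\leq 2\dim(X)$ (or even $\leq\dim(X)$ if the stratification can be made to collapse) is in hand, assembling the final inequality is routine bookkeeping with the extension estimate.
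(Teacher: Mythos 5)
Your overall architecture differs from the paper's: you propose a single extension over $\tilde X$ followed by a two-element open invariant cover $W=W_+\cup W_-$ of the complement, whereas the paper performs two nested extensions $X\supseteq Y_-\supseteq\tilde X$ with $Y_-=\bigcap_{n\geq0}\overline{D_{-n}}$, each contributing $\dim(X)+1$ to the bound. Your route could in principle yield the same constant, since $W_+=X\setminus Y_-$ and $W_-=X\setminus Y_+$ are exactly the complements the paper's argument handles, and the sum-of-ideals estimate $\dimnuc(I+J)\leq\dimnuc(I)+\dimnuc(J)+1$ would give $2\dim(X)+1$ for $\CP(\E_W)$ if each piece were bounded by $\dim(X)$. (One small correction: the individual sets $\overline{D_n}$ are \emph{not} $\theta$-invariant --- $D_n$ is only forward-invariant --- so your parenthetical justification for the invariance of $\tilde X$ is wrong as stated; invariance holds for the two-sided intersection $\bigcap_{n\in\Z}\overline{D_n}$ and for the one-sided intersections $Y_\pm$, which is what is actually cited from Geffen.)

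The genuine gap is in the step where you bound $\dimnuc(\CP(\E_{W_\pm}))$ by $\dim(X)$. You propose to ``stratify by the finite forward reach of each orbit'' and handle the strata ``via a further extension argument.'' This cannot work: there are in general infinitely many strata, and every extension costs at least an additional $+1$ (and typically $+\dim(X)+1$) in the nuclear dimension estimate, so the bound would blow up. The tool that actually closes the argument --- and the one the paper uses, following the proof of \cite[Theorem 6.2]{Geffen2021} --- is an \emph{inductive limit}, not an extension: one exhausts the domain of $\theta|_{W_\pm}$ by an increasing sequence of open sets so that each restricted partial automorphism has finitely supported domains, identifies $\CP(\E_{W_\pm})$ with the inductive limit of the corresponding Cuntz--Pimsner algebras via \cite[Proposition 4.9]{partI}, applies Corollary \ref{cor:decomposition rank} to each stage, and then uses that $\dimnuc(\lim_\to A_n)\leq\liminf_n\dimnuc(A_n)$, which incurs no extra summands. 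Without this inductive-limit permanence step your estimate for the pieces $W_\pm$ does not close, and the theorem does not follow.
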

\begin{proof}

    The proof can be copied almost word by word from the proof of \cite[Theorem 6.2]{Geffen2021}. The set 
    \[Y_-\coloneqq\bigcap_{n\geq0}\overline{D_{-n}}\]
    is closed and $\theta$-invariant. By \cite[Proposition 4.11]{partI} we obtain the short exact sequence
    \[0\to \CP(\E_{X\backslash Y_-})\to \CP(\E)\to\CP(\E_{Y_-})\to 0.\]
As shown in \cite{Geffen2021}, the partial automorphism $\theta|_{X\backslash Y_-}$ can be obtained as an inductive limit of partial automorphisms $\theta^{(n)}$ with finitely supported domains. More precisely, there exists an increasing sequence $\{W_n\}_{n\geq 0}$ of open subsets of $X\backslash Y_-$ such that the union over all $W_n$ equals the domain of $\theta|_{X\backslash Y_-}$ , and such that the partial automorphism $\theta^{(n)}$ obtained by restricting the domain of $\theta|_{X\backslash Y_-}$ to $W_n$ has finitely supported domains, for all $n\in\N$. By \cite[Proposition 4.9]{partI} we have 
\[\CP(\E_{X\backslash Y_-})=\lim\limits_{\to}\CP(\Gamma_0(\V,\theta^{(n)})).\]
Due to the permanence properties of nuclear dimension together with Corollary \ref{cor:decomposition rank} we obtain 
\begin{align}\label{eq:estimate}\begin{split}\dim_{\mathrm{nuc}}(\CP(\E))&\leq\dim_{\mathrm{nuc}}(\CP(\E_{Y_-}))+ \lim\inf_n\dim_{\mathrm{nuc}}(\CP(\Gamma_0(\V,\theta^{(n)})))+1\\&\leq \dim_{\mathrm{nuc}}(\CP(\E_{Y_-}))+\dim(X)+1.\end{split}\end{align}

To bound the nuclear dimension of $\CP(\E_{Y_-})$ one can essentially run the same argument: The proof of \cite[Theorem 6.2]{Geffen2021} shows that
\[\Tilde{X}\coloneqq\bigcap_{n\in\Z}\overline{D_n}\]
is a closed $\theta$-invariant subset of $Y_-$, and that $\theta|_{Y_-\backslash \tilde{X}}$ is the inductive limit of partial automorphisms with finitely supported domains. Together with Equation (\ref{eq:estimate}), this establishes the estimate in the statement of the theorem. 
\end{proof}
\begin{corollary}\label{cor:nuclear dimension}
    If $\bigcap_{n\in\Z}\overline{D_n}=\emptyset$ then
    \begin{align*}
        \dimnuc(\CP(\E))\leq2\dim(X)+2.
    \end{align*}
\end{corollary}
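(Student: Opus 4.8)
The plan is to read off Corollary \ref{cor:nuclear dimension} directly from Theorem \ref{thm:bound on nuclear dimension} by specializing to the hypothesis $\Tilde{X}\coloneqq\bigcap_{n\in\Z}\overline{D_n}=\emptyset$. Theorem \ref{thm:bound on nuclear dimension} already provides, with no extra assumptions on $\theta$, the estimate
\[\dimnuc(\CP(\E))\leq\dimnuc(\CP(\E_{\Tilde{X}}))+2\dim(X)+2,\]
so the entire task reduces to evaluating the first summand on the right when $\Tilde{X}$ is empty.

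First I would observe that, by Definition \ref{def:restriction}, the empty set is (vacuously) a locally closed $\theta$-invariant subset of $X$, so that $\E_{\Tilde{X}}=\E_\emptyset=\Gamma_0(\V|_\emptyset,\theta_\emptyset)$ is the zero correspondence over $C_0(\emptyset)=\{0\}$, whence $\CP(\E_{\Tilde{X}})=\{0\}$. The zero $C^*$-algebra has nuclear dimension $0$: in the definition recalled in Section \ref{sect:classification program} one takes the constant net $(F_\lambda,\psi_\lambda,\varphi_\lambda)=(\{0\},0,0)$, which satisfies conditions (1)--(3) with $n=0$. Hence $\dimnuc(\CP(\E_{\Tilde{X}}))=0$, and substituting into the displayed inequality gives $\dimnuc(\CP(\E))\leq 2\dim(X)+2$.

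There is essentially no obstacle here, since all the work is carried out in Theorem \ref{thm:bound on nuclear dimension}; the corollary is a one-line specialization. The only point worth a moment's attention is the convention that the zero $C^*$-algebra has nuclear dimension $0$ rather than being undefined, which is immediate from the definition used in the paper.
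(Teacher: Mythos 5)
Your proposal is correct and is exactly the intended argument: the paper gives no separate proof of Corollary \ref{cor:nuclear dimension} precisely because it is the one-line specialization of Theorem \ref{thm:bound on nuclear dimension} to $\Tilde{X}=\emptyset$, where $\CP(\E_{\Tilde{X}})=\{0\}$ contributes nothing to the bound. Your remark on the convention for the nuclear dimension of the zero algebra is a reasonable point of care, and in any case the inequality holds whether one assigns it $0$ or $-1$.
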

\begin{theorem}\label{thm:finite nuclear dimension}
    If in addition to the assumptions of Theorem \ref{thm:bound on nuclear dimension} we require $\theta$ to be minimal and $U\subsetneq X$, then the nuclear dimension of $\CP(\E)$ is finite. 
\end{theorem}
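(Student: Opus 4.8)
The strategy is to show that the ``core'' piece $\CP(\E_{\tilde X})$ appearing in Theorem \ref{thm:bound on nuclear dimension} has finite nuclear dimension, so that the estimate $\dimnuc(\CP(\E))\leq\dimnuc(\CP(\E_{\tilde X}))+2\dim(X)+2$ gives a finite bound. Here $\tilde X=\bigcap_{n\in\Z}\overline{D_n}$ is closed and $\theta$-invariant, and the restricted partial automorphism $\theta|_{\tilde X}$ has the property that all its domains are dense in $\tilde X$; in particular it is ``essentially global'' in the sense that $\CP(\E_{\tilde X})$ should behave like the Cuntz--Pimsner algebra of an honest homeomorphism twisted by a vector bundle. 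If $\tilde X=\emptyset$ we are done immediately by Corollary \ref{cor:nuclear dimension}, so assume $\tilde X\neq\emptyset$.

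\textbf{Key steps.} First I would record that, since $\theta$ is minimal and $U\subsetneq X$, the set $D_{\gl}=\bigcap_{n\in\Z}D_n$ is a proper subset of $X$; combined with Theorem \ref{thm:minimality partial case} (the orbit of every point of $D_{\gl}$ is dense) one sees that $\theta|_{\tilde X}$ is itself minimal. Next, the crucial structural input is that on $\tilde X$ the partial automorphism extends to, or is conjugate to the restriction of, a genuine minimal homeomorphism on a compact metrizable space of covering dimension at most $\dim(X)$ --- this is exactly the situation analyzed in \cite{adamo2023}, whose Cuntz--Pimsner algebras associated to minimal homeomorphisms twisted by vector bundles over finite-dimensional compact spaces are shown to have finite nuclear dimension (indeed finite decomposition rank in the stably finite case, and they are classifiable in general). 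I would therefore invoke the relevant finiteness result from \cite{adamo2023} to conclude $\dimnuc(\CP(\E_{\tilde X}))<\infty$. Feeding this into Theorem \ref{thm:bound on nuclear dimension} yields $\dimnuc(\CP(\E))<\infty$, which is the assertion.

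\textbf{Main obstacle.} The delicate point is the reduction of $\CP(\E_{\tilde X})$ to the ``global'' framework of \cite{adamo2023}. Although every domain $D_n$ is dense in $\tilde X$, the maps $\theta^n$ need not be everywhere defined on $\tilde X$, so $\theta|_{\tilde X}$ is a priori still only a partial automorphism; one must argue either that minimality forces the domains to be all of $\tilde X$ (so $\theta|_{\tilde X}$ is a homeomorphism), or else pass to a suitable compactification/extension on which the dynamics become global without changing the Cuntz--Pimsner algebra up to the relevant permanence properties and without increasing the covering dimension. This is where the hypotheses $\theta$ minimal and $U\subsetneq X$ do the real work --- minimality to propagate density of the domains across all of $\tilde X$, and $U\subsetneq X$ to guarantee we are genuinely in the finite-covering-dimension, stably-finite-compatible regime rather than a degenerate case. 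Once this identification is in place, the rest is bookkeeping: apply Theorem \ref{thm:bound on nuclear dimension}, the cited finiteness theorem from \cite{adamo2023}, and monotonicity of covering dimension under passing to closed subsets (Proposition 6.4, Chapter 3 of \cite{Pears1975}) to keep the dimension bound under control.
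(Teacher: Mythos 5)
There is a genuine gap, and it is fatal to the proposed strategy. The set $\tilde X=\bigcap_{n\in\Z}\overline{D_n}$ is closed and $\theta$-invariant (this is recorded in the proof of Theorem \ref{thm:bound on nuclear dimension}), so under the very hypothesis of minimality that you are adding, $\tilde X$ is either $\emptyset$ or all of $X$. In the first case Corollary \ref{cor:nuclear dimension} finishes the proof, as you note; but in the second case --- which is the generic one, e.g.\ an irrational rotation restricted to the complement of a single point, where every $D_n$ is the complement of a finite set and hence dense --- the estimate of Theorem \ref{thm:bound on nuclear dimension} reads $\dimnuc(\CP(\E))\leq\dimnuc(\CP(\E))+2\dim(X)+2$ and carries no information. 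Your plan of ``bounding the core piece $\CP(\E_{\tilde X})$ separately'' therefore never gets off the ground: there is no proper nonempty core to analyze. Relatedly, the proposed identification of $\theta|_{\tilde X}$ with (the restriction of) a global homeomorphism so as to invoke \cite{adamo2023} is not justified and is false in the rotation example just mentioned: there $\tilde X=X$ but $\theta$ is undefined at one point, and the resulting Cuntz--Pimsner algebra (an orbit-breaking subalgebra, which can be AF) is structurally very different from the crossed product of the rotation, so no such conjugacy or extension argument can be made to work.

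The missing idea is to go in the opposite direction: rather than analyzing $\tilde X$, one shrinks the domain. Since $U\subsetneq X$, one can choose an exhaustion $\{U_k\}_{k\geq 0}$ of $U$ by open sets such that the restricted partial automorphisms $\beta^{(k)}\coloneqq\theta^{(U_k)}$ satisfy $\bigcap_{n\geq 0}\overline{K_n^{(k)}}=\emptyset$ for their iterated domains $K_n^{(k)}$ (this is the content of the proof of \cite[Theorem 7.5]{Geffen2021}, and it is precisely where $U\subsetneq X$ is used). Each $\CP(\Gamma_0(\V,\beta^{(k)}))$ then falls under Corollary \ref{cor:nuclear dimension} and has nuclear dimension at most $2\dim(X)+2$, and since $\CP(\E)=\lim_{\to}\CP(\Gamma_0(\V,\beta^{(k)}))$ by \cite[Proposition 4.9]{partI}, the same bound passes to $\CP(\E)$ by the permanence of nuclear dimension under inductive limits. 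Note that minimality enters only through the construction of the exhaustion, not through any analysis of a minimal subsystem.
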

\begin{proof}

  Let $\{U_k\}_{k\geq 0}$ be a sequence of open sets exhausting $U$. Define $\beta^{(k)}$ to be the partial automorphism on $X$ whose domain $D_{-1}^{\beta^{(k)}}$ is $U_k$, and $\beta^{(k)}(x)=\theta(x)$ for all $x\in U_k$. Write $K_n^{(k)}$ for the $n$-th domain of $\beta^{(k)}$. Note that $K_{-1}^{(k)}=U_k$. We have 
  \[\CP(\E)=\lim\limits_{\to}\CP(\Gamma_0(\V,\beta^{(k)}))\]
  by \cite[Proposition 4.9]{partI}. 
  
   It is shown in the proof of \cite[Theorem 7.5]{Geffen2021} that we can choose the sequence $\{U_k\}_{k\geq 0}$ such that 
  \[ \bigcap_{n\geq0}\overline{K_n^{(k)}}=\emptyset \]
  for all $k\geq 0$. We obtain 
  \[\dim_{\mathrm{nuc}}(\CP(\E))\leq\lim\inf_k\dim_{\mathrm{nuc}}(\CP(\Gamma_0(\V,\beta^{(k)})))\leq 2\dim(X)+2\]
  from Corollary \ref{cor:nuclear dimension}.
\end{proof}

Recall from Section \ref{sect:orbit types} that $D_-$ denotes the set
\[\left(\bigcap_{n>0}D_n\right)\backslash \left(\bigcap_{n\in\Z}D_n\right).\]
\begin{theorem}\label{thm:classifiable}
    If $\theta:U\to V$ is minimal with $U\subsetneq X$ and $X$ is an infinite compact second countable Hausdorff space with finite covering dimension, then the nuclear dimension of $\CP(\E)$ is at most one. This implies that $\CP(\E)$ is classifiable. It is stably finite if the vector bundle is a line bundle or if $D_-$ is nonempty. Otherwise $\CP(\E)$ is purely infinite. 
\end{theorem}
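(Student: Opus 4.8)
The plan is to establish the statement in three stages: finiteness (and then the exact bound) of the nuclear dimension together with classifiability, the reduction from minimality to freeness of $\theta$, and finally the stably-finite/purely-infinite dichotomy via conformal measures.

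First, all hypotheses of Theorem \ref{thm:finite nuclear dimension} are met, so $\dimnuc(\CP(\E))<\infty$. I would then verify the remaining conditions in the definition of a classifiable $C^*$-algebra: $\CP(\E)$ is unital (because $C(X)$ is unital and $\pi_u(1_{C(X)})$ is a unit for $\CP(\E)$), separable (because $C(X)$ is), simple (by \cite[Corollary 4.17]{partI}, using minimality), infinite-dimensional (because $C(X)\subseteq\CP(\E)$ already is, as $X$ is infinite), and nuclear and UCT (inherited from the commutative algebra $C(X)$ by the general theory of Cuntz--Pimsner algebras). Hence $\CP(\E)$ is classifiable. To improve ``finite'' to ``at most one'', I would note that a separable, simple, unital, infinite-dimensional, nuclear $C^*$-algebra with finite nuclear dimension is $\mathcal{Z}$-stable, and that any such algebra has nuclear dimension at most one by \cite{CETWW}.

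Next I would use minimality to obtain freeness. Since $\theta$ is minimal, every orbit is dense in $X$ by Theorem \ref{thm:minimality partial case}; as $X$ is infinite and Hausdorff, no finite subset is dense, so every orbit is infinite. In particular $\theta$ has no periodic points and is therefore free, and moreover $D_{\fin}=\emptyset$ since any point of $D_{\fin}$ lies in a finite orbit. Freeness allows me to apply Proposition \ref{prop:traces cp algebra}, which identifies $T(\CP(\E))$ affinely with $M_\theta^1(X,d)$. By the dichotomy recalled in Section \ref{sect:classification program}, the classifiable algebra $\CP(\E)$ is stably finite precisely when $M_\theta^1(X,d)\neq\emptyset$, and purely infinite otherwise, so it remains to decide when $d$-conformal probability measures exist.

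If $\V$ is a line bundle then $d=1$ and $M_\theta^1(X,1)=M_\theta^1(X)$ is nonempty: $X$ itself is a nonempty compact $\theta$-invariant set, hence supports a $\theta$-invariant probability measure by \cite[Theorem 4.21]{partI}. If instead $d>1$ but $D_-\neq\emptyset$, then $M_\theta^1(X,d)\neq\emptyset$ by Proposition \ref{prop:conformal measures}. In either case $\CP(\E)$ is stably finite. In the only remaining case $d>1$ and $D_-=\emptyset$; combined with $D_{\fin}=\emptyset$ this makes the partition of Section \ref{sect:orbit types} collapse to $X=D_+\sqcup D_{\gl}$, while every $\mu\in M_\theta^1(X,d)$ with $d>1$ satisfies $\mu(D_+)=\mu(D_{\gl})=0$ by Proposition \ref{prop:conformal measures}, which would force $\mu(X)=0$. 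Hence $M_\theta^1(X,d)=\emptyset$ and $\CP(\E)$ is purely infinite. A bundle of non-constant rank is treated in the same way using the $D$-conformal measures of Remark \ref{rem:nonconstant rank}. The main obstacle is really this last case: one must know that minimality leaves a $d$-conformal measure no room to live once $D_-$ is empty, which is exactly what the vanishing assertions of Proposition \ref{prop:conformal measures} give after observing that $D_{\fin}$ must vanish.
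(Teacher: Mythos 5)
Your proposal is correct and follows essentially the same route as the paper: Theorem \ref{thm:finite nuclear dimension} for finite nuclear dimension, the standard permanence properties plus the UCT for Cuntz--Pimsner algebras for classifiability, the Castillejos--Evington--Tikuisis--White--Winter results to improve the bound to one, and the combination of Propositions \ref{prop:conformal measures} and \ref{prop:traces cp algebra} (with the observation that minimality forces $D_{\fin}=\emptyset$, so $X=D_+\sqcup D_{\gl}$ has conformal measure zero when $D_-=\emptyset$ and $d>1$) for the stably finite/purely infinite dichotomy. The only cosmetic difference is that the paper cites \cite[Proposition 4.22]{partI} directly for the line bundle case where you re-derive the existence of an invariant measure from compactness.
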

\begin{proof}
   If the space is infinite then any minimal action is automatically free. Hence $\CP(\E)$ is simple by \cite[Corollary 4.17]{partI}. Compactness of $X$ implies that $\CP(\E)$ is unital. From Theorem \ref{thm:finite nuclear dimension} we obtain that $\CP(\E)$ has finite nuclear dimension. Since $X$ is metrizable, $C(X)$ is separable. Together with the fact that $\E$ is countably generated, this implies that $\CP(\E)$ is separable. Proposition 8.8 from \cite{Katsura:2004} shows that $\CP(\E)$ satisfies the UCT. 

   Theorems A and B from \cite{Castillejos2021} yield that the Cuntz--Pimsner algebra is classifiable and has nuclear dimension at most one. Since it is simple, tracial states are automatically faithful. It then follows from  \cite[Proposition 4.22]{partI} that $\CP(\E)$ has a faithful tracial state if $\V$ is a line bundle. Combining Propositions \ref{prop:conformal measures} and \ref{prop:traces cp algebra} (or Remark \ref{rem:nonconstant rank} if the vector bundle does not have constant rank) yields that if $\V$ is not a line bundle, then it is stably finite if and only if $D_-$ is nonempty. This concludes the proof. 
\end{proof}

\begin{remark}
Note that Theorem \ref{thm:classifiable} assumes $U\subsetneq X$. The case that $U$ equals $X$, that is, the case of a global action, was handled in \cite{adamo2023}. 
\end{remark}

\section{Orbit-breaking subalgebras}\label{sect:Orbit breaking subalgebras}

In this section, we apply the main results from the previous sections to orbit-breaking subalgebras. For an overview of the history of orbit-breaking subalgebras, see \cite{adamo2023}. We are mainly interested in generalizing results from \cite{phillips:2007} and \cite{adamo2023}. First, we need prove some results about restricting partial automorphisms.

\subsection{Restricting partial automorphisms}

Let $W$ be an open subset of $X$. We recall the definition of the restricted partial automorphism $\theta^{(W)}$ on $X$ from \cite{partI}. 
\begin{definition}[{\cite[Definition 4.7]{partI}}]
    For a partial automorphism $\theta:U\to V$ and an open subset $W\subset U$ we define a partial automorphism $\theta^{(W)}:W\to\theta(W)$ on $X$ by $\theta^{(W)}(x)=x$ for all $x\in W$. The $n$-th domain of $\theta^{(W)}$ is denoted by $D_n^{(W)}$. 
\end{definition}

\begin{remark}
    In the following, we will write $\orb(x)$ for the $\theta$-orbit of a point $x$, and $\orb^{W}(x)$ for the $\theta^{(W)}$-orbit. 
\end{remark}
\begin{definition}\label{def:restricted partial auto}
    Let $Y$ be a closed subset of $U$. 
    \begin{enumerate}
        \item We say that \emph{taking out $Y$ preserves orbit size} if $|\orb^{U\backslash Y}(x)|=|\orb(x)|$ for all $x\in X$.
        \item We say that \emph{taking out $Y$ preserves dense orbits} if whenever the $\theta$-orbit of a point $x\in X$ is dense in $X$, the $\theta^{(U\backslash Y)}$-orbit of $x$ is dense as well.
        \item We say that $Y$ \emph{meets every $\theta$-orbit at most once} if $\theta^n(y)\cap Y\subset\{y\}$ for all $n\in\Z\backslash\{0\}$ and $y\in Y\cap D_{-n}$. 
    \end{enumerate}
\end{definition}

Given a partial automorphism $\theta:U\to V$ and a closed subset $Y$ of $U$, we want to investigate under which conditions the restricted partial automorphism $\theta^{(U\backslash Y)}$ has the same set of invariant measures as $\theta$. It turns out that periodic orbits of $\theta$ need to be treated differently from the rest of the space. Let therefore $X^{(l)}$ be the set of all points of $X$ which have a periodic orbit of size $l$, namely $X^{(l)}=\{x\in X:|\orb(x)|=l\}\cap D_{\gl}$. Let $X_k$ be the set of all points of $X$ with a periodic orbit of size at most $k$, namely $X_k\coloneqq \bigcup_{l=1}^kX^{(l)}$. We allow $k=\infty$ to denote the set of all points with a periodic orbit of any size. Note that the sets $X^{(k)}$ and $X_k$ are very similar, but not identical, to the eponymous sets in the proof of Theorem \ref{thm:finitely supported domains rsh}
\begin{lemma}\label{lem: points with bounded orbit closed}
    For every $k\in\N$ the set $X_k$ is closed in $X$.
\end{lemma}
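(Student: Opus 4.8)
For every $k \in \N$ the set $X_k$ (the points lying in a periodic $\theta$-orbit of length at most $k$) is closed in $X$.

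The plan is to rewrite $X_k$ as a finite union of fixed-point sets of powers of $\theta$, and then to show that each of these is closed in $X$.

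First I would introduce, for each $l\in\{1,\dots,k\}$, the set $F_l\coloneqq\{x\in D_{-l}:\theta^l(x)=x\}$, and check that $X_k=\bigcup_{l=1}^k F_l$. The inclusion $X^{(l)}\subseteq F_l$ is immediate, since a point with a periodic orbit of size $l$ lies in $D_{\gl}\subseteq D_{-l}$ and is fixed by $\theta^l$. For the reverse inclusion, if $x\in F_l$ with $l\le k$, then $x\in D_{-l}$ lets us apply $\theta,\dots,\theta^l$ to $x$, and $\theta^l(x)=x$ additionally gives $x\in D_l$, so we may apply $\theta^{-1},\dots,\theta^{-l}$ as well; iterating periodically (using $\theta^{ml}(x)=x$ for all $m\in\Z$) shows $\theta^n$ is defined at $x$ for every $n$, so $x\in D_{\gl}$, while $\orb(x)=\{x,\theta(x),\dots,\theta^{l-1}(x)\}$ has at most $l\le k$ elements; hence $x\in X_k$. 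After this reduction it suffices to prove that each $F_l$ is closed.

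To prove $F_l$ is closed, I would take a net $(x_i)$ in $F_l$ converging to some $x\in X$ and show $x\in F_l$. Each $x_i$ lies in $D_{\gl}$, so its full orbit is defined and $\theta^l(x_i)=x_i$. Then I would split according to the orbit type of the limit $x$, using the partition $X=D_{\fin}\sqcup D_+\sqcup D_-\sqcup D_{\gl}$ from Section~\ref{sect:orbit types}. If $x\in D_{\gl}$, then $\theta^l$ is defined and continuous on the open neighbourhood $D_{-l}$ of $x$, so $\theta^l(x)=\lim_i\theta^l(x_i)=\lim_i x_i=x$ and $x\in F_l$. If $x\in D_+$, then still $x\in\bigcap_{n>0}D_{-n}\subseteq D_{-l}$, so the same limit computation forces $\theta^l(x)=x$; but then the argument of the previous paragraph puts $x$ in $D_{\gl}$, contradicting $x\in D_+$. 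The case $x\in D_-$ is symmetric, carried out with $\theta^{-l}$ (which is defined and continuous near $x$ since $x\in\bigcap_{n>0}D_n\subseteq D_l$) in place of $\theta^l$. Thus $x\in D_{\gl}$, and therefore $x\in F_l$.

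The step I expect to be the main obstacle is the remaining possibility that the limit $x$ lies in $D_{\fin}$: because the domain $D_{-l}$ is open but need not be closed, a net of $\theta^l$-periodic points could a priori converge to a point at which $\theta^l$ is no longer defined, and this must be excluded before the four-case analysis above becomes exhaustive — so the real content of the lemma is precisely this boundary control, which one derives from the structure of the setting (typically, the emptiness of $D_{\fin}$ in the situations to which the lemma is applied).
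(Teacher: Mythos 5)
Your reduction of $X_k$ to the finite union $\bigcup_{l=1}^k F_l$ with $F_l=\{x\in D_{-l}:\theta^l(x)=x\}$ is correct, and the three cases $x\in D_{\gl}$, $x\in D_+$, $x\in D_-$ of your net argument go through as you describe. This is a genuinely different route from the paper, which argues on the complement: for $x\notin X_k$ it produces an open neighbourhood $W$ such that the sets $\theta^0(W),\dots,\theta^k(W)$ are pairwise disjoint, so that no point of $W$ can be periodic with period at most $k$. However, your proof is not complete. The case $x\in D_{\fin}$, which you explicitly leave open, is a genuine gap, and the way out you propose --- that $D_{\fin}$ is empty in the situations where the lemma is applied --- is not available: the lemma is stated for an arbitrary partial automorphism and is invoked (via measurability of the sets $X^{(l)}$) in the proof of Proposition \ref{prop:blurbs}, where nothing forces $D_{\fin}$ to be empty; in the orbit-breaking setting $D_{\fin}$ is nonempty as soon as some $h$-orbit meets $Y$ more than once.

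You should also be aware that the missing case cannot be repaired by a cleverer argument, because it is exactly where the statement can fail. Take $X=\{0\}\cup\{\pm 1/n:n\in\N\}\subset\R$ with $U=V=X\backslash\{0\}$ and $\theta(x)=-x$. Every point of $U$ is periodic of period $2$, so $X_2=U$, and $0\in D_{\fin}$ lies in the closure of $X_2$ but not in $X_2$; hence $X_2$ is not closed. (The paper's own one-line proof passes over the same point: the separating neighbourhood $W$ need not exist when some iterate $\theta^j(x)$, $1\leq j\leq k$, is undefined at $x$.) What does survive in general, and what is all that is used downstream, is that each $F_l$ is the intersection of the open set $D_{-l}$ with a set closed in $D_{-l}$, hence locally closed and in particular Borel, so that each $X^{(l)}$ is measurable. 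To obtain the literal statement you would need an additional hypothesis preventing periodic points from accumulating at points of $D_{\fin}$ (for instance freeness of $\theta$, under which $X_k=\emptyset$).
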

\begin{proof}
    Let $x\in X\backslash X_k$. Then there exists an open neighborhood $W$ of $x$ such that $\theta^l(W)$ is disjoint from $\theta^j(W)$ for all $0\leq l<j\leq k$. This implies that $W$ is contained in $X\backslash X_k$, and hence $X_k$ is closed. 
\end{proof}
Lemma \ref{lem: points with bounded orbit closed} implies that $X^{(l)}=X_l\backslash X_{l-1}$ is measurable.
\begin{lemma}\label{lem:invariant measures periodic case}
    Let $L$ be a measurable subset of $X^{(l)}$ for some $l\in\N$. Let $Y\subset X$ be such that $Y$ meets every $\theta$-orbit at most once. Then $\mu(\theta(L))=\mu(L)$ for all $\mu\in M_{\theta^{(U\backslash Y)}}^1(X)$. 
\end{lemma}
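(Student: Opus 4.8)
The plan is to first rewrite the invariance hypothesis in a usable form, then split $L$ according to whether its points lie in $Y$, and treat the two resulting pieces separately. Recall that (as in Definition \ref{def:restricted partial auto}) $Y$ is closed in $U$, so $U\setminus Y$ is open and $Y$ is Borel, and that $\theta^{(U\backslash Y)}$ is simply $\theta$ restricted to $U\setminus Y$, a homeomorphism onto $\theta(U\setminus Y)$. Unwinding the definition of $\theta$-invariance therefore shows that $\mu\in M^1_{\theta^{(U\backslash Y)}}(X)$ is equivalent to
\[\mu(\theta(W))=\mu(W)\qquad\text{for every measurable }W\subseteq U\setminus Y.\]
I would also record beforehand that $X^{(l)}$ is measurable (Lemma \ref{lem: points with bounded orbit closed}), is contained in $D_{\gl}\subseteq U$, is $\theta$-invariant in both directions, and---since every point of $X^{(l)}$ has minimal period exactly $l$---satisfies $\theta^l=\id$ on $X^{(l)}$; in particular $\theta^{-(l-1)}$ agrees with $\theta$ there.

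Write $L=L'\sqcup L''$ with $L'\coloneqq L\cap Y$ and $L''\coloneqq L\setminus Y$, both measurable and contained in $U$. The piece $L''$ is immediate: $L''\subseteq U\setminus Y$, so the displayed identity gives $\mu(\theta(L''))=\mu(L'')$. For $L'$, if $l=1$ there is nothing to prove as $\theta=\id$ on $X^{(1)}$, so assume $l\geq 2$. Since $L'\subseteq Y$ and $Y$ meets every $\theta$-orbit at most once, each $x\in L'$ is the unique point of $\orb(x)$ lying in $Y$; hence, for $j=1,\dots,l-1$, the set $W_j\coloneqq\theta^{-j}(L')$ is a measurable subset of $X^{(l)}$ disjoint from $Y$, because its points lie on the same (length $l\geq 2$) orbits as those of $L'$ but are distinct from them. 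Thus $W_j\subseteq U\setminus Y$, the displayed identity applies, and $\mu(\theta(W_j))=\mu(W_j)$, i.e.\ $\mu(W_{j-1})=\mu(W_j)$ with $W_0\coloneqq L'$. Telescoping over $j=1,\dots,l-1$ gives $\mu(L')=\mu(W_{l-1})=\mu(\theta^{-(l-1)}(L'))=\mu(\theta(L'))$. Finally $\theta$ is injective, so $\theta(L')\sqcup\theta(L'')=\theta(L)$ and $\mu(\theta(L))=\mu(\theta(L'))+\mu(\theta(L''))=\mu(L')+\mu(L'')=\mu(L)$.

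The main obstacle is the bookkeeping surrounding the orbit-breaking hypothesis: one must verify carefully that every backward iterate $\theta^{-j}(L')$ with $1\leq j\leq l-1$ avoids $Y$---this is precisely where ``$Y$ meets every orbit at most once'' is used, in conjunction with $l\geq 2$---and that all sets arising are measurable and contained in $U$, so that the invariance identity is legitimately applicable to them. The other point one must not overlook is that $\theta^l$ is the identity on $X^{(l)}$, which is what lets one identify $\theta^{-(l-1)}(L')$ with $\theta(L')$ in the last step.
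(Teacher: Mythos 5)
Your proof is correct and follows essentially the same route as the paper: both split $L$ into $L\cap Y$ and $L\setminus Y$, use that $\theta=\theta^{1-l}$ on $X^{(l)}$, and exploit the orbit-breaking hypothesis to see that the backward iterates of $L\cap Y$ avoid $Y$, so that invariance under $\theta^{(U\backslash Y)}$ can be applied $l-1$ times. The paper phrases this by noting $L\cap Y\subseteq D_{l-1}^{(U\backslash Y)}$ and applying $(\theta^{(U\backslash Y)})^{1-l}$ directly, whereas you telescope one step at a time; the two are the same argument.
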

\begin{proof}
    If $L$ is contained in $X^{(l)}$ then we have $\theta(x)=\theta^{(1-l)}(x)$ for every $x\in L$. Now take $x$ in $L\cap Y$. Since $Y$ meets every $\theta$-orbit at most once $x$ lies in $D_{l-1}^{(U\backslash Y)}$ and we have $\theta^{(1-l)}(x)=(\theta^{(U\backslash Y)})^{(1-l)}(x)$. This implies \[\theta(L\cap Y)=\theta^{(1-l)}(L\cap Y)=(\theta^{(U\backslash Y)})^{(1-l)}(L\cap Y),\]and so for every $\mu\in M_{\theta^{(U\backslash Y)}}^1(X)$ we obtain 
    \begin{align*}
        \mu(\theta(L))&=\mu(\theta(L\cap Y))+\mu(\theta(L\backslash Y))=\mu((\theta^{(U\backslash Y)})^{(1-l)}(L\cap Y))+\mu(\theta^{(U\backslash Y)}(L\backslash Y))\\
        &=\mu(L\cap Y)+\mu(L\backslash Y)=\mu(L).
    \end{align*}
    This concludes the proof.
\end{proof}
\begin{lemma}\label{lem: blurbsblurbs implies measure zero}
    Let $Y\subset D_{\gl}$ be such that $Y$ meets every $\theta$-orbit at most once. Let $L$ be a measurable subset of $Y$ such that no point in $L$ has a periodic orbit.  then $\mu(L)=0$ and $\mu(\theta(L))=0$ for every $\mu\in M_{\theta^{(U\backslash Y)}}^1(X)$. 
\end{lemma}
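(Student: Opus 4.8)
The plan is to run the same orbit-partition argument as in the proof of Proposition~\ref{prop: no measures}, but now for the broken partial automorphism $\theta^{(U\backslash Y)}$, keeping careful track of which $\theta$-translates of a point of $L$ survive in the domain $U\backslash Y$. The guiding observation is that, since $L\subseteq Y$, no point of $L$ lies in the domain $U\backslash Y$ of $\theta^{(U\backslash Y)}$, so the orbit of $x\in L$ gets ``cut'' precisely between $x$ and $\theta(x)$; every other translate $\theta^n(x)$ with $n\neq 0$ will turn out to lie in $U\backslash Y$, so there $\theta^{(U\backslash Y)}$ still behaves like $\theta$.

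First I would establish two elementary facts, each reducing to the hypothesis that $Y$ meets every $\theta$-orbit at most once together with the fact that no point of $L$ is periodic. \emph{(i) Disjointness:} for $n\neq m$ the sets $\theta^n(L)$ and $\theta^m(L)$ are disjoint, since $\theta^n(x)=\theta^m(x')$ with $x,x'\in L\subseteq D_{\gl}$ forces $x'=\theta^{n-m}(x)\in\orb(x)$, and as $x,x'\in Y$ this gives $x'=x$, hence $\theta^{n-m}(x)=x$, contradicting non-periodicity. \emph{(ii) Survival:} for every $x\in L$ and $n\neq 0$ we have $\theta^n(x)\notin Y$, otherwise $x\in Y\cap D_{-n}$ and $\theta^n(x)\in Y$ would force $\theta^n(x)=x$; combining this with $\theta^n(L)\subseteq\theta^n(D_{\gl})=D_{\gl}\subseteq D_{-1}=U$ (using $D_{\gl}=\theta(D_{\gl})$ as in the proof of Proposition~\ref{prop:conformal measures}) yields $\theta^n(L)\subseteq U\backslash Y$ for all $n\neq 0$. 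I would also note here that each $\theta^n(L)$ is measurable, since $\theta$ is a homeomorphism of $U$ onto $V$ and $L$ is measurable.

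Next I would convert the $\theta^{(U\backslash Y)}$-invariance of $\mu$ into a tower relation. Since $\theta^{(U\backslash Y)}$ is just $\theta$ restricted to $U\backslash Y$ and $\theta$ is injective, whenever $\theta^n(L)\subseteq U\backslash Y$ (that is, for every $n\neq 0$) one gets $(\theta^{(U\backslash Y)})^{-1}(\theta^{n+1}(L))=\theta^n(L)$, hence $\mu(\theta^n(L))=\mu(\theta^{n+1}(L))$ for all $n\neq 0$. The chain is broken at $n=0$, because $L=\theta^0(L)$ does not meet $U\backslash Y$, so this forces only the two constant runs $\mu(\theta^n(L))=\mu(L)$ for all $n\leq 0$ and $\mu(\theta^n(L))=\mu(\theta(L))$ for all $n\geq 1$. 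Summing over $n\in\Z$ and using disjointness together with $\mu(X)=1<\infty$, the series $\sum_{n\leq 0}\mu(L)+\sum_{n\geq 1}\mu(\theta(L))$ is finite, which forces $\mu(L)=\mu(\theta(L))=0$, as claimed.

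I do not anticipate a genuine obstacle: the scheme is exactly that of Proposition~\ref{prop: no measures} and Lemma~\ref{lem:invariant measures periodic case}, with bookkeeping about which translates lie in the broken domain. The only point requiring care is applying the hypothesis ``$Y$ meets every $\theta$-orbit at most once'' correctly — it is literally a statement about two points of $Y$ lying in one common $\theta$-orbit, and both of the elementary facts above are obtained by reducing to that situation and then invoking that $L$ contains no periodic points.
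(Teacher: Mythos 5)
Your proof is correct. It is, in substance, the same mechanism as the paper's, but packaged differently: the paper's proof consists of the single observation that your "cut/survival" analysis says precisely that every $x\in L$ lies in $D_-^{(U\backslash Y)}$ (it is not in the domain $U\backslash Y$ at all, while its backward $\theta^{(U\backslash Y)}$-orbit is infinite because $\theta^{-n}(x)\in D_{\gl}\backslash Y$ for all $n\geq 1$) and every $\theta(x)$ lies in $D_+^{(U\backslash Y)}$, after which it simply invokes Proposition \ref{prop: no measures} for the restricted partial automorphism $\theta^{(U\backslash Y)}$. Your facts (i) and (ii) are exactly the ingredients needed for those two containments; instead of quoting the proposition you re-prove its relevant instance inline, via the pairwise disjoint tower $\{\theta^n(L)\}_{n\in\Z}$, the two constant runs $\mu(\theta^n(L))=\mu(L)$ for $n\leq 0$ and $\mu(\theta^n(L))=\mu(\theta(L))$ for $n\geq 1$, and finiteness of $\mu$. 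Both arguments are valid; the paper's reduction is shorter and reuses an established result, while yours is self-contained and makes the bookkeeping (measurability of the translates, which sets lie in the domain and codomain of $\theta^{(U\backslash Y)}$ so that invariance applies) fully explicit. If you wanted to compress your write-up, you could stop after your two elementary facts, note the resulting inclusions $L\subset D_-^{(U\backslash Y)}$ and $\theta(L)\subset D_+^{(U\backslash Y)}$, and cite Proposition \ref{prop: no measures}.
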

\begin{proof}
    It is easy to see that, under the above assumptions, $L$ is a subset of $D_-^{(U\backslash Y)}$, and $\theta(L)$ is a subset of $D_+^{(U\backslash Y)}$. Then use Proposition \ref{prop: no measures}.
\end{proof}
\begin{lemma}\label{lem:invariant measures nonperiodic case}
    Take $Y\subset D_{\gl}$ such that $Y$ meets every $\theta$-orbit at most once. Let $L$ be a measurable subset of $X$ such that no point in $L$ has a periodic orbit. Then $\mu(\theta(L))=\mu(L)$ for all $\mu\in M_{\theta^{(U\backslash Y)}}^1(X)$. 
\end{lemma}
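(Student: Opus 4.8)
The plan is to reduce the statement for a general non-periodic measurable set $L$ to the two cases already treated: points lying on $Y$ and points lying off $Y$. Concretely, write $L = (L\cap Y) \sqcup (L\setminus Y)$, so that $\mu(\theta(L)) = \mu(\theta(L\cap Y)) + \mu(\theta(L\setminus Y))$, using that $\theta$ is injective so the images remain disjoint. For the second summand, note that $L\setminus Y$ is disjoint from $Y$ and hence $\theta$ and $\theta^{(U\setminus Y)}$ agree on it as well as on a full forward/backward neighbourhood; more precisely every point of $L\setminus Y$ lies in the domain $D_{-1}^{(U\setminus Y)}$ and $\theta^{(U\setminus Y)}$ restricted to $L\setminus Y$ coincides with $\theta$, so $\theta(L\setminus Y) = \theta^{(U\setminus Y)}(L\setminus Y)$ and invariance of $\mu$ under $\theta^{(U\setminus Y)}$ gives $\mu(\theta(L\setminus Y)) = \mu(L\setminus Y)$.

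For the first summand $L\cap Y$, I would invoke Lemma \ref{lem: blurbsblurbs implies measure zero}: since $L\cap Y$ is a measurable subset of $Y \subset D_{\gl}$ consisting of points with non-periodic orbit, that lemma yields $\mu(L\cap Y) = 0$ and $\mu(\theta(L\cap Y)) = 0$. Combining, $\mu(\theta(L)) = 0 + \mu(L\setminus Y) = \mu(L\cap Y) + \mu(L\setminus Y) = \mu(L)$, which is exactly the claim. This is essentially a two-line argument once the ingredients are in place.

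The only point that requires a little care — and the place I expect to be the main (minor) obstacle — is verifying the identity $\theta(L\setminus Y) = \theta^{(U\setminus Y)}(L\setminus Y)$ and, dually, that $\mu$-invariance of $\theta^{(U\setminus Y)}$ applies to the set $L\setminus Y$, i.e. that $L\setminus Y$ really sits inside the domain of $\theta^{(U\setminus Y)}$. For a point $x\in L\setminus Y$ one has $x\in U$ (we may assume $L\subset U$, or split off the part of $L$ outside $U$ which $\theta$ does not touch), and since $x\notin Y$ we have $x\in U\setminus Y = D_{-1}^{(U\setminus Y)}$, so $\theta^{(U\setminus Y)}(x) = \theta(x)$; this gives the set equality directly. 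One should also check that $L\setminus Y$ is measurable, which follows from measurability of $Y$ (or at least of $X_k$ and the relevant domains) already used in the preceding lemmas. With these routine verifications done, the proof is complete.

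\begin{proof}
    Without loss of generality we may assume $L\subset U$, since $\theta$ acts only on $U$ and points of $L$ outside $U$ contribute nothing to either side. Decompose $L = (L\cap Y)\sqcup(L\setminus Y)$; since $\theta$ is injective, $\theta(L) = \theta(L\cap Y)\sqcup\theta(L\setminus Y)$, hence $\mu(\theta(L)) = \mu(\theta(L\cap Y)) + \mu(\theta(L\setminus Y))$.

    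The set $L\cap Y$ is a measurable subset of $Y\subset D_{\gl}$ all of whose points have non-periodic orbit, so by Lemma \ref{lem: blurbsblurbs implies measure zero} we get $\mu(L\cap Y)=0$ and $\mu(\theta(L\cap Y))=0$.

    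For $L\setminus Y$, every $x$ in this set lies in $U\setminus Y = D_{-1}^{(U\backslash Y)}$, and there $\theta^{(U\backslash Y)}$ coincides with $\theta$; therefore $\theta(L\setminus Y) = \theta^{(U\backslash Y)}(L\setminus Y)$. Since $\mu\in M_{\theta^{(U\backslash Y)}}^1(X)$ we conclude $\mu(\theta(L\setminus Y)) = \mu(L\setminus Y)$.

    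Combining, $\mu(\theta(L)) = 0 + \mu(L\setminus Y) = \mu(L\cap Y) + \mu(L\setminus Y) = \mu(L)$.
\end{proof}
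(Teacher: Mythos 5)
Your proof is correct and is essentially identical to the paper's own argument: the same decomposition $L=(L\cap Y)\sqcup(L\setminus Y)$, the same appeal to Lemma \ref{lem: blurbsblurbs implies measure zero} for the $L\cap Y$ piece, and the same use of the agreement of $\theta$ with $\theta^{(U\backslash Y)}$ off $Y$ for the other piece. The only difference is that you spell out the reduction to $L\subset U$ explicitly, which the paper leaves implicit.
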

\begin{proof}
    For every $\mu\in M_{\theta^{(U\backslash Y)}}^1(X)$ and every measurable $L\subset U$ we have 
    \begin{align*}
        \mu(\theta(L))&=\mu(\theta(L\cap Y))+\mu(\theta(L\backslash Y)).
    \end{align*}
    If no point in $L$ has a periodic orbit then according to Lemma \ref{lem: blurbsblurbs implies measure zero} we have $\mu(\theta(L\cap Y))=0$ and \[\mu(\theta(L\backslash Y))=\mu(\theta^{(U\backslash Y)}(L\backslash Y))=\mu(L\backslash Y)=\mu(L\backslash Y)+\mu(L\cap Y)=\mu(L),\]
    which shows $\mu(\theta(L))=\mu(L)$.
\end{proof}
\begin{proposition}\label{prop:blurbs}
    Let $Y\subset U$ be a closed set. Then the following are equivalent:
    \begin{enumerate}
        \item Taking out $Y$ preserves orbit size. 
        \item  The set $Y$ is contained in $D_{\gl}$ and meets every $\theta$-orbit at most once.
        \item Taking out $Y$ does not change the space of invariant measures, namely $M_{\theta^{(U\backslash Y)}}^1(X)=M_\theta^1(X)$.
    \end{enumerate}
\end{proposition}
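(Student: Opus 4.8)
The plan is to establish $(2)\Rightarrow(1)$, $(2)\Rightarrow(3)$, and $\lnot(2)\Rightarrow\lnot(1)\wedge\lnot(3)$, which together give all three equivalences. The one geometric fact used throughout is that $\theta^{(U\backslash Y)}$ is the restriction of $\theta$ to the open set $U\backslash Y$: hence a $\theta^{(U\backslash Y)}$-orbit is always contained in the corresponding $\theta$-orbit, and passing from $\theta$ to $\theta^{(U\backslash Y)}$ only ``cuts'' orbits at points of $Y$, in the sense that from $y\in Y$ one can no longer move forward and the point $\theta(y)$ can no longer be reached moving backward.

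For $(2)\Rightarrow(1)$ I would fix $x\in X$. If $\orb(x)\cap Y=\emptyset$ then the domain and codomain conditions for $\theta$ and $\theta^{(U\backslash Y)}$ coincide along $\orb(x)$, so $\orb^{U\backslash Y}(x)=\orb(x)$. Otherwise, by (2), $\orb(x)\cap Y=\{y\}$ with $y\in D_{\gl}$, so $\orb(x)=\orb(y)=\{\theta^n(y):n\in\Z\}$; deleting the single edge $y\to\theta(y)$ leaves this orbit connected with the same number of points when it is periodic, and splits it into two countably infinite rays when it is aperiodic, so $|\orb^{U\backslash Y}(x)|=|\orb(x)|$ in either case.

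For $(2)\Rightarrow(3)$: the inclusion $M^1_\theta(X)\subseteq M^1_{\theta^{(U\backslash Y)}}(X)$ holds for every closed $Y$ because $\theta^{(U\backslash Y)}$ is a restriction of $\theta$. For the reverse inclusion, take $\mu\in M^1_{\theta^{(U\backslash Y)}}(X)$ and a measurable $L\subseteq U$, decompose $L$ into its periodic part $\bigsqcup_{l\ge1}(L\cap X^{(l)})$ (the $X^{(l)}$ being measurable by Lemma \ref{lem: points with bounded orbit closed}) and its aperiodic part $L\backslash X_\infty$, and apply Lemma \ref{lem:invariant measures periodic case} to each $L\cap X^{(l)}$ and Lemma \ref{lem:invariant measures nonperiodic case} to $L\backslash X_\infty$; summing yields $\mu(\theta(L))=\mu(L)$, so $\mu\in M^1_\theta(X)$.

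The main work, and the step I expect to be the obstacle, is $\lnot(2)\Rightarrow\lnot(1)\wedge\lnot(3)$. The key claim is that if (2) fails there is a point $z$ whose $\theta^{(U\backslash Y)}$-orbit is a finite path $z_1\to\cdots\to z_r$ — meaning $z_1$ has no $\theta^{(U\backslash Y)}$-preimage in $X$ and $z_r$ has no $\theta^{(U\backslash Y)}$-image — that is a proper subset of $\orb(z_1)$, the missing point being $\theta(z_r)$ (if $z_r\in U$) or $\theta^{-1}(z_1)$ (if $z_1\in V$). Granting this, $\lnot(1)$ holds since $|\orb^{U\backslash Y}(z_1)|=r<|\orb(z_1)|$, and $\lnot(3)$ holds because the uniform measure $\tfrac1r\sum_{i=1}^r\delta_{z_i}$ is $\theta^{(U\backslash Y)}$-invariant (here one uses that $z_1$, having no preimage, lies outside the codomain of $\theta^{(U\backslash Y)}$, which is exactly what invariance of this atomic measure requires) but fails $\theta$-invariance, as witnessed by $\{\theta(z_r)\}$ or $\{z_1\}$. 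To prove the claim I would split according to how (2) fails: if $Y$ meets some orbit at two distinct points $y$ and $\theta^m(y)$ with $m\ge1$ minimal, take $z_1=\theta(y)$ and $r=m$, so $z_r=\theta^m(y)\in Y\subseteq U$; if instead some $y\in Y$ lies outside $D_{\gl}$, then $y\in D_{\fin}\cup D_+\cup D_-$, and the backward $\theta^{(U\backslash Y)}$-orbit of $y$ (if $y\in D_{\fin}\cup D_+$) or the forward $\theta^{(U\backslash Y)}$-orbit of $\theta(y)$ (if $y\in D_-$) is the desired finite path. The remaining bookkeeping — verifying in each case that the listed points constitute the entire $\theta^{(U\backslash Y)}$-orbit and that the missing point really lies outside it — is routine but slightly delicate, relying on the distinctness of points in an aperiodic orbit and on the fact that (in the case $Y\not\subseteq D_{\gl}$, which we may assume is the only way (2) fails) $Y$ meets each orbit at most once.
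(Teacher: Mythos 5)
Your proposal is correct and uses essentially the same ingredients as the paper's proof: the two-case analysis of how (2) can fail (a point of $Y$ outside $D_{\gl}$, or two points of $Y$ on one orbit), the uniform atomic measure on a finite broken orbit as the witness against $\theta$-invariance, and the periodic/aperiodic decomposition via Lemmas \ref{lem:invariant measures periodic case} and \ref{lem:invariant measures nonperiodic case} for $(2)\Rightarrow(3)$. The only difference is organizational: the paper proves the cycle $(3)\Rightarrow(1)\Rightarrow(2)\Rightarrow(3)$, so it never needs your explicit $(2)\Rightarrow(1)$ argument and obtains $\lnot(1)\Rightarrow\lnot(3)$ directly from the existence of a finite proper $\theta^{(U\backslash Y)}$-orbit, without first locating such an orbit through the failure of (2).
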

\begin{proof}
    Assume that taking out $Y$ does not preserve orbit size, that is there exists a point $x$ in $X$ such that \[|\orb^{U\backslash Y}(x)|<|\orb(x)|.\] This necessarily means that $\orb^{U\backslash Y}(x)$ is finite. Hence $M_{\theta^{(U\backslash Y)}}^1(X)$ contains the atomic measure associated to this finite set, which is not contained in $M_{\theta}^1(X)$. Therefore (3) implies (1).
    
    Assume that (2) does not hold. If there is $y\in Y$ which does not lie in $D_{\gl}$, then either $\orb_-(y)$ or $\orb_+(y)$ must be finite. The $\theta^{(U\backslash Y)}$-orbit $\orb^{(U\backslash Y)}(y)$ is contained in $\orb_-(y)$.  Since $y$ lies in $U$ the backward orbit $\orb_-(y)$ is a proper subset of $\orb(y)$. Hence if $\orb_-(y)$ is finite then we have $|\orb^{(U\backslash Y)}(y)|\leq|\orb_-(y)|<|\orb(y)|$ which means that taking out $Y$ does not preserve orbit size. The case that $\orb_+(y)$ is finite can be dealt with similarly. Thus if $Y$ is not contained in $D_{\gl}$ then (1) does not hold. 

    On the other hand, assume that there exist $n\in\Z$ and $y_1\in Y\cap D_{-n}$ such that $\theta^n(y_1)\in Y$ and $\theta^n(y_1)\neq y_1$. Write $y_2\coloneqq\theta^n(y_1)$. We can assume without loss of generality that $n$ is positive, and that $\theta^k(y_1)\notin Y$ for all $1\leq k< n$. Then the $\theta^{(U\backslash Y)}$-orbit of $\theta(y_1)$ is equal to $\{\theta(y_1),...,y_2\}$. By assumption $y_2$ is the only element of $Y$ lying in $\orb^{(U\backslash Y)}(\theta(y_1))$, in particular $y_1\notin\orb^{(U\backslash Y)}(\theta(y_1))$. Hence 
    \[|\orb^{(U\backslash Y)}(\theta(y_1))|=|\{\theta(y_1),...,y_2\}|<|\{y_1,\theta(y_1),...,y_2\}|\leq|\orb(\theta(y_1))|,\] and again this shows that taking out $Y$ does not preserve orbit size. We have shown that (1) implies (2). 

    Now assume that (2) holds. It is clear that $M_{\theta}^1(X)$ is contained in $M_{\theta^{(U\backslash Y)}}^1(X)$. To show the other inclusion, note that we can decompose the space into points with non-periodic orbits and points with periodic orbits of a fixed size, namely
    \[X=(X\backslash X_\infty)\;\dot{\cup}\; X^{(1)}\;\dot{\cup}\; X^{(2)}\;\dot{\cup}\; ...\]
    where each of the sets in the decomposition is measurable and $\theta$-invariant. Given $\mu\in M_{\theta^{(U\backslash Y)}}^1(X)$ we can restrict $\mu$ to each of the sets in the decomposition. It follows from Lemmas \ref{lem:invariant measures periodic case} and \ref{lem:invariant measures nonperiodic case} that the restrictions of $\mu$ are $\theta$-invariant. Since the sets in the decomposition cover the whole space, this shows that $\mu$ is $\theta$-invariant, and hence $M_{\theta^{(U\backslash Y)}}^1(X)$ is contained in $M_{\theta}^1(X)$. Therefore \[M_{\theta^{(U\backslash Y)}}^1(X)=M_{\theta}^1(X)\] and hence (2) implies (3). This concludes the proof.
\end{proof}
\begin{proposition}\label{thm:blabla}
    Assume that the space $X$ is infinite. Let $\theta:U\to V$ be minimal, and $Y\subset U$ be a closed set. Then the following are equivalent:
     \begin{enumerate}
     \item The restricted partial automorphism $\theta^{(U\backslash Y)}$ is minimal.
         \item Taking out $Y$ preserves dense orbits.
         \item Taking out $Y$ preserves orbit size, and $\orb_+(y)$ as well as $\orb_-(y)$ are dense in $X$ for every $y\in Y$.
     \end{enumerate}
     If $X$ is compact then (1), (2) and (3) are furthermore equivalent to:
     \begin{enumerate}
         \item[(4)] Taking out $Y$ preserves orbit size.
          \item[(5)] The set $Y$ is contained in $D_{\gl}$ and meets every $\theta$-orbit at most once. 
          \item[(6)] We have $M_{\theta^{(U\backslash Y)}}^1(X)=M_\theta^1(X)$.
     \end{enumerate}
\end{proposition}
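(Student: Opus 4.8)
The plan is to route everything through Theorem~\ref{thm:minimality partial case} and Proposition~\ref{prop:blurbs}. First, $(1)\Leftrightarrow(2)$ is immediate: since $\theta$ is minimal, every $\theta$-orbit is dense, so $(2)$ merely asserts that $\orb^{U\backslash Y}(x)$ is dense for every $x\in X$, which by Theorem~\ref{thm:minimality partial case} applied to $\theta^{(U\backslash Y)}$ is equivalent to minimality of $\theta^{(U\backslash Y)}$. So the substance is the cycle $(1)\Rightarrow(3)\Rightarrow(1)$ together with, in the compact case, the link to $(4)$--$(6)$. Before starting I would record two consequences of $X$ being infinite and $\theta$ minimal: $D_{\fin}=\emptyset$ and $\theta$ has no periodic orbits (a finite orbit is closed and $\theta$-invariant, hence by minimality equal to $X$, contradicting infiniteness), so every $\theta$-orbit is countably infinite, and likewise every $\theta^{(U\backslash Y)}$-orbit once $\theta^{(U\backslash Y)}$ is known to be minimal; and, if $X$ is moreover compact, $X$ has no isolated points (an isolated point would, $\theta$ being a local homeomorphism, lie on an orbit of isolated points which, being dense, exhausts $X$, making $X$ discrete and compact, hence finite).

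For $(1)\Rightarrow(3)$: assume $\theta^{(U\backslash Y)}$ is minimal. I would first show $Y\subseteq D_{\gl}$ and that $Y$ meets every $\theta$-orbit at most once; by Proposition~\ref{prop:blurbs} this is precisely the first clause of $(3)$. Both facts follow from a single mechanism: if some $y\in Y$ violated them, one exhibits a point with finite $\theta^{(U\backslash Y)}$-orbit, contradicting minimality of $\theta^{(U\backslash Y)}$ on the infinite space $X$. The key local facts are that $y\in Y\subseteq U$ forces $y\notin U\backslash Y$, so $\theta^{(U\backslash Y)}$ is undefined at $y$ and the forward $\theta^{(U\backslash Y)}$-orbit of $y$ is $\{y\}$, and that no $\theta^{(U\backslash Y)}$-orbit can pass through $y$ ``from the left'' since doing so would require $\theta^{-1}(y)\in U\backslash Y$, which fails. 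Hence: if $y\in Y\cap D_+$ then $\orb^{U\backslash Y}(y)\subseteq\{y\}\cup\orb_-(y)$ is finite; if $y\in Y\cap D_-$, or if $y_1,y_2\in Y$ lie on a common $\theta$-orbit with $y_2=\theta^n(y_1)$ for a minimal $n>0$, then the $\theta^{(U\backslash Y)}$-orbit of $\theta(y)$, resp.\ of $\theta(y_1)$, is finite by the same bookkeeping. With $Y\subseteq D_{\gl}$ meeting each orbit at most once, a direct computation (backward iteration of $\theta^{-1}$ from $y\in Y$ stays in $U\backslash Y$, while $\theta$ is undefined at $y$) identifies $\orb^{U\backslash Y}(y)$ with $\orb_-(y)$ and $\orb^{U\backslash Y}(\theta(y))$ with $\orb_+(\theta(y))\subseteq\orb_+(y)$; minimality of $\theta^{(U\backslash Y)}$ then forces $\orb_-(y)$ and hence $\orb_+(y)$ to be dense, which is the second clause of $(3)$.

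For $(3)\Rightarrow(1)$: by Proposition~\ref{prop:blurbs}, $(3)$ is the same as ``$Y\subseteq D_{\gl}$, $Y$ meets each $\theta$-orbit at most once, and $\orb_+(y),\orb_-(y)$ are dense for every $y\in Y$''. I would first note this rules out isolated points of $Y$: if $y\in Y\subseteq D_{\gl}$ were isolated then, $\theta$ being a local homeomorphism, its whole (bi-infinite) orbit would consist of isolated points and, being dense, would exhaust $X$; thus $X$ would be a discrete single orbit, in which the one-sided orbit $\orb_+(y)$ is a proper, hence non-dense, subset --- contradicting $(3)$. Then classify the $\theta^{(U\backslash Y)}$-orbits: a full $\theta$-orbit disjoint from $Y$ is also a $\theta^{(U\backslash Y)}$-orbit, dense since $\theta$ is minimal; a $\theta$-orbit meeting $Y$ at its unique point $y$ splits, under $\theta^{(U\backslash Y)}$, into exactly the two orbits $\orb_-(y)$ and $\orb_+(\theta(y))$, the former dense by $(3)$ and the latter equal to $\orb_+(y)\backslash\{y\}$, still dense because $y$ is not isolated. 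So every $\theta^{(U\backslash Y)}$-orbit is dense, whence $\theta^{(U\backslash Y)}$ is minimal by Theorem~\ref{thm:minimality partial case}.

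Finally the compact case. Proposition~\ref{prop:blurbs} gives $(4)\Leftrightarrow(5)\Leftrightarrow(6)$ with no extra hypotheses, and $(3)\Rightarrow(4)$ is immediate from the wording of $(3)$, so it suffices to prove $(4)\Rightarrow(1)$ for $X$ compact. From $(4)$ and Proposition~\ref{prop:blurbs}, $Y\subseteq D_{\gl}$ and $Y$ meets each $\theta$-orbit at most once; and --- this is the only step where compactness is genuinely used --- Theorem~\ref{thm:minimality partial case} (its conditions $(4)$ and $(5)$ there, applicable since $X$ is compact, infinite and $D_{\fin}=\emptyset$) shows that $\orb_+(x)$ and $\orb_-(x)$ are dense for every $x\in D_{\gl}$, in particular for every $x\in Y$. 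Combined with the absence of isolated points in the compact case, this is exactly what the orbit classification above needs, so $\theta^{(U\backslash Y)}$ is minimal. I expect the main obstacle to be the combinatorics of how a point of $Y$ breaks a $\theta$-orbit --- that it does so in precisely one place --- and dealing with the degenerate possibility of isolated points; the latter is exactly why $(3)$ demands density of $\orb_\pm(y)$ rather than merely of $\orb(y)$.
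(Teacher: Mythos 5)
Your argument is correct in substance and travels the same road as the paper's proof: both reduce everything to Theorem \ref{thm:minimality partial case} and Proposition \ref{prop:blurbs}, and your cycle $(1)\Rightarrow(3)\Rightarrow(1)$ is just a reorganization of the paper's $(2)\Leftrightarrow(3)$ (the paper shows that a $\theta$-orbit meeting $Y$ in the single point $y$ splits under $\theta^{(U\backslash Y)}$ into $\orb_-(y)$ and $\orb_+(y)\backslash\{y\}$, exactly your orbit classification). The compact case is handled identically, via conditions (4) and (5) of Theorem \ref{thm:minimality partial case}. One step of yours is not justified as written: you claim that if $y\in Y$ were isolated, its orbit, ``consisting of isolated points and being dense, would exhaust $X$.'' A dense set of isolated points need not exhaust a space (consider $\{1/n:n\geq 1\}$ inside $\{0\}\cup\{1/n:n\geq 1\}$); the correct reason is minimality of $\theta$: for any $z\in X$ the orbit $\orb(z)$ is dense, hence meets the open set $\{y\}$, so $z\in\orb(y)$. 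With that one-line repair your isolated-point analysis goes through, and it is in fact \emph{more} careful than the paper's, which disposes of the case $y\in\orb_-(x)$ --- where one needs $\orb_+(y)\backslash\{y\}$ rather than $\orb_+(y)$ to be dense --- with an unexamined ``without loss of generality.'' Everything else (the verification that $D_{\fin}=\emptyset$, the absence of periodic orbits, the finiteness arguments in $(1)\Rightarrow(3)$, and the identifications $\orb^{(U\backslash Y)}(y)=\orb_-(y)$ and $\orb^{(U\backslash Y)}(\theta(y))=\orb_+(\theta(y))$) is sound.
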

\begin{proof}
    That (1) and (2) are equivalent is clear from the characterization of minimality in Theorem \ref{thm:minimality partial case}.
    
    Now we show that (3) implies (2). Assume that (3) holds and take $x\in X$. By Proposition \ref{prop:blurbs} we either have $\orb(x)\cap Y=\emptyset$, or $\orb(x)\cap Y=\{y\}$ for some $y\in Y$. In the first case we are done, in the latter case assume without loss of generality that $y\in\orb_+(x)$. Then $\orb^{(U\backslash Y)}(x)=\orb_-(y)$ and $\orb_-(y)$ is by assumption dense in $X$.

    We show that (2) implies (3). For every $y\in Y$ we have that $\orb^{(U\backslash Y)}(y)$ is contained in $\orb_-(y)$, and $\orb^{(U\backslash Y)}(\theta(y))$ is contained in $\orb_+(y)$. Hence if $\orb_-(y)$ is not dense then neither is $\orb^{(U\backslash Y)}(y)$, and the same holds for $\orb_+(y)$ and $\orb^{(U\backslash Y)}(\theta(y))$. In both cases restricting to $Y$ does not preserve dense orbits. 
    
    If taking out $Y$ does not preserve orbit size then we obtain a point whose $\theta^{(U\backslash Y)}$-orbit is finite and strictly smaller then its $\theta$-orbit. If $X$ is infinite then a finite orbit cannot be dense, which shows that taking out $Y$ does not preserve dense orbits. If $X$ is finite and $y\in Y$, then the $\theta^{(U\backslash Y)}$-orbit of $\theta(y)$ does not contain $y$, which means that it is not dense. Note that we have used that there are no periodic orbits since $\theta$ is minimal. This shows that (2) implies (3).
    
   If $X$ is compact then Theorem \ref{thm:minimality partial case} implies that (3) is equivalent to (4). The other equivalences follow from Proposition \ref{prop:blurbs}.
\end{proof}
\subsection{Orbit-breaking subalgebras}
In the following, let $X$ be a locally compact Hausdorff space, $\V$ a vector bundle over $X$, $h:X\to X$ a homeomorphism, and $\E=\Gamma(\V,h)$. Let $Y$ be a closed subset of $X$, and define the partial automorphism $\theta:X\backslash Y\to h(X\backslash Y)$ on $X$ as the restriction of $h$. The algebra $\CP(\E C_0(X\backslash Y))$, meaning the Cuntz--Pimsner algebra associated to the $C^*$-correspondence $\E C_0(X\backslash Y)$ over $C_0(X)$, is called the \emph{orbit-breaking subalgebra of $\CP(\E)$ at $Y$}. It is isomorphic to the Cuntz--Pimsner algebra $\CP(\Gamma_0(\V\vert_{X\backslash Y},\theta))$. 

    The following two propositions, which concern invariant measures and minimality of orbit-breaking subalgebras, generalize Propositions 2.5 and 2.6 from \cite{Lin2010} as well as Corollary 6.15 from \cite{adamo2023}. Recall from Definition \ref{def:restricted partial auto} that $Y$ is said to meet every $h$-orbit at most once if $h^n(y)\cap Y\subset\{y\}$ for all $n\in\Z\backslash\{0\}$ and $y\in Y$. If $h$ is free, then this condition becomes $h^n(Y)\cap Y=\emptyset$ for all $n\in\Z\backslash\{0\}$, which is how it was stated in \cite{adamo2023}. 

Assume that $X$ is compact, $\theta$ is free, and that $\V$ is a line bundle. Then there is an affine continuous injective map from $T(\CP(\E))$ to $T(\CP(\E C_0(X\backslash Y)))$ obtained via the sequence 
\begin{align}\label{eq:restriction map}
    T(\CP(\E))\to M^1_{h}(X)\hookrightarrow M^1_{\theta}(X)\to  T(\CP(\E C_0(X\backslash Y))),
\end{align}
where the first and the last arrow are the affine homeomorphisms from \cite[Proposition 4.23]{partI}, and the arrow in the middle is the inclusion map. 
\begin{proposition}\label{prop:orbit breaking preserves traces}
    Assume that $X$ is compact, $\theta$ is free, and that $\V$ is a line bundle. Then the affine continuous map from $T(\CP(\E))$ to $T(\CP(\E C_0(X\backslash Y)))$ obtained by the sequence (\ref{eq:restriction map}) is a homeomorphism if and only if $Y$ meets every $h$-orbit at most once. 
\end{proposition}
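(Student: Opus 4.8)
The plan is to peel off the two outer arrows of (\ref{eq:restriction map}), which are affine homeomorphisms, and reduce everything to a property of the middle arrow, which is then settled by Proposition \ref{prop:blurbs}. Write $\theta = h^{(X\backslash Y)}$ for the restricted partial automorphism, so that the target of (\ref{eq:restriction map}) is $T(\CP(\Gamma_0(\V|_{X\backslash Y},\theta)))$. First I would note that every $h$-invariant probability measure is automatically $\theta$-invariant, since $\theta$ is just a restriction of $h$; thus the inclusion $M^1_h(X)\hookrightarrow M^1_\theta(X)$ is genuinely the inclusion of one subset of $M^1(X)$ into another, and in particular a homeomorphism onto its image for the weak* topologies. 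Since the first and last arrows of (\ref{eq:restriction map}) are affine homeomorphisms by \cite[Proposition 4.23]{partI} (this is where $X$ compact, $\theta$ free and $\V$ a line bundle are used), the composite in (\ref{eq:restriction map}) is an affine homeomorphism from $T(\CP(\E))$ onto the image of $M^1_h(X)$ under the last arrow. As the last arrow is a bijection of $M^1_\theta(X)$ onto $T(\CP(\E C_0(X\backslash Y)))$, the composite is surjective --- hence a homeomorphism onto all of $T(\CP(\E C_0(X\backslash Y)))$ --- if and only if $M^1_h(X) = M^1_\theta(X)$.

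It then remains to characterize when $M^1_h(X) = M^1_\theta(X)$, and this is precisely the content of Proposition \ref{prop:blurbs}, applied with the global homeomorphism $h$ playing the role of the partial automorphism ``$\theta$'' there (so that $U = X$) and with $\theta^{(U\backslash Y)} = \theta$. Condition (3) of that proposition reads $M^1_\theta(X) = M^1_h(X)$, condition (2) reads ``$Y \subseteq D_{\gl}$ and $Y$ meets every $h$-orbit at most once'', and, since $h$ is a homeomorphism of $X$, one has $D_{\gl} = X$, so the first clause of (2) is vacuous. Hence the equivalence (2) $\Leftrightarrow$ (3) gives $M^1_h(X) = M^1_\theta(X)$ if and only if $Y$ meets every $h$-orbit at most once, which combined with the previous paragraph completes the proof.

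All the genuine work is thus already carried out in Proposition \ref{prop:blurbs}; the present argument is essentially bookkeeping, and the only point requiring a little care is to set up the reduction so that surjectivity of the composite in (\ref{eq:restriction map}) is exactly equivalent to the equality $M^1_h(X) = M^1_\theta(X)$, and not merely to some one-sided containment --- which is why it is worth recording explicitly that the middle arrow is the honest inclusion map and that $M^1_h(X) \subseteq M^1_\theta(X)$ holds with no hypotheses on $h$, $\theta$ or $Y$.
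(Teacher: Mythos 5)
Your proposal is correct and follows essentially the same route as the paper's (very terse) proof: invoke \cite[Proposition 4.23]{partI} for the outer arrows of (\ref{eq:restriction map}) and reduce the question to the equality $M^1_h(X)=M^1_\theta(X)$, which is settled by Proposition \ref{prop:blurbs}. Your write-up merely makes explicit the bookkeeping the paper leaves implicit, in particular that $D_{\gl}=X$ for the global homeomorphism $h$, so that condition (2) of Proposition \ref{prop:blurbs} collapses to ``$Y$ meets every $h$-orbit at most once.''
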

\begin{proof}
    The third and the first map of (\ref{eq:restriction map}) are homeomorphisms by \cite[Proposition 4.23]{partI}. The assertion now follows from Proposition \ref{prop:blurbs}.
\end{proof}

\begin{proposition}\label{prop:orbit-breaking simplicity}
     Assume that $X$ is infinite and $h$ is minimal. Then the orbit-breaking subalgebra $\CP(\E C_0(X\backslash Y))$ is simple if and only if $Y$ meets every $h$-orbit at most once. 
\end{proposition}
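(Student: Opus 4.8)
The plan is to combine the simplicity criterion for Cuntz--Pimsner algebras of partial automorphisms from \cite{partI} with Proposition \ref{thm:blabla}. As recalled above, the orbit-breaking subalgebra $\CP(\E C_0(X\backslash Y))$ is isomorphic to $\CP(\F)$, where $\F\coloneqq\Gamma_0(\V|_{X\backslash Y},\theta)$ and $\theta$ is the restriction of $h$ to the open set $X\backslash Y$. Since $X$ is infinite and $h$ is minimal, $h$ is free (a finite orbit would be a nonempty proper closed invariant subset, contradicting minimality, exactly as in the proof of Theorem \ref{thm:classifiable}), and hence $\theta$ is free as well, since $\theta^n(x)=x$ forces $h^n(x)=x$.

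The next step is to show that $\CP(\F)$ is simple if and only if $\theta$ is minimal. The implication from minimality to simplicity is \cite[Corollary 4.17]{partI}, using the freeness just observed. For the converse, suppose $\theta$ is not minimal and pick a closed $\theta$-invariant set $W$ with $\emptyset\neq W\neq X$. Applying \cite[Proposition 4.11]{partI} to $\theta$ and the bundle $\V|_{X\backslash Y}$ yields the short exact sequence
\[0\to\CP(\F_{X\backslash W})\to\CP(\F)\to\CP(\F_{W})\to 0,\]
where $\F_{X\backslash W}$ and $\F_{W}$ are the restricted correspondences of Definition \ref{def:restriction}. The ideal $\CP(\F_{X\backslash W})$ is nonzero because $X\backslash W$ is nonempty, and it is proper because the quotient $\CP(\F_{W})$ is nonzero, $W$ being nonempty; hence $\CP(\F)$ is not simple.

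It then remains to identify when $\theta$ is minimal, and here I would invoke Proposition \ref{thm:blabla}. Since $h$ is a homeomorphism it is a global partial automorphism, so $D_{\gl}=X$ and in particular $Y\subseteq D_{\gl}$ automatically; thus condition (5) of Proposition \ref{thm:blabla}, applied with $h$ in the role of the partial automorphism there and with $\theta$ the restriction of $h$ to $X\backslash Y$, reduces precisely to the statement that $Y$ meets every $h$-orbit at most once. Proposition \ref{thm:blabla} then gives that $\theta$ is minimal if and only if $Y$ meets every $h$-orbit at most once, which together with the previous paragraph completes the argument.

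The only substantial input is Proposition \ref{thm:blabla}; the remaining steps are bookkeeping around the constructions of \cite{partI}, so I expect the equivalence (1)$\Leftrightarrow$(5) of that proposition to be the main obstacle hidden behind this statement---in particular the part showing that preserving orbit size forces $\orb_+(y)$ and $\orb_-(y)$ to be dense for every $y\in Y$. One should also double-check that the identification $\CP(\E C_0(X\backslash Y))\cong\CP(\F)$ and the application of \cite[Proposition 4.11]{partI} go through for a vector bundle of possibly non-constant rank, but this is routine.
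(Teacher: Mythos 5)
Your proof follows essentially the same route as the paper's: freeness of $\theta$ from minimality of $h$ on an infinite space, the equivalence of simplicity with minimality of $\theta$ via \cite[Corollary 4.17]{partI}, and then Proposition \ref{thm:blabla} (condition (5), with $D_{\gl}=X$ automatic for a global action) to translate minimality into the orbit-meeting condition. Your extra paragraph deriving ``not minimal $\Rightarrow$ not simple'' from the short exact sequence of \cite[Proposition 4.11]{partI} is harmless but redundant, since the cited corollary already gives the equivalence in both directions for free actions.
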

\begin{proof}
    Since $h$ is minimal and $X$ is infinite, $h$ is free. Then $\theta$ is free as well, because fixed points of $\theta$ are also fixed points of $h$. Hence by \cite[Corollary 4.17]{partI}, the Cuntz--Pimsner algebra $\CP(\E C_0(X\backslash Y))$ is simple if and only if the partial automorphism $\theta$ is minimal. On the other hand, Proposition \ref{thm:blabla} yields that $\theta$ is minimal if and only if $Y$ meets every $h$-orbit at most once. This finishes the proof.
\end{proof}

In the next proposition we will use the notion of a \emph{recursive subhomogeneous} $C^*$-algebra, see \cite{phillips:2007}. The proposition generalizes \cite[Theorem 2.4]{Lin2010} as well as \cite[Theorem 8.8]{ForoughJeongStrung:2025}, except for the additional assumption $\dim(X)<\infty$. This additional assumption is the price we pay for not explicitly constructing the recursive subhomogeneous decomposition.
\begin{proposition}
        Assume that $Y$ has nonempty interior, that $X$ is compact, second countable and has finite covering dimension, and that $h$ is minimal. Then the Cuntz--Pimsner algebra $\CP(\E C_0(X\backslash Y))$ is recursive subhomogeneous with decomposition rank bounded by $\dim(X)$.
\end{proposition}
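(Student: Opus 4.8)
The plan is to reduce to Theorem~\ref{thm:finitely supported domains rsh} and then promote ``subhomogeneous'' to ``recursive subhomogeneous'' by abstract structure theory. Write $\theta\colon X\setminus Y\to h(X\setminus Y)$ for the restriction of $h$ and set $\E'\coloneqq\Gamma_0(\V|_{X\setminus Y},\theta)$, so that $\CP(\E C_0(X\setminus Y))\cong\CP(\E')$ as recalled above. Here $D_{-1}=X\setminus Y$, and a straightforward induction (using that $\theta$ agrees with $h$ on its domain) gives, for $n\geq1$,
\[
D_{-n}=\{x\in X:h^{j}(x)\notin Y\ \text{for all}\ j=0,\dots,n-1\},\qquad D_{n}=\theta^{n}(D_{-n})=h^{n}(D_{-n}).
\]

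First I would check that $\theta$ has finitely supported domains, i.e.\ $D_{N}=D_{-N}=\emptyset$ for some $N\in\N$. Put $W\coloneqq\interior(Y)$, which is nonempty by hypothesis. Since $h$ is minimal, every forward orbit is dense in $X$: if $F\coloneqq\overline{\orb_{+}(x)}$ then $h(F)\subseteq F$, so $\bigcap_{n\geq0}h^{n}(F)$ is a nonempty closed $h$-invariant set and hence equals $X$, forcing $F=X$. Consequently $\{h^{-k}(W)\}_{k\geq0}$ is an open cover of the compact space $X$, so $X=\bigcup_{k=0}^{N-1}h^{-k}(W)$ for some $N$. As $W\subseteq Y$, every $x\in X$ then has $h^{j}(x)\in Y$ for some $j\in\{0,\dots,N-1\}$, i.e.\ $D_{-N}=\emptyset$, and therefore $D_{N}=h^{N}(D_{-N})=\emptyset$ as well.

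Next I would invoke Theorem~\ref{thm:finitely supported domains rsh}: $X$ is compact, second countable and Hausdorff, $\theta\colon D_{-1}\to D_{1}$ is a partial automorphism with $D_{N}=D_{-N}=\emptyset$, and $\V|_{X\setminus Y}$ is a vector bundle over $D_{-1}$, so $\CP(\E')$ is subhomogeneous with $\dr(\CP(\E'))\leq\dim(X)$. Moreover $\CP(\E')$ is unital, since $X$ is compact and hence $\pi_{u}(1_{C(X)})$ is a unit, and separable, since $C(X)$ is separable and $\Gamma_{0}(\V|_{X\setminus Y})$ is countably generated. Finally, a separable unital subhomogeneous $C^{*}$-algebra of finite decomposition rank (equivalently, with finite-dimensional primitive ideal space) is recursive subhomogeneous by the structure theory of such algebras (see \cite{Winter2004}; cf.\ \cite{phillips:2007}). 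Hence $\CP(\E')$, and therefore the orbit-breaking subalgebra $\CP(\E C_0(X\setminus Y))$, is recursive subhomogeneous with decomposition rank at most $\dim(X)$.

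The main obstacle is that we never produce an explicit recursive subhomogeneous decomposition, so the conclusion rests entirely on the abstract characterisation of recursive subhomogeneity within the class of subhomogeneous $C^{*}$-algebras; this is exactly where the hypothesis $\dim(X)<\infty$ enters, both to make the decomposition rank finite and to obtain the recursive subhomogeneous structure. The only genuinely new computation is the finite support of the domains of $\theta$, which is immediate from minimality of $h$ together with compactness of $X$; everything else is an application of Theorem~\ref{thm:finitely supported domains rsh} and known structure results.
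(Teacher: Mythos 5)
Your proposal is correct and follows essentially the same route as the paper: establish that the restricted partial automorphism has finitely supported domains via a maximal-return-time argument (which you prove directly from minimality, compactness, and $\interior(Y)\neq\emptyset$, where the paper simply cites \cite[Theorem 2.3]{Lin2010}), then apply Theorem \ref{thm:finitely supported domains rsh} together with the Winter--Phillips structure theory for subhomogeneous algebras of finite decomposition rank. The only difference is that you supply the elementary covering argument for the return-time bound in-line rather than by citation.
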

\begin{proof}
     There is a maximal return time for points in $Y$ (see Theorem 2.3 in \cite{Lin2010}). We obtain $D_N=\emptyset$ for the domains of $\theta$, where $N\in\N$ is chosen big enough. Now combine Theorem \ref{thm:finitely supported domains rsh} with the main result of \cite{Winter2004}.
\end{proof}

Finally we arrive at a generalization of Theorem 6.18 in \cite{adamo2023}:
\begin{theorem}\label{thm:orbit breaking classifiable}
    Assume that $X$ is infinite, compact and second countable, and that $h$ is minimal. If $Y$ meets every $h$-orbit at most once and $X$ has finite covering dimension, then the orbit-breaking subalgebra $\CP(\E C_0(X\backslash Y))$ is classifiable and stably finite.
\end{theorem}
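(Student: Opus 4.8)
By construction the orbit-breaking subalgebra $\CP(\E C_0(X\backslash Y))$ is isomorphic to $\CP(\Gamma_0(\V|_{X\backslash Y},\theta))$, where $\theta=\theta^{(X\backslash Y)}$ is the partial automorphism on $X$ with domain $U\coloneqq X\backslash Y$ obtained by restricting $h$. The entire proof will consist of verifying that this $\theta$ satisfies the hypotheses of Theorem \ref{thm:classifiable} and then reading off both conclusions from it. The data to check are: $X$ infinite, compact, second countable Hausdorff with finite covering dimension (given by assumption); $U\subsetneq X$; and $\theta$ minimal.

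Since an orbit-breaking set $Y$ is nonempty (and the stable finiteness asserted below forces this in any case), we have $U=X\backslash Y\subsetneq X$. For minimality of $\theta$ I would invoke Proposition \ref{thm:blabla} with the \emph{global} homeomorphism $h$ in the role of the partial automorphism there: its domain is all of $X$, so $D_{\gl}=X$, and the restriction $h^{(X\backslash Y)}$ is precisely $\theta$. Under this identification, condition (5) of that proposition says exactly that $Y$ meets every $h$-orbit at most once, which is our hypothesis; since $X$ is compact, Proposition \ref{thm:blabla} then yields condition (1), namely that $\theta$ is minimal.

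Theorem \ref{thm:classifiable} now applies to $\theta$ and $\E'\coloneqq\Gamma_0(\V|_{X\backslash Y},\theta)$, giving that $\CP(\E')\cong\CP(\E C_0(X\backslash Y))$ is classifiable, with nuclear dimension at most one. For stable finiteness: if $\V$ is a line bundle this is again part of Theorem \ref{thm:classifiable}; otherwise it suffices to exhibit a point of the set $D_-=\left(\bigcap_{n>0}D_n\right)\backslash\left(\bigcap_{n\in\Z}D_n\right)$ associated to $\theta$. Unwinding the domains of $\theta=h|_{X\backslash Y}$ gives $D_n=\{z\in X:h^{-i}(z)\notin Y\text{ for }1\le i\le n\}$, hence $\bigcap_{n>0}D_n=\{z:h^{-i}(z)\notin Y\ \forall i\ge1\}$ and $\bigcap_{n\in\Z}D_n=\{z:h^{k}(z)\notin Y\ \forall k\in\Z\}$. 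Fix $y\in Y$. Minimality of $h$ on the infinite space $X$ forces $h$ to be free, and since $Y$ meets $\orb(y)$ at most once we get $h^{-i}(y)\notin Y$ for all $i\ge1$, so $y\in\bigcap_{n>0}D_n$; but $h^{0}(y)=y\in Y$, so $y\notin\bigcap_{n\in\Z}D_n$. Therefore $y\in D_-$, in particular $D_-\neq\emptyset$, and Theorem \ref{thm:classifiable} gives that $\CP(\E C_0(X\backslash Y))$ is stably finite.

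Essentially all of the analytic content needed here — the nuclear-dimension estimates of Section \ref{sect:classifiability of the Cuntz--Pimsner algebra}, the identification of traces with conformal measures, and the minimality dictionary of Proposition \ref{thm:blabla} — has already been established, so I do not expect a genuine obstacle; the argument is assembly. The only steps requiring care are invoking Proposition \ref{thm:blabla} with $h$ (not $\theta$) as the ambient automorphism so that $D_{\gl}=X$, and the bookkeeping showing $Y\subseteq D_-$ for the restricted partial automorphism, which is what rules out pure infiniteness.
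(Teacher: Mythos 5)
Your proposal is correct and follows essentially the same route as the paper: minimality of the restricted partial automorphism via Proposition \ref{thm:blabla} (applied to the global $h$, where $D_{\gl}=X$), nonemptiness of $D_-$, and then Theorem \ref{thm:classifiable}. The only difference is that you spell out the computation showing $Y\subseteq D_-$, which the paper merely asserts in one sentence; that added detail is accurate and harmless.
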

\begin{proof}
    For a partial automorphism obtained from breaking the orbits of a global homeomorphim, the set $D_-$ is always nonempty. The statement now follows from combining Theorem \ref{thm:classifiable} and Proposition \ref{thm:blabla}.
\end{proof}
\begin{remark}
If $\V$ is not a line bundle, then the Cuntz--Pimsner algebra $\CP(\Gamma(\V,h))$ is purely infinite by \cite[Corollary 5.6]{adamo2023}. It is therefore quite remarkable that the orbit-breaking subalgebra $\CP(\Gamma_0(\V\vert_{X\backslash Y},\theta))$ in Theorem \ref{thm:orbit breaking classifiable} is stably finite. It means that $\CP(\Gamma_0(\V\vert_{X\backslash Y},\theta))$ cannot be a large subalgebra inside $\CP(\Gamma(\V,h))$ in the sense of Phillips \cite{phillips:2014}. It also implies that Proposition \ref{prop:orbit breaking preserves traces} cannot be generalized to higher-rank bundles. If $\V$ is not a line bundle, then breaking the orbit will create new tracial states on the Cuntz--Pimsner algebra. 
\end{remark}

\printbibliography


\end{document}